\newcommand{\bN}{\mathbb{N}}
\newcommand{\bC}{\mathbb{C}}
\newcommand{\bR}{\mathbb{R}}
\newcommand{\bD}{\mathbb{D}}
\newcommand{\cD}{\mathcal{D}}
\newcommand{\Herm}{\mathrm{He}}
\DeclareMathOperator*{\argmax}{arg\,max}
\DeclareMathOperator*{\dist}{dist}
\newcommand{\E}{\mathbb E}
\newcommand{\R}{\mathbb{R}}
\newcommand{\N}{\mathbb{N}}
\newcommand{\C}{\mathbb{C}}
\newcommand{\Z}{\mathbb{Z}}
\renewcommand{\P}{\mathbb{P}}
\renewcommand{\Re}{\operatorname{Re}}
\renewcommand{\Im}{\operatorname{Im}}
\newcommand{\Var}{\mathop{\mathrm{Var}}\nolimits}
\newcommand{\eps}{\varepsilon}
\newcommand{\toas}{\overset{a.s.}{\underset{n\to\infty}\longrightarrow}}
\newcommand{\ton}{\overset{}{\underset{n\to\infty}\longrightarrow}}
\newcommand{\ind}{\mathbbm{1}}
\newcommand{\dd}{{\rm d}}
\newcommand{\eee}{{\rm e}}
\theoremstyle{plain}
\newtheorem{theorem}{Theorem}[section]
\newtheorem{lemma}[theorem]{Lemma}
\newtheorem{corollary}[theorem]{Corollary}
\newtheorem{proposition}[theorem]{Proposition}
\theoremstyle{definition}
\newtheorem{example}[theorem]{Example}
\theoremstyle{remark}
\newtheorem{remark}[theorem]{Remark}
\begin{document}

\author{Rudolf Gr\"ubel}
\address{Rudolf Gr\"ubel, Institut f\"ur Mathematische Stochastik,
Leibniz Universit\"at Hannover,
Welfengarten 1,
30167 Hannover, Germany
}
\email{rgrubel@stochastik.uni-hannover.de}

\author{Zakhar Kabluchko}
\address{Zakhar Kabluchko, Institut f\"ur Mathematische Statistik,
Universit\"at M\"unster,
Orl\'eans--Ring 10,
48149 M\"unster, Germany}
\email{zakhar.kabluchko@uni-muenster.de}

\title[Edgeworth expansions for profiles of lattice BRW's]{Edgeworth expansions for profiles of lattice branching random walks}
\keywords{Branching random walk, Edgeworth expansion, central limit theorem, profile, Biggins martingale, random analytic function, mod-$\varphi$-convergence, height, mode}

\subjclass[2010]{Primary, 60G50; secondary, 60F05, 60J80, 60F10, 60F15}
\thanks{}
\begin{abstract}
Consider a branching random walk on $\Z$ in discrete time. Denote by $L_n(k)$ the number of particles at site $k\in\Z$ at time $n\in\N_0$.
By the \textit{profile} of the branching random walk (at time $n$) we mean the function
$k\mapsto L_n(k)$. We establish the following asymptotic expansion of $L_n(k)$, as $n\to\infty$:
$$
\eee^{-\varphi(0)n} L_n(k) = \frac{\eee^{-\frac 12 x_n^2(k)}}{\sqrt {2\pi \varphi''(0) n}}
                              \sum_{j=0}^r \frac{F_j(x_n(k))}{n^{j/2}} + o\left(n^{-\frac{r+1}{2}}\right) \text{ a.s.},
$$
where $r\in\N_0$ is arbitrary,  $\varphi(\beta)=\log \sum_{k\in\Z} \eee^{\beta k}  \E L_1(k)$ is the cumulant generating function of the intensity of the branching random walk and
$$
    x_n(k) = \frac{k-\varphi'(0) n}{\sqrt{\varphi''(0)n}}. 
$$
The expansion is valid uniformly in $k\in\Z$ with probability $1$ and the $F_j$'s are polynomials whose random
coefficients can be expressed through the derivatives of $\varphi$ and the derivatives of the limit of the Biggins
martingale at $0$. Using exponential tilting, we
also establish
more general expansions covering the whole range of the branching random walk except its extreme values. As an
application of this expansion for $r=0,1,2$ we recover in a unified way a number of known results and establish
several
new limit theorems.  In particular, we study the a.s.\ behavior of the individual occupation numbers $L_n(k_n)$, where $k_n\in\Z$ depends on $n$ in some regular way. We also prove a.s.\ limit theorems for the mode $\argmax_{k\in\Z} L_n(k)$ and the height $\max_{k\in\Z} L_n(k)$ of the profile. The asymptotic behavior of these quantities depends on whether the drift parameter $\varphi'(0)$ is integer, non-integer rational, or irrational. Applications of our results to profiles of random trees including binary search trees and random recursive trees will be given in a separate paper.

\end{abstract}

\maketitle

\section{Introduction}

\subsection{Statement of the problem}\label{subsec:statement_problem}
The \textit{branching random walk} (BRW) is a model that combines random spatial motion of particles with branching; see~\cite{biggins_branching_out} for a historical overview. In this paper, we restrict our attention to the BRW on the \textit{integer lattice $\Z$}.  The model is defined as follows.
At time $0$ consider a single ancestor particle located at $0$. At any time $n\in\N_0$ every particle is replaced (independently of all other particles and of the past of the process) by a random finite cluster of descendant particles whose displacements w.r.t.\ the original particle are distributed according to some fixed point process $\zeta$ on $\Z$. The number of particles in $\zeta$ may be random. The positions of the particles in $\zeta$ need not be independent random variables (nor need they be independent of the number of particles).

Denote by $N_n$ the number of particles at time $n\in \N_0$. Then $\{N_n\colon n\in\N_0\}$ is a Galton--Watson branching process. We will always assume that this process is supercritical (meaning that $m:=\E N_1>1$) and with probability $1$ never dies out (meaning that $N_1\neq 0$ a.s.) The latter assumption could be removed, but then all results hold conditionally on non-extinction. Denote the positions of the particles in the BRW at time $n$ by
$$
z_{1,n}\leq \ldots \leq z_{N_n,n}.
$$
Our main object of interest is the \textit{occupation number} $L_n(k)$ defined as the number of particles located at site $k\in\Z$ at time $n\in\N_0$:
\begin{equation}\label{eq:L_T_k_def_original}
L_n(k) =  \#\{1\leq j\leq N_n \colon z_{j,n} = k\}.
\end{equation}
The random function $L_n:\Z\to \N_0$ will be referred to as the \textit{profile} of the branching random walk at time $n$. The \textit{intensity} of the branching random walk at time $n$ is the measure $\vartheta_n$ on $\Z$ defined as the expectation of the profile:
\begin{equation}\label{eq:vartheta}
\vartheta_n(\{k\}) = \E L_n(k), \quad k\in\Z.
\end{equation}

We will study the asymptotic shape of the profile $L_n$ as $n\to\infty$. More concretely, we will obtain  an asymptotic expansion of $L_n$ in powers of $n^{-1/2}$ which is similar to the classical \textit{Chebyshev--Edgeworth--Cram\'er expansion} for sums of independent identically distributed random variables. The latter will be recalled in Section~\ref{subsec:edgeworth_iid}.

It follows from the definition of the branching random walk that the intensity measure $\vartheta_n$ is the $n$-fold convolution of $\vartheta_1$ (so that, in particular, $\vartheta_n(\Z)=m^n$).
Throughout the history of the BRW this fact has been used to relate the asymptotic properties of the random measures represented by $L_n$, as $n\to\infty$, to the analogous classical properties of convolutions of probability measures. In connection with the central limit theorem, this leads to the Harris conjecture~\cite[Chapter III, \S 16]{harris_book} which was proved (in various forms and for various models) in~\cite{stam,joffe_moncayo,asmussen_kaplan1,asmussen_kaplan2,biggins_CLT,uchiyama,biggins_uniform,yoshida}.  The work of~\citet{biggins_chernov,biggins_growth_rates,biggins_uniform} refers to this analogy in connection with large deviation principles and local limit theorems.

The present work adds one more example to this list by proving an asymptotic expansion of the profile. Since the asymptotic expansion contains many other classical limit theorems as consequences, we will be able to recover many of the above mentioned results in a unified way (though under sub-optimal moment conditions).
Our asymptotic expansion of the profile $L_n$ will be stated in Theorem~\ref{theo:asympt_expansion_BRW}. For comparison, an asymptotic expansion of the intensity measure $\vartheta_n$ will be given in Proposition~\ref{prop:expansion_expected}. It turns out that the two expansions do not coincide: while the expansion of the intensity $\vartheta_n$ is deterministic, the expansion of the profile $L_n$ contains random terms which can be expressed through the derivatives of a remarkable random analytic function given as the limit of the Biggins martingale; see below.

Our motivation for investigating such asymptotic expansions was to develop a method to study binary search trees, random recursive trees, and some similar types of random trees that appear in the analysis of algorithms. An important characteristic of these trees is their profile, that is the random function $k\mapsto L_n(k)$ counting the number of nodes at a given level $k$, for a tree with $n$ nodes. The probabilistic properties of these profiles have been much studied; see, e.g.,\ \cite{drmota_book,drmota_hwang,drmota_janson_neininger,chauvin_etal}. Since random trees can be embedded into continuous-time branching random walks, see~\citet{chauvin_etal} and~\citet{biggins_grey}, it is possible to translate our asymptotic expansion into the setting of random trees.  In Sections~\ref{subsec:xia_chen} and~\ref{subsec:width_mode}, we will answer the BRW analogues of several open questions on the profiles of random trees. For example, we will prove limit theorems on the height and the mode of the branching random walk.
While we plan to give applications to random trees in a separate paper, we will point out connections to the existing literature on random trees here.

The paper is organized as follows.
In Section~\ref{subsec:edgeworth_iid} we recall the classical asymptotic expansion for sums of i.i.d.\ random variables. In Section~\ref{subsec:BRW_def_assumpt} we fix the notation and state our assumptions on the BRW. In Section~\ref{subsec:simulations} we briefly comment on our simulations.
In Section~\ref{sec:results} we state the asymptotic expansion and its numerous consequences. Proofs are given in Section~\ref{sec:proofs}.

\subsection{Asymptotic expansion for sums of i.i.d.\ random variables}\label{subsec:edgeworth_iid}
Let $Z_1,Z_2,\ldots$ be independent identically distributed (i.i.d.)\ random variables with $\E Z_1=\mu$ and $\Var Z_1 = \sigma^2 > 0$. Define the sequence of their partial sums:
$$
S_n=Z_1+\ldots+Z_n.
$$
We assume that $Z_1$ takes only integer values and that the lattice span of the distribution of $Z_1$ is $1$, that is there is no pair $h\in \{2,3,\ldots\}$, $a\in\Z$ such that all possible values of $Z_1$ are contained in the arithmetic progression $a+ h\Z$.

Under these assumptions, the \textit{local limit theorem} (see, e.g.\ Theorem~1 in~\cite[Ch.~VII, p.~187]{petrov_book}) states that
\begin{equation}
\lim_{n\to\infty} \sqrt n \sup_{k\in\Z} \left|\P[S_n=k] - \frac{1}{\sqrt {2\pi n}\, \sigma} \eee^{-\frac 12 x_n^2(k)} \right| = 0,
\end{equation}
where
\begin{equation}
x_n(k)=\frac{k- n\mu}{\sigma \sqrt n},\quad k\in\Z.
\end{equation}

Under additional moment conditions, there is a complete asymptotic expansion of $\P[S_n=k]$ in powers of $n^{-1/2}$, the classical Chebyshev--Edgeworth--Cram\'er expansion. Various versions of this expansion have been much studied; see the monographs by~\citet{petrov_book}, \citet{bhattacharya_ranga_rao_book}, \citet{hall_book}. To state the expansion relevant to us, suppose additionally that $\E |Z_1|^{r+2}<\infty$ for some $r\in\N_0$. The logarithm of the characteristic function of $Z_1$ can then be written in the form
$$
\log \E [\eee^{is Z_1}] = \sum_{j=1}^{r+2} \kappa_j \frac{(is)^j}{j!} + o(|s|^{r+2}), \text{ as } s\to 0,
$$
where the numbers $\kappa_j$ are the \textit{cumulants} of $Z_1$. Note that $\kappa_1=\mu$ and $\kappa_2=\sigma^2$. The Chebyshev--Edgeworth--Cram\'er asymptotic expansion reads as follows, see Theorem 13 in~\citet[Ch.~VII, p.~205]{petrov_book}:
\begin{equation}\label{eq:asympt_expansion_random_walk}
\lim_{n\to\infty} n^{\frac {r+1}2} \sup_{k\in\N} \left|\P[S_n=k] - \frac 1 {\sqrt {2\pi n}\,\sigma} \eee^{-\frac 12 x^2_n(k)}
\sum_{j=0}^{r}
\frac 1 {n^{j/2}}q_j(x_n(k))\right|=0,
\end{equation}
where $q_j(x)$ is a degree $3j$ polynomial in $x=x_n(k)$ whose coefficients can be expressed through the cumulants $\kappa_2,\ldots,\kappa_{j+2}$.
The first three terms in the expansion are given by
\begin{align}
q_0(x) = 1,
\quad
q_1(x) = \frac{\kappa_3}{6\sigma^3} \Herm_3(x),
\quad
q_2(x) = \frac{\kappa_4}{24 \sigma^4} \Herm_4(x) + \frac{\kappa_3^2}{72 \sigma^6} \Herm_6(x),
\end{align}
where  $\Herm_n(x)$ denotes the $n$-th ``probabilist'' \textit{Hermite polynomial}:
$$
\Herm_n(x)= \eee^{\frac 12 x^2} \left(-\frac{\dd}{\dd x}\right)^n \eee^{-\frac 12 x^2}.
$$
The first few Hermite polynomials relevant to us are
\begin{align}
&\Herm_1(x)= x,\quad
\Herm_2(x)= x^2-1, \quad
\Herm_3(x) = x^3-3x,\label{eq:Herm1}\\
&\Herm_4(x)= x^4-6x^2+3, \quad
\Herm_6(x)= x^6 - 15 x^4 + 45 x^2 - 15. \label{eq:Herm2}
\end{align}
The aim of the present work is to obtain asymptotic expansions of this type for the profiles of branching random walks.  

\subsection{Assumptions on the branching random walk}\label{subsec:BRW_def_assumpt}
Consider a branching random walk on the integer lattice $\Z$, as defined in Section~\ref{subsec:statement_problem}.
We will use the following standing assumptions.
Recall that $\vartheta_1$ denotes the intensity measure of the BRW at time $n=1$.

\vspace*{2mm}
\noindent
\textsc{Assumption A:} The branching random walk is non-degenerate, that is the set $\{k\in\Z\colon \vartheta_1(\{k\})>0\}$ contains at least two elements.
\vspace*{2mm}

Recall that $N_n$ denotes the number of particles in the BRW at time $n$.

\vspace*{2mm}
\noindent
\textsc{Assumption B:} $N_1\neq 0$ a.s.\ and $m := \E N_1 \in (1,\infty)$.
\vspace*{2mm}

Assumption~A simply excludes a trivial case. As we already mentioned in Section~\ref{subsec:statement_problem},
the first part of Assumption~B could be removed, but then all results hold conditionally on non-extinction.

An important role will be played by the \textit{cumulant generating function} $\varphi$ of the intensity $\vartheta_1$:
\begin{equation}\label{eq:def_varphi}
\varphi(\beta) = \log \E \left[\sum_{k\in\Z} \eee^{\beta k}  L_1(k)\right] \in (-\infty,+\infty],
\quad \beta\in\R.
\end{equation}
Clearly, $\varphi(0)=\log m>0$.

\vspace*{2mm}
\noindent
\textsc{Assumption C:} The function $\varphi$ is finite in some open interval containing $0$.

\vspace*{2mm}
Let $\cD_\varphi$  be the maximal open interval on which $\varphi$ is finite. It follows that $\varphi$ is strictly convex and infinitely differentiable on $\cD_\varphi$.
A crucial role in the study of the branching random walk is played by the \textit{Biggins martingale}:
\begin{equation}\label{eq:biggins_martingale_def}
W_{n}(\beta)
:=
\eee^{-\varphi(\beta)n} \sum_{i=1}^{N_n} \eee^{\beta z_{i,n}}
=
\sum_{k\in\Z}  \eee^{\beta k -\varphi(\beta)n} L_n(k),
\;\;\; \beta\in\cD_\varphi.
\end{equation}
By the martingale convergence theorem for non-negative martingales, we have for all $\beta\in \cD_\varphi$,
\begin{equation}\label{eq:W_n_W_beta}
W_n(\beta) \toas W_\infty(\beta).
\end{equation}
It is, however, possible that $W_\infty(\beta)=0$ a.s. The range of $\beta$ where this does not happen was found by Biggins~\cite{biggins_mart_conv}. Denote by $(\beta_-,\beta_+)\subset \cD_\varphi$ the open interval on which $\varphi'(\beta)\beta <\varphi(\beta)$:
\begin{align}
\beta_-&=\inf\{\beta\in\cD_\varphi\colon \varphi'(\beta) \beta < \varphi(\beta)\}, \label{eq:beta_-}\\
\beta_+&=\sup\{\beta\in\cD_\varphi\colon \varphi'(\beta) \beta < \varphi(\beta)\}. \label{eq:beta_+}
\end{align}
Clearly, $0\in (\beta_-,\beta_+)$, so that this interval is non-empty. The endpoints of the intervals $\cD_\varphi$ and $(\beta_-,\beta_+)$ are allowed to be infinite.  We also need the following moment condition which supplements Assumption~C.

\vspace*{2mm}
\noindent
\textsc{Assumption D:} There is a $p > 1$  such that for every compact set $K\subset (\beta_-,\beta_+)$,
\begin{equation}\label{eq:standing_assumption}
\sup_{\beta\in K}\E \left[W_1^{p}(\beta)\right]<\infty.
\end{equation}
Under the above assumptions, it follows from \citet[Theorem~A]{biggins_mart_conv} that for all $\beta\in (\beta_-,\beta_+)$,
$$
W_\infty(\beta)>0 \text{ a.s.}, \;\;\; \E W_\infty(\beta)=1.
$$

It is a crucial observation due to~\citet{biggins_uniform_IMS,biggins_uniform}
and~\citet{uchiyama} that the martingale convergence~\eqref{eq:W_n_W_beta} can be extended to a \textit{complex} neighborhood of the interval $(\beta_-,\beta_+)$. First of all, it is clear that the function $\varphi$ is defined as an analytic function of $\beta$ in a sufficiently small open set $U\subset \C$ containing the interval $\cD_\varphi$. It follows that for every $n\in\N$, the function $W_n(\beta)$ is well-defined as an analytic function on $U$. It has been shown in~\cite{biggins_uniform} and~\cite{uchiyama} that under Assumption~D there exist an open set $V\subset \C$ containing the interval $(\beta_-,\beta_+)$ and a random analytic function $W_\infty(\beta)$ on $V$ such that
\begin{equation}
\sup_{\beta \in K} |W_\infty(\beta) - W_n(\beta)| \toas 0
\end{equation}
for every compact set $K\subset V$.

Assumption~D is essential for our method of proof; see also Section~\ref{sec:proof_exp_char}
and the discussion in~\cite{biggins_uniform_IMS}.

Finally, as in the classical Chebyshev--Edgeworth--Cram\'er expansion, we need to assume that the lattice width
associated with the support of the intensity measure $\vartheta_1$ equals $1$.

\vspace*{2mm}
\noindent
\textsc{Assumption E:} There is no pair $h\in \{2,3,\ldots\}$, $a\in\Z$ such that $\vartheta_1$ is concentrated on the arithmetic progression $a+h\Z$.

\vspace*{2mm}
Assumption~E is not a restriction of generality because it can always be achieved by a suitable affine transformation; see Example~\ref{ex:xia_chen} below.

\subsection{Simulations}\label{subsec:simulations}
At various places we illustrate our results by simulations. For Figures~\ref{bild:BRW_CLT}, \ref{bild:BRW_LDP} and Figure~\ref{bild:BRW_L_n_k_n} (top) these are based on the point process $\zeta$ given by
$$
\P[\zeta = \delta_0] = \P[\zeta = \delta_{-1}+\delta_{+1}] = \frac 12,
$$
where $\delta_z$ is the Dirac delta-measure at $z\in\Z$.
In our simulations of the BRW we do not keep track of the individual locations as the number of particles grows
exponentially. Instead we make use of the following rules to obtain $L_n$:  $L_0(k)= \ind_{\{k=0\}}$ and
\begin{align*}
L_{n+1}(k) = (L_n(k) - T_n(k)) + T_n(k-1) + T_n(k+1), \;\;\; n\in\N_0, \; k\in \Z,
\end{align*}
where $T_n(k)$ is the number of particles at time $n$ and site $k$ that have $2$ descendants at time $n+1$ (located at sites $k+1$ and $k-1$). Given $L_n(k)$, $k\in\Z$, the random variables $T_n(k)$, $k\in\Z$, are conditionally independent and $T_n(k) \sim \text{Bin} (L_{n}(k), 1/2)$.

For Figure~\ref{bild:BRW_L_n_k_n} (middle, bottom) and Figures~\ref{bild:BRW_Argmax}, \ref{bild:BRW_Max} the simulations are based on the point process $\zeta$ with
$$
\P[\zeta = \delta_0] = p, \;\;\; \P[\zeta = 2\delta_{+1} + \delta_{-1}] = 1 - p
$$
with $p = 1/2$ (rational case) and $p =3(\pi-3)$ (irrational case).

\section{Results}\label{sec:results}

\subsection{Asymptotic expansion of the profile}
Consider a branching random walk on $\Z$ which satisfies Assumptions A--E. Recall that $L_n(k)$ denotes the number of particles of the branching random walk that are located at site $k\in\Z$ at time $n\in\N_0$.
Take some $\beta\in(\beta_-,\beta_+)$. Our limit theorems will be stated in terms of the ``tilted'' profile
$$
k\mapsto \eee^{\beta k -\varphi(\beta)n}L_n(k), \quad k\in \Z.
$$
As known from large deviations theory, the tilting operation allows to better access the properties of the profile for $k\approx \varphi'(\beta)n$.  Define the corresponding tilted cumulant
\begin{equation}
\kappa_j(\beta)=\varphi^{(j)}(\beta)
\end{equation}
as the $j$-th derivative of $\varphi$ at $\beta$, $j\in\N$.
In particular, we need the notation
\begin{equation}
\mu(\beta)  := \varphi'(\beta) = \kappa_1(\beta), \quad \sigma ^2(\beta) := \varphi''(\beta) = \kappa_2(\beta) > 0.
\end{equation}
Introduce the ``standardized coordinate''
\begin{equation}\label{eq:x_n_k_def}
x = x_n(k) := \frac{k-\mu(\beta) n}{\sigma(\beta) \sqrt n}, \quad k\in\Z.
\end{equation}
Note that for ease of reading we omit the dependence on $\beta$ in $x_n(k)$.

Our first theorem gives an asymptotic expansion of the tilted occupation number
$\eee^{\beta k - \varphi(\beta )n} L_n(k)$ in powers of $n^{-1/2}$.

\begin{theorem}\label{theo:asympt_expansion_BRW}
Consider a branching random walk satisfying Assumptions A--E. Fix $r\in\N_0$ and a compact interval $K\subset (\beta_-,\beta_+)$. Then we have
\begin{equation}\label{eq:asympt_expansion_BRW}
n^{\frac {r+1}2}
\sup_{\beta\in K}\sup_{k\in\Z}\left|\eee^{\beta k - \varphi(\beta )n} L_n(k)-
\frac {\eee^{-\frac 12 x^2_n(k)}} {\sigma(\beta) \sqrt {2\pi n}}
\sum_{j=0}^{r}
\frac 1 {n^{j/2}}F_j(x_n(k);\beta )\right| \toas 0
,
\end{equation}
where $F_{j}(x;\beta)$ is a degree $3j$ polynomial in $x$.
The coefficients of $F_{j}(x;\beta )$ are random and can be expressed through
$$
\kappa_2(\beta ), \ldots, \kappa_{j+2}(\beta ) \text{ and } W_\infty(\beta ), W_\infty'(\beta ), \ldots, W_{\infty}^{(j)}(\beta ).
$$
\end{theorem}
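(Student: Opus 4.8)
The plan is to pass through the Fourier (characteristic-function) side, mimicking the classical proof of the Chebyshev--Edgeworth--Cram\'er expansion~\eqref{eq:asympt_expansion_random_walk}, but with the deterministic characteristic function of $Z_1$ replaced by a suitable random analytic object built from the Biggins martingale. For fixed $\beta\in(\beta_-,\beta_+)$, the tilted occupation number $\eee^{\beta k-\varphi(\beta)n}L_n(k)$ can be recovered by Fourier inversion on the tory from the random trigonometric polynomial
\[
  \Phi_n(t;\beta) \;:=\; \sum_{k\in\Z}\eee^{\mathrm{i}tk}\,\eee^{\beta k-\varphi(\beta)n}L_n(k)
  \;=\; \eee^{-\varphi(\beta)n}\sum_{j=1}^{N_n}\eee^{(\beta+\mathrm{i}t)z_{j,n}}
  \;=\; \eee^{n(\varphi(\beta+\mathrm{i}t)-\varphi(\beta))}\,W_n(\beta+\mathrm{i}t),
\]
using the definition~\eqref{eq:biggins_martingale_def} of the Biggins martingale, now evaluated at the complex argument $\beta+\mathrm{i}t$. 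Thus
\[
  \eee^{\beta k-\varphi(\beta)n}L_n(k)
  \;=\; \frac{1}{2\pi}\int_{-\pi}^{\pi} \eee^{-\mathrm{i}tk}\,\eee^{n(\varphi(\beta+\mathrm{i}t)-\varphi(\beta))}\,W_n(\beta+\mathrm{i}t)\,\dd t.
\]
The first step is to split this integral into a small arc $|t|\le \delta$ and the remainder $\delta<|t|\le\pi$. On the small arc one rescales $t = s/\sqrt{n}$; since $\varphi$ is analytic on the complex neighborhood $U$, a Taylor expansion of $\varphi(\beta+\mathrm{i}t)-\varphi(\beta)$ around $t=0$ gives $n(\varphi(\beta+\mathrm{i}t)-\varphi(\beta)) = \mathrm{i}\mu(\beta)ns - \tfrac12\sigma^2(\beta)s^2 + \sum_{\ell\ge 3}\kappa_\ell(\beta)\tfrac{(\mathrm{i}s)^\ell}{\ell!}n^{1-\ell/2}$, exactly as in the i.i.d.\ case with cumulants $\kappa_\ell(\beta)=\varphi^{(\ell)}(\beta)$. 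The extra ingredient is the factor $W_n(\beta+\mathrm{i}s/\sqrt n)$, which by the uniform-on-compacts a.s.\ convergence of~\cite{biggins_uniform,uchiyama} to the random analytic function $W_\infty$ may be Taylor-expanded: $W_n(\beta+\mathrm{i}s/\sqrt n) = \sum_{\ell=0}^{r} W_\infty^{(\ell)}(\beta)\tfrac{(\mathrm{i}s)^\ell}{\ell!}n^{-\ell/2} + (\text{error})$, where the error incorporates both the replacement of $W_n$ by $W_\infty$ and the truncation of the Taylor series. Collecting the two expansions, multiplying them out, and integrating term by term against $\eee^{-\mathrm{i}sk}$ (equivalently, recognizing $\int s^m \eee^{-s^2/2}\eee^{\mathrm{i}sx}\dd s$ as producing Hermite polynomials $\Herm_m(x)$) yields, after collecting powers of $n^{-1/2}$, the asserted Gaussian density times $\sum_{j=0}^r n^{-j/2}F_j(x_n(k);\beta)$, where $F_j$ is a polynomial of degree $3j$ (the $3j$ coming from $\Herm_{3j}$ appearing at order $n^{-j/2}$, exactly as for $q_j$) with coefficients that are polynomials in $\kappa_2(\beta),\dots,\kappa_{j+2}(\beta)$ and in $W_\infty(\beta),\dots,W_\infty^{(j)}(\beta)$.

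The second step is to control the remainder integral over $\delta<|t|\le\pi$ and show it is a.s.\ $o(n^{-(r+1)/2})$ uniformly in $\beta\in K$. Here I would use Assumption~E: the lattice width being $1$ means $|\eee^{\varphi(\beta+\mathrm{i}t)-\varphi(\beta)}|<1$ strictly for $0<|t|\le\pi$, so $\sup_{\delta\le|t|\le\pi}|\eee^{n(\varphi(\beta+\mathrm{i}t)-\varphi(\beta))}|$ decays geometrically in $n$, uniformly in $\beta\in K$ by compactness. This must be combined with an a.s.\ upper bound on $\sup_{\delta\le|t|\le\pi}|W_n(\beta+\mathrm{i}t)|$; one cannot directly use the analytic-convergence result of~\cite{biggins_uniform} because the segment $\{\beta+\mathrm{i}t : |t|\le\pi\}$ need not lie inside the domain $V$ where $W_\infty$ is defined. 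Instead I would bound $|W_n(\beta+\mathrm{i}t)|\le \eee^{-\varphi(\beta)n}\sum_{j}\eee^{\beta z_{j,n}} = W_n(\beta)\cdot\eee^{(\varphi(\beta)-\varphi(\beta))\cdot 0}$... more carefully, $|W_n(\beta+\mathrm{i}t)| \le \eee^{-\Re\varphi(\beta+\mathrm{i}t)\,n}\sum_j \eee^{\beta z_{j,n}}$; since $\Re\varphi(\beta+\mathrm{i}t)\le\varphi(\beta)$ with a gap bounded below on $\delta\le|t|\le\pi$, and $\eee^{-\varphi(\beta)n}\sum_j\eee^{\beta z_{j,n}}=W_n(\beta)\to W_\infty(\beta)<\infty$ a.s., the whole remainder is a.s.\ geometrically small. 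Some care is needed to make the estimates uniform in $\beta\in K$ and to handle the Borel--Cantelli / subsequence argument converting the geometric decay into the stated a.s.\ statement; Assumption~D ensures enough integrability of $W_n$ to run maximal-inequality arguments if a direct pathwise bound proves awkward.

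The main obstacle, and the place where this differs genuinely from the classical proof, is the control of the error term on the small arc $|t|\le\delta$, specifically the propagation of the \emph{a.s.} and \emph{uniform-in-}$\beta$ convergence $W_n\to W_\infty$ and its derivatives. One needs: (i) that $W_n^{(\ell)}(\beta)\to W_\infty^{(\ell)}(\beta)$ a.s.\ uniformly for $\beta\in K$ for each $\ell\le r$ (this follows from the uniform-on-compacts analytic convergence plus Cauchy's integral formula for derivatives, choosing a slightly larger compact set inside $V$); and (ii) a quantitative rate: the expansion~\eqref{eq:asympt_expansion_BRW} asserts the error is $o(n^{-(r+1)/2})$, which forces one to know that $\sup_{\beta\in K}|W_n(\beta)-W_\infty(\beta)|$ (and similarly for derivatives and for $W_n$ evaluated at complex arguments near $K$) decays, not merely that it tends to $0$. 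Here the paper presumably invokes mod-$\varphi$-type or $L^p$-martingale estimates (flagged by Assumption~D and the ``mod-$\varphi$-convergence'' keyword) to get a polynomial-in-$n$ rate for $\|W_n-W_\infty\|_{\infty,K}$, perhaps of order $n^{-(r+1)/2-\epsilon}$ after passing to a subsequence and interpolating. Reconciling this rate requirement with the a.s.\ claim — as opposed to an in-probability claim — is the technical heart: I would expect the proof to establish the rate along the sequence $n$ via an $L^p$ bound on $\|W_n-W_\infty\|_{\infty,K}$, Markov's inequality, and Borel--Cantelli, possibly passing through a deterministic-rate lemma on the tail of the martingale increments. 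Everything else — the Taylor expansions, the bookkeeping of powers of $n^{-1/2}$, the identification of Hermite polynomials, the degree count $\deg F_j = 3j$, and the description of the coefficients — is a routine (if lengthy) adaptation of the i.i.d.\ computation recalled in Section~\ref{subsec:edgeworth_iid}.
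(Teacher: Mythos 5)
Your small-arc analysis is essentially the paper's own route: after Fourier inversion one multiplies the classical Edgeworth expansion of the i.i.d.\ factor by a Taylor expansion of $W_n$ at $\beta$, and the quantitative ingredient you correctly flag --- a superpolynomial (in fact exponential) a.s.\ rate for $\sup_{\beta\in K}\sup_{|u|\le\eps}|W_n(\beta+u)-W_\infty(\beta+u)|$ --- is exactly the paper's Lemma~\ref{lem:W_infty_W_T}, obtained from Assumption~D via Biggins's $L^1$ bounds on martingale increments, Cauchy's integral formula and a summability argument. (A smaller omission: for $n^{1/7}\lesssim |s|\lesssim a\sqrt n$ the Edgeworth expansion of the i.i.d.\ factor is no longer valid, and one needs the standard bound $|f_n(s;\beta)|\le\eee^{-s^2/6}$ together with a.s.\ boundedness of $W_n$ on a small complex neighbourhood of $K$; this is the paper's Step~2 and is routine.)

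The genuine gap is your treatment of the outer arc $\delta\le|t|\le\pi$. The estimate you propose is vacuous: bounding $|W_n(\beta+\mathrm{i}t)|\le\eee^{-n\Re\varphi(\beta+\mathrm{i}t)}\sum_j\eee^{\beta z_{j,n}}$ and multiplying by $|\eee^{n(\varphi(\beta+\mathrm{i}t)-\varphi(\beta))}|=\eee^{-n(\varphi(\beta)-\Re\varphi(\beta+\mathrm{i}t))}$ makes the two exponentials cancel exactly, leaving only $|\Phi_n(t;\beta)|\le W_n(\beta)=O(1)$ a.s.\ --- the same as the trivial bound $|\Phi_n(t;\beta)|\le\Phi_n(0;\beta)$ --- so the remainder integral is a priori $O(1)$, not $o(n^{-(r+1)/2})$. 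The strict gap $\varphi(\beta)-\Re\varphi(\beta+\mathrm{i}t)>0$ furnished by Assumption~E cannot be used twice: it kills the deterministic prefactor, but $|W_n(\beta+\mathrm{i}t)|$ may a priori grow at exactly the compensating exponential rate, since the a.s.\ convergence $W_n\to W_\infty$ holds only on a complex neighbourhood $V$ of $(\beta_-,\beta_+)$, which need not contain $\beta+\mathrm{i}t$ when $|t|$ is of order one. What is actually needed --- and what occupies Step~3 of the paper's proof of Theorem~\ref{theo:asympt_expansion_BRW} --- is the a.s.\ statement that $\sup_{\beta\in K}\sup_{\delta\le|t|\le\pi}|W_n(\beta+\mathrm{i}t)|\,\eee^{-n(\varphi(\beta)-\Re\varphi(\beta+\mathrm{i}t))}$ decays faster than any power of $n$. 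The paper proves $\E\bigl[\sup_{z\in\bD_{\eps_0}(z_0)}|W_n(z)|\,\eee^{-n(\varphi(\Re z)-\Re\varphi(z))}\bigr]\le C_0\eee^{-\delta_0 n}$ using Assumption~D through Biggins's moment bound on $\E|W_n(z)|^{\alpha}$ with $\alpha\in(1,2)$ chosen so that $\alpha^{-1}\varphi(\alpha\Re z_0)<\varphi(\Re z_0)$ --- a choice possible precisely because $\Re z_0\in(\beta_-,\beta_+)$, which is where the restriction to $(\beta_-,\beta_+)$ rather than all of $\cD_\varphi$ enters --- combined with Cauchy's integral formula, Jensen's inequality, a compactness covering, and Markov plus Borel--Cantelli. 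Without an argument of this kind (or some substitute control of $W_n$ far from the real axis), your proof does not close; Assumption~E alone does not suffice on this range, and your own correct observation that the segment leaves $V$ already signals that a pathwise bound cannot do the job.
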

Stating the complete  formula for $F_j(x;\beta )$ requires introducing complicated notation and is therefore postponed
to the proof in Section~\ref{sec:prooftheo2.1}. Here, we provide only the first three terms of the expansion:
\begin{align}
F_0(x;\beta ) &= W_\infty(\beta ),\label{eq:F_0_BRW}\\
F_1(x;\beta ) &= W_\infty(\beta ) \frac{\kappa_3(\beta )}{6\sigma^3(\beta)} \Herm_3(x) + W_\infty' (\beta ) \frac{x}{\sigma(\beta)},\label{eq:F_1_BRW}\\
F_2(x;\beta ) &= W_\infty(\beta )\left(\frac{\kappa_4(\beta )}{24 \sigma^4(\beta)} \Herm_4(x) + \frac{\kappa_3^2(\beta )}{72 \sigma^6(\beta)} \Herm_6(x)\right)\label{eq:F_2_BRW}\\
&+ W_\infty'(\beta ) \frac{\kappa_3(\beta )}{6\sigma^4(\beta)}\Herm_4(x)
+ W_\infty''(\beta ) \frac{1}{2\sigma^2(\beta)}\Herm_2(x).\notag
\end{align}
\begin{remark}
We will see in the proof of Theorem~\ref{theo:asympt_expansion_BRW}, Equation~\eqref{eq:U_r_n_def}, that
$$
F_j(x;\beta) = \sum_{m=0}^j \frac{W_{\infty}^{(m)}(\beta)}{m!} Q_{m,j}(x;\beta),
$$
where $Q_{0,j}(x;\beta), \ldots, Q_{j,j}(x;\beta)$ are deterministic polynomials in $x$ with coefficients depending on $\beta$. We will also see that $Q_{0,j}(x;\beta)$ (the coefficient of $W_\infty(\beta)$) is the same as  $q_j(x)$ in~\eqref{eq:asympt_expansion_random_walk} but with $\kappa_j$ replaced by $\kappa_{j}(\beta)$.
\end{remark}

\begin{remark}\label{rem:polynomial_order}
Theorem~\ref{theo:asympt_expansion_BRW} remains valid if in the formula for $F_j$ we replace the derivatives of
$W_\infty$ by the corresponding derivatives of $W_n$. The reason is that w.p.\ $1$, $W_n$ converges to $W_\infty$
exponentially fast together with all its derivatives
 (see Lemma~\ref{lem:W_infty_W_T}), while the error term in Theorem~\ref{theo:asympt_expansion_BRW}
is of polynomial order only. This fact is used in our simulations where we replace $W_\infty$ by $W_n$. It was shown in~\cite{roesler_topchii_vatutin1}, see also~\cite{neininger}, that in a suitable range of $\beta$ the asymptotic distribution of the appropriately normalized difference $W_n(\beta) - W_{\infty}(\beta)$ is a mixture of centered normals. A functional limit theorem for this difference was obtained in~\cite{gruebel_kabluchko}.
\end{remark}

\begin{remark}
For the branching Brownian motion, an expansion similar to~\eqref{eq:asympt_expansion_BRW} (with $\beta=0$) was obtained by~\citet{revesz_etal}. In a general BRW, the transition mechanism is not Gaussian, so that methods specific to the Gaussian setting cannot be used.
\end{remark}

In the rest of Section~\ref{sec:results} we state numerous consequences of Theorem~\ref{theo:asympt_expansion_BRW}. It
allows us to recover many known results in a unified way and to answer a number of open questions on the individual occupation numbers $L_n(k)$, as well as the height and the mode of the BRW profile.

\subsection{Local and global central limit theorems}
Taking only the first term in the expansion given in Theorem~\ref{theo:asympt_expansion_BRW} (meaning that $r=0$) we obtain
\begin{equation}
\sqrt n \sup_{k\in\Z} \left|\eee^{\beta k - \varphi(\beta) n} L_n(k) -
\frac {W_\infty(\beta)} {\sqrt {2\pi \varphi''(\beta) n}} \exp\left\{-\frac {(k-\varphi'(\beta) n)^2}{2\varphi''(\beta) n}\right\}
\right| \toas 0.
\end{equation}
The most interesting case is $\beta = 0$ when there is no tilting and we obtain the following  local limit theorem for the BRW:
\begin{equation}\label{eq:local_limit_BRW}
\sqrt n \sup_{k\in\Z} \left| \frac{L_n(k)}{m^{n}} -
\frac {W_\infty(0)} {\sqrt {2\pi\varphi''(0) n}} \exp\left\{-\frac {(k-\varphi'(0)n)^2}{2\varphi''(0) n}\right\}
\right| \toas 0.
\end{equation}
Roughly speaking, \eqref{eq:local_limit_BRW} says that the profile $k\mapsto L_n(k)$ has an approximately Gaussian shape
with mean $\varphi'(0) n$, standard deviation $\sqrt {\varphi''(0) n}$, and the total mass of the profile is $N_n \sim
W_\infty(0)m^n$; see the left part of Figure~\ref{bild:BRW_CLT}. In Figures~\ref{bild:BRW_CLT} and~\ref{bild:BRW_LDP}
we use vertical bars for the discrete profiles and continuous lines for the approximating functions.

\begin{figure}[!htbp]
\includegraphics[width=0.49\textwidth]{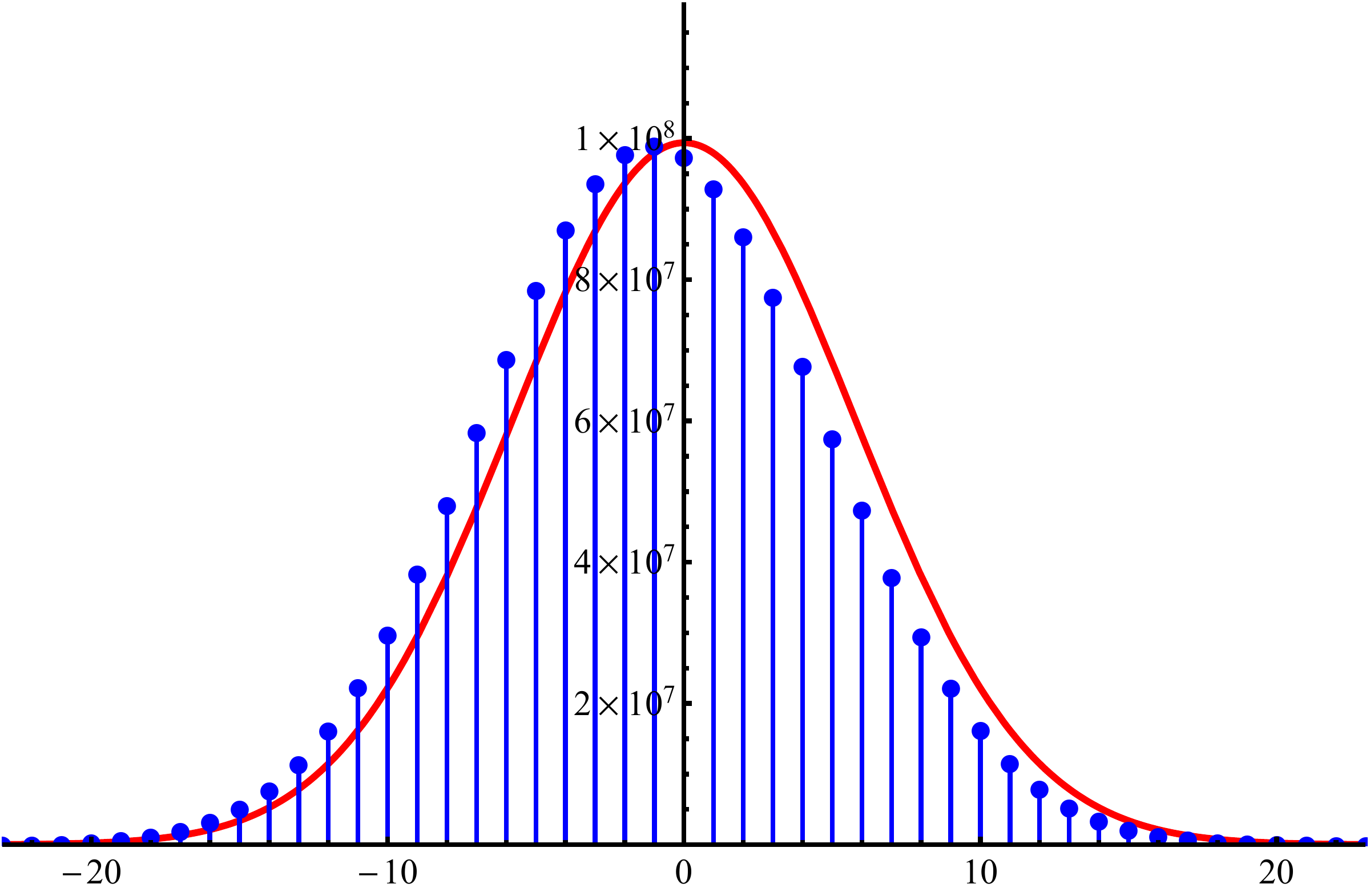}
\includegraphics[width=0.49\textwidth]{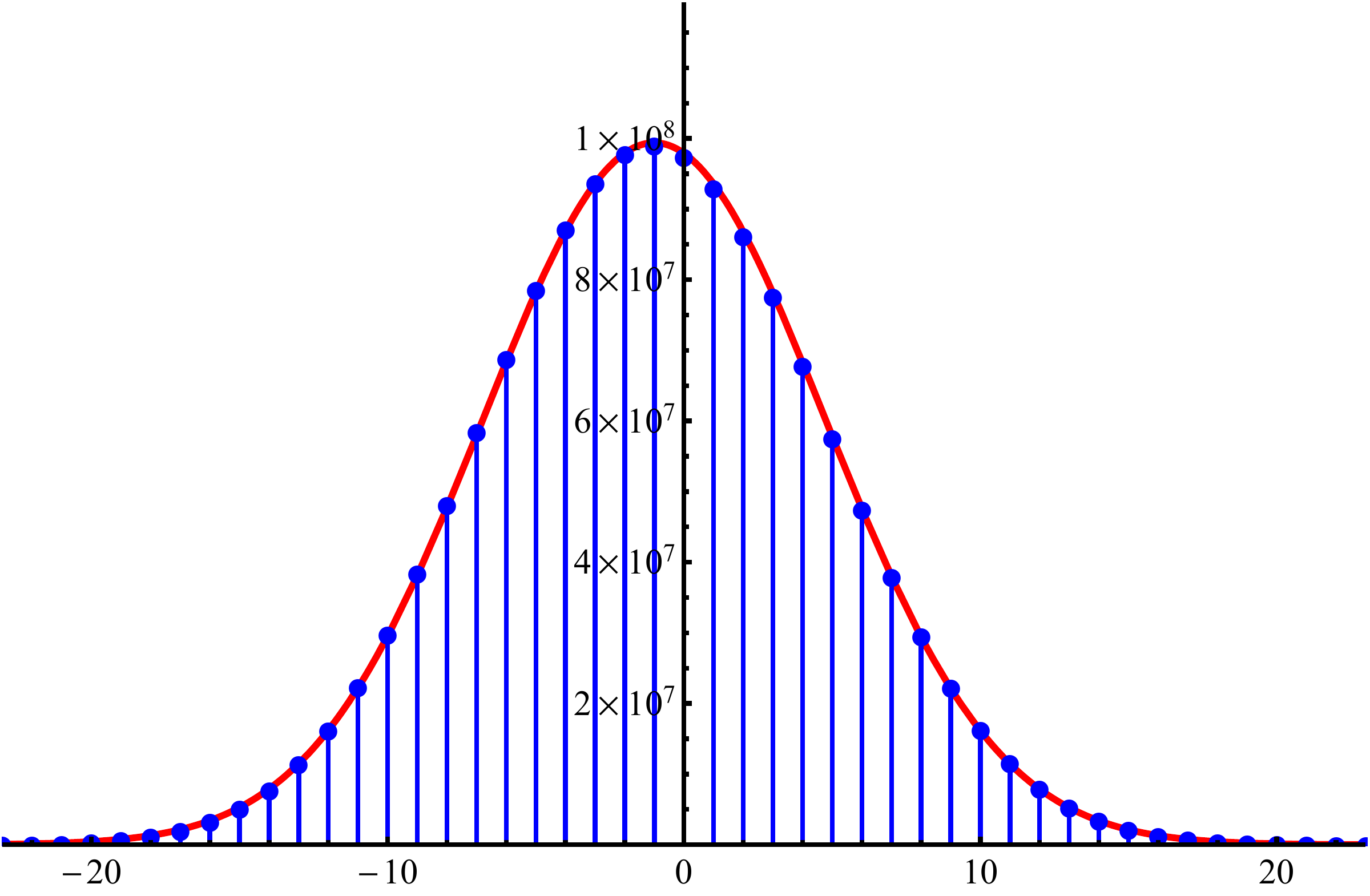}
\caption
{\small Left: A realization of the profile $k\mapsto L_n(k)$ of a branching random walk (blue) at time $n=50$ and the first, Gaussian term (see~\eqref{eq:local_limit_BRW}) in the asymptotic expansion (red). Right: The same profile (blue) and the Gaussian term shifted by $W'_n(0)/W_n(0)$ (red).}
\label{bild:BRW_CLT}
\end{figure}

Let us now look at the next term in the expansion. Theorem~\ref{theo:asympt_expansion_BRW} with $r=1$ and $\beta=0$ yields that
\begin{multline}\label{eq:local_shift_corr}
\frac{L_n(k)}{W_\infty(0) m^n}  - \frac {\eee^{-\frac 12 x_n^2(k)}} {\sigma(0) \sqrt {2\pi n}}
\\
=\frac {\eee^{-\frac 12 x_n^2(k)}} {\sigma(0) \sqrt {2\pi n}} \cdot \frac 1{\sqrt n} \left(\frac{\kappa_3(0)}{6\sigma^3(0)} \Herm_3(x_n(k)) + \frac{W_\infty'(0)}{W_\infty(0)} \frac {x_n(k)}{\sigma(0)}  \right) + o\left(\frac{1}{n}\right)
\quad \text{a.s.},
\end{multline}
where the $o$-term is uniform in $k\in\Z$.
There are two correction terms on the right-hand side of~\eqref{eq:local_shift_corr}. The term involving $\Herm_3$ is the ``shape correction'' to the Gaussian profile. The same term appears in the expansion of the expected profile $\E L_n(k)$; see Section~\ref{subsec:expansion_expected}. The term involving $W_\infty'(0)/W_\infty(0)$  can be thought of as a random ``location correction''. Indeed, this term says that in order to obtain a better approximation  to the BRW profile we have to take a Gaussian profile centered at $\varphi'(0)n + W_\infty'(0)/W_\infty(0)$ rather than at $\varphi'(0)n$; see the left and right parts of Figure~\ref{bild:BRW_CLT}.


Note that the random variable $W_\infty'(0)$ appearing above is the a.s.\ limit of the martingale
$$
W_n'(0)  = \frac 1 {m^n} \sum_{i=1}^{N_n} (z_{i,n} - \varphi'(0)n).
$$
Since the total number of particles at time $n$ is $N_n = W_n(0) m^n$, we can view $W_n'(0)/W_n(0)$ as an estimate for the ``shift'' of the profile w.r.t.\ its ``expectation'' $\varphi'(0)n$. This explains the appearance of $W_\infty'(0)/W_\infty(0)$ as a ``location correction'' in a quite natural way.
Similarly, the variable $W_\infty''(0)$ which will appear frequently below is the limit of the martingale
$$
W_n''(0) = \frac 1 {m^n} \sum_{i=1}^{N_n} \left\{(z_{i,n} - \varphi'(0)n)^2 - \varphi''(0)n\right\}.
$$



There is also a global central limit theorem for the BRW, originally known as the Harris conjecture~\cite[Chapter III, \S 16]{harris_book}.  It states that for all $x\in\R$,
\begin{equation}\label{eq:CLT_BRW}
\frac 1{m^{n}} \#\left\{1\leq i\leq N_n\colon \frac{z_{i,n}-\varphi'(0) n}{\sqrt{\varphi''(0) n}} \leq  x \right\}
\toas  \frac{W_\infty(0)}{\sqrt {2\pi}} \int_{-\infty}^{x} \eee^{-\frac12 y^2} \dd y.
\end{equation}
Various forms of~\eqref{eq:CLT_BRW} and~\eqref{eq:local_limit_BRW} have been obtained in~\cite{stam,joffe_moncayo,asmussen_kaplan1,asmussen_kaplan2,biggins_CLT,uchiyama,biggins_uniform,yoshida,gao_liu_wang}.
In fact, we can obtain a full asymptotic expansion in~\eqref{eq:CLT_BRW}. To this end, one takes sums in Theorem~\ref{theo:asympt_expansion_BRW} and uses the Euler--MacLaurin formula to approximate sums by integrals. We will record here only the first non-trivial term of this expansion.
\begin{proposition}\label{prop:CLT_global_speed}
Consider a branching random walk satisfying Assumptions A--E. Then,
\begin{multline}\label{eq:berry_esseen}
\frac 1 {W_\infty(0) m^n} \sum_{h=-\infty}^{k} L_n(h) - \frac {1}{\sqrt{2\pi}} \int_{-\infty}^{x_n(k)} \eee^{-\frac 12 z^2} \dd z =
\\
 \frac {\eee^{-\frac 12 x_n^2(k)}} {\sigma(0) \sqrt {2\pi n} }
\left(\frac 12 - \frac{\kappa_3(0)}{6\sigma^2(0)} (x_n^2(k) -1) - \frac {W_\infty'(0)}{W_\infty(0)}\right) + o\left(\frac 1 {\sqrt n}\right)
\quad\text{a.s.},
\end{multline}
where the $o$-term is uniform over $k\in\Z$.
\end{proposition}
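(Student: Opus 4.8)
The plan is to sum the pointwise expansion of Theorem~\ref{theo:asympt_expansion_BRW} over the sites $h\le k$ and to convert the resulting Riemann sum into an integral by the Euler--MacLaurin formula; the only genuine difficulty is the uniform control of the tails, which I handle by exponential tilting with a parameter tending to $0$ at the rate $\sqrt{(\log n)/n}$. Throughout I fix $\beta=0$, abbreviate $\sigma=\sigma(0)$ and $\phi(z)=\eee^{-z^2/2}/\sqrt{2\pi}$, and observe that $\{x_n(h)\colon h\in\Z\}$ is an arithmetic progression of mesh $\delta=1/(\sigma\sqrt n)$. Theorem~\ref{theo:asympt_expansion_BRW} with $r=2$ and $\beta=0$ gives, uniformly in $h\in\Z$ and a.s.,
\[
\frac{L_n(h)}{W_\infty(0)\,m^n}=\frac{\phi(x_n(h))}{\sigma\sqrt n}\left(1+\frac{g_1(x_n(h))}{\sqrt n}+\frac{g_2(x_n(h))}{n}\right)+o\!\left(n^{-3/2}\right),
\]
with $g_1(x)=\tfrac{\kappa_3(0)}{6\sigma^3}\Herm_3(x)+\tfrac{W_\infty'(0)}{W_\infty(0)}\tfrac{x}{\sigma}$ read off from~\eqref{eq:F_1_BRW} and $g_2$ from~\eqref{eq:F_2_BRW}. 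Here one must take $r=2$ rather than $r=1$, since the region where this expansion will be summed has width of order $\sqrt{n\log n}$, so a per-site error of only $o(n^{-1})$ would not add up to $o(n^{-1/2})$.

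Fix a sufficiently large constant $C$ and set $B_n=\{h\in\Z\colon |x_n(h)|\le C\sqrt{\log n}\}$, so $|B_n|=O(\sqrt{n\log n})$. Outside $B_n$ the profile carries negligible mass --- this is the step that carries the real content. For $h>k_0:=\varphi'(0)n+C\sigma\sqrt{n\log n}$ (i.e.\ $x_n(h)>C\sqrt{\log n}$) put $\beta_n=(C/\sigma)\sqrt{(\log n)/n}\in(0,\beta_+)$ for $n$ large; by Markov's inequality applied to $L_n$ with weight $\eee^{\beta_n\cdot}$ and the definition of the Biggins martingale,
\[
\frac{1}{m^n}\sum_{h>k_0}L_n(h)\;\le\;\eee^{-\beta_n k_0}\,\frac{\eee^{\varphi(\beta_n)n}}{m^n}\,W_n(\beta_n)\;=\;\eee^{\,n\left(\varphi(\beta_n)-\varphi(0)-\beta_n\varphi'(0)\right)-\beta_n C\sigma\sqrt{n\log n}}\,W_n(\beta_n).
\]
Because $\varphi''(0)=\sigma^2$ we have $\varphi(\beta_n)-\varphi(0)-\beta_n\varphi'(0)=\tfrac12\sigma^2\beta_n^2(1+o(1))$, so the exponent equals $-\tfrac12 C^2(\log n)(1+o(1))$, while $W_n(\beta_n)\toas W_\infty(0)$ by the a.s.\ uniform convergence of the Biggins martingale on a fixed compact neighbourhood of $0$; hence the bound is $o(n^{-1/2})$ a.s.\ for $C$ large. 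The left part $\{h\colon x_n(h)<-C\sqrt{\log n}\}$ is handled symmetrically with $\beta_n<0$. Consequently, for every $k\in\Z$,
\[
\frac{1}{W_\infty(0)\,m^n}\sum_{h\le k}L_n(h)=\frac{1}{W_\infty(0)\,m^n}\sum_{\substack{h\le k\\ h\in B_n}}L_n(h)+o(n^{-1/2})\qquad\text{a.s.}
\]

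Finally, on $B_n$ I sum the pointwise expansion of the first paragraph: its error contributes $|B_n|\cdot o(n^{-3/2})=o(n^{-1/2})$, and since the main part summed over $\{h\le k\}\setminus B_n$ is $o(n^{-1/2})$ for $C$ large (Gaussian tail bound), the sum of the main part over $\{h\le k\}\cap B_n$ may be replaced by the sum over all $h\le k$. The Euler--MacLaurin formula now yields, uniformly in $k$,
\[
\frac{1}{\sigma\sqrt n}\sum_{h\le k}\phi(x_n(h))\left(1+\tfrac{g_1(x_n(h))}{\sqrt n}+\tfrac{g_2(x_n(h))}{n}\right)=\int_{-\infty}^{x_n(k)}\!\phi(z)\left(1+\tfrac{g_1(z)}{\sqrt n}\right)\dd z+\frac{\delta}{2}\,\phi(x_n(k))+O\!\left(\tfrac1n\right),
\]
the half-endpoint term $\tfrac\delta2\phi(x_n(k))=\phi(x_n(k))/(2\sigma\sqrt n)$ being the origin of the constant $\tfrac12$ in~\eqref{eq:berry_esseen}. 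Using $\int_{-\infty}^x\Herm_m(z)\phi(z)\dd z=-\Herm_{m-1}(x)\phi(x)$ (valid for $m\ge1$ and immediate from the definition of $\Herm_m$) together with $\Herm_2(x)=x^2-1$ from~\eqref{eq:Herm1} and $\Herm_0\equiv1$, one computes $\int_{-\infty}^x g_1(z)\phi(z)\dd z=-\big(\tfrac{\kappa_3(0)}{6\sigma^3}(x^2-1)+\tfrac{W_\infty'(0)}{W_\infty(0)\sigma}\big)\phi(x)$; factoring out $\phi(x_n(k))/(\sigma\sqrt n)=\eee^{-x_n^2(k)/2}/(\sigma\sqrt{2\pi n})$ and collecting the three contributions --- half-endpoint, $\Herm_2$-term, $W_\infty'$-term --- reproduces precisely the right-hand side of~\eqref{eq:berry_esseen}, everything else (the $g_2/n$ term, the Euler--MacLaurin remainder, and the two discarded tail masses) being $o(n^{-1/2})$. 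As indicated, the one nontrivial ingredient is the uniform moderate-deviation estimate of the second paragraph; the remainder is Euler--MacLaurin bookkeeping combined with the Hermite calculus.
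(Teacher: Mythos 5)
Your proof is correct, and its core coincides with the paper's: you sum the pointwise expansion of Theorem~\ref{theo:asympt_expansion_BRW} over $h\le k$, convert the sum into an integral by the trapezoidal (Euler--MacLaurin) rule so that the half-endpoint term produces the constant $\tfrac12$, and evaluate the $F_1$-contribution through $\int_{-\infty}^{x}\Herm_m(z)\,\eee^{-z^2/2}\dd z=-\Herm_{m-1}(x)\,\eee^{-x^2/2}$, exactly as in the paper's Steps~0--1 (compare \eqref{eq:proof_berry_esseen_exp}, \eqref{eq:berry_esseen_trap1}--\eqref{eq:berry_esseen_int}). The one genuinely different ingredient is the tail estimate. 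The paper truncates at the large-deviation scale $|h|\ge n^{3/4}$ and controls the discarded mass in expectation, via a Chernoff bound for $\E L_n$ followed by Markov's inequality and Borel--Cantelli, giving \eqref{eq:berry_esseen1}; you truncate at the moderate-deviation scale $C\sigma\sqrt{n\log n}$ and bound the discarded mass pathwise by a tilted Markov inequality expressed through the Biggins martingale itself, $m^{-n}\sum_{h>k_0}L_n(h)\le \eee^{n(\varphi(\beta_n)-\varphi(0)-\beta_n\varphi'(0))-\beta_n C\sigma\sqrt{n\log n}}\,W_n(\beta_n)$ with $\beta_n\asymp\sqrt{(\log n)/n}$, using the a.s.\ locally uniform convergence $W_n\to W_\infty$ near $0$ (Lemma~\ref{lem:W_infty_W_T}) to keep $W_n(\beta_n)$ bounded. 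Both routes are valid and yield the required uniformity in $k$; yours stays entirely pathwise and avoids the Borel--Cantelli step, at the price of making the tilt $n$-dependent, and because your summation window has width only $O(\sqrt{n\log n})$ the expansion with $r=2$ suffices (the paper invokes $r=3$ for its wider $n^{3/4}$-window, although $r=2$ would in fact also do there, since $n^{3/4}\cdot o(n^{-3/2})=o(n^{-1/2})$). Your bookkeeping of the error terms (per-site error times window size, Gaussian tails of the main part outside the window, the $F_2/n$ term and the trapezoidal remainder all being $o(n^{-1/2})$ uniformly in $k$) is accurate.
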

A non-lattice version of Proposition~\ref{prop:CLT_global_speed} has been obtained recently in~\cite{gao_liu}. In fact, similar results hold for many models other than the BRW; see~\cite{kabluchko_distr_of_levels}.


\subsection{Comparing the profile and the expected profile}\label{subsec:expansion_expected}
It is interesting to compare the expansion of $L_n(k)$ stated in Theorem~\ref{theo:asympt_expansion_BRW} with the expansion of $\E L_n(k)$. Since the intensity measure $\vartheta_n$ of the branching random walk at time $n$ is just the $n$-fold convolution of $\vartheta_1$,  we can apply the classical expansion~\eqref{eq:asympt_expansion_random_walk} to the expected profile $\E L_n(k) = \vartheta_n(\{k\})$. The proof of the following proposition is standard and will be given in Section~\ref{sec:proof_exp_char}. Formally, it can be obtained from Theorem~\ref{theo:asympt_expansion_BRW} by taking the expectation and noting that $\E W_\infty(\beta)=1$  for all $\beta\in (\beta_-,\beta_+)$ which implies that all higher derivatives of $W_\infty(\beta)$ have zero expectation. 

\begin{proposition}\label{prop:expansion_expected}
Consider a branching random walk satisfying Assumptions A, C, E. Fix $r\in\N_0$ and a compact interval $K\subset
\cD_\varphi$.
Then we have
\begin{equation}\label{eq:asympt_expansion_intensity}
\eee^{\beta  k - \varphi(\beta )n} \E L_n(k)
=
\frac {\eee^{-\frac 12 x^2_n(k)}} {\sigma(\beta) \sqrt {2\pi n}}
\sum_{j=0}^{r}
\frac 1 {n^{j/2}}q_j(x_n(k);\beta) + o\left(n^{-\frac{r+1}2}\right),
\end{equation}
where $q_j(x;\beta)$ is the same as $q_j(x)$ in~\eqref{eq:asympt_expansion_random_walk} but with $\kappa_j$ replaced by $\kappa_{j}(\beta)$. The $o$-term is uniform in $\beta \in K$ and $k\in\Z$.
\end{proposition}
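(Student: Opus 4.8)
The plan is to reduce Proposition~\ref{prop:expansion_expected} to the classical Chebyshev--Edgeworth--Cram\'er expansion~\eqref{eq:asympt_expansion_random_walk} by exponential tilting, the only non-routine ingredient being the uniformity in~$\beta$. First I would fix $\beta\in\cD_\varphi$ and introduce the tilted intensity, normalized to a probability measure,
\[
\mu_\beta(\{k\}) := \eee^{\beta k-\varphi(\beta)}\,\vartheta_1(\{k\}),\qquad k\in\Z,
\]
which is a probability measure because $\sum_{k}\eee^{\beta k}\vartheta_1(\{k\})=\E\sum_k\eee^{\beta k}L_1(k)=\eee^{\varphi(\beta)}$ by~\eqref{eq:def_varphi}, and whose support equals $\{k\colon\vartheta_1(\{k\})>0\}$ for every $\beta$ (tilting by a strictly positive factor changes neither the support nor the lattice span). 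Since $\vartheta_n$ is the $n$-fold convolution of $\vartheta_1$, a one-line computation gives $\mu_\beta^{*n}(\{k\})=\eee^{\beta k-\varphi(\beta)n}\vartheta_n(\{k\})=\eee^{\beta k-\varphi(\beta)n}\E L_n(k)$, so the left-hand side of~\eqref{eq:asympt_expansion_intensity} is exactly $\P[S_n^{(\beta)}=k]$, where $S_n^{(\beta)}$ is a sum of $n$ i.i.d.\ copies of a random variable $Z_1^{(\beta)}$ with law $\mu_\beta$.

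Next I would verify the hypotheses of~\eqref{eq:asympt_expansion_random_walk} for $Z_1^{(\beta)}$. Its cumulant generating function is $s\mapsto\varphi(\beta+s)-\varphi(\beta)$, so its $j$-th cumulant is $\varphi^{(j)}(\beta)=\kappa_j(\beta)$; in particular its mean is $\mu(\beta)=\varphi'(\beta)$ and its variance is $\sigma^2(\beta)=\varphi''(\beta)$, which is strictly positive by Assumption~A and the strict convexity of $\varphi$ on $\cD_\varphi$. Since $K\subset\cD_\varphi$ is compact, $\varphi$ is analytic on a complex neighbourhood of $K$, so $Z_1^{(\beta)}$ has finite moments of all orders and $\log\E[\eee^{isZ_1^{(\beta)}}]=\sum_{i=1}^{r+2}\kappa_i(\beta)\frac{(is)^i}{i!}+o(|s|^{r+2})$; the lattice span of $\mu_\beta$ equals $1$ by Assumption~E. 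Applying~\eqref{eq:asympt_expansion_random_walk} (in the form valid uniformly over all $k\in\Z$, appropriate when $Z_1^{(\beta)}$ may take negative values), and noting that the standardized coordinate there coincides with $x_n(k)$ of~\eqref{eq:x_n_k_def}, gives~\eqref{eq:asympt_expansion_intensity} for each fixed $\beta$, with $q_j(x;\beta)$ the polynomial $q_j$ of~\eqref{eq:asympt_expansion_random_walk} after replacing each cumulant $\kappa_i$ by $\kappa_i(\beta)$.

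The main obstacle is the uniformity in $\beta\in K$, which is not contained in the textbook statements. I would obtain it by re-running the Fourier-inversion proof of~\eqref{eq:asympt_expansion_random_walk} with $\beta$ treated as a parameter: starting from $\P[S_n^{(\beta)}=k]=\frac1{2\pi}\int_{-\pi}^{\pi}\eee^{-isk}\bigl(\E\eee^{isZ_1^{(\beta)}}\bigr)^{n}\,\dd s$, one splits the integral at $|s|=\delta$, Taylor-expands the log-characteristic function to order $r+2$ on $|s|\le\delta$, and bounds the contribution of $\delta<|s|\le\pi$ by $\sup_{\beta\in K}\sup_{\delta\le|s|\le\pi}|\E\eee^{isZ_1^{(\beta)}}|^{\,n}$. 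Every quantity entering the small-$s$ piece --- the cumulants $\kappa_2(\beta),\dots,\kappa_{r+2}(\beta)$ and $1/\sigma^2(\beta)$ --- is continuous in $\beta$ and hence bounded on the compact $K$, so the error bounds there are uniform; and the decisive estimate
\[
\sup_{\beta\in K}\ \sup_{\delta\le|s|\le\pi}\bigl|\E\eee^{isZ_1^{(\beta)}}\bigr|<1
\]
follows from the joint continuity of $(\beta,s)\mapsto\E\eee^{isZ_1^{(\beta)}}$, compactness of $K\times\{\delta\le|s|\le\pi\}$, and the fact that for each fixed $\beta$ this modulus is $<1$ throughout $[-\pi,\pi]\setminus\{0\}$ --- which is exactly the lattice-span-$1$ property (Assumption~E), holding for all $\beta$ at once since $\supp\mu_\beta$ does not depend on $\beta$. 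This delivers the error term $o(n^{-(r+1)/2})$ uniformly over $\beta\in K$ and $k\in\Z$.

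Finally, I would note the alternative route flagged in the introduction: taking expectations in Theorem~\ref{theo:asympt_expansion_BRW} and using $\E W_\infty(\beta)=1$, which forces $\E W_\infty^{(m)}(\beta)=0$ for all $m\ge1$, so that every random term in $F_j(x;\beta)$ disappears and $F_j$ collapses to its deterministic part, equal to $q_j(x;\beta)$. However, Theorem~\ref{theo:asympt_expansion_BRW} requires the additional Assumptions~B and~D, whereas the tilting argument above needs only Assumptions~A, C, E; so the self-contained route via exponential tilting is the one to follow here.
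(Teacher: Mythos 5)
Your proposal is correct and follows essentially the same route as the paper: tilt $\vartheta_1$ to the probability law $\eee^{\beta k-\varphi(\beta)}\vartheta_1(\{k\})$, identify $\eee^{\beta k-\varphi(\beta)n}\E L_n(k)$ with $\P[Z_1+\cdots+Z_n=k]$ for i.i.d.\ tilted summands with cumulants $\kappa_j(\beta)$, and invoke the classical expansion~\eqref{eq:asympt_expansion_random_walk}, with uniformity in $\beta\in K$ obtained by inspecting Petrov's Fourier-inversion proof. The only difference is that you spell out the uniformity argument (continuity of the cumulants on $K$ plus the uniform bound $\sup_{\beta\in K}\sup_{\delta\le|s|\le\pi}|\E\eee^{isZ_1^{(\beta)}}|<1$ via Assumption~E and compactness), which the paper simply asserts by reference to the classical proof.
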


We can considerably simplify the expansion given in Theorem~\ref{theo:asympt_expansion_BRW} if instead of $L_n(k)$ we consider $L_n(k) - W_\infty(\beta ) \E L_n(k)$. This normalization removes all terms involving $W_\infty(\beta )$ and we obtain
\begin{corollary}\label{cor:asympt_expansion_BRW_without_expectation}
Consider a branching random walk satisfying Assumptions A--E. Fix $r\in\N_0$ and a compact interval $K\subset (\beta_-,\beta_+)$. Then,
\begin{multline}
\eee^{\beta k - \varphi(\beta )n} (L_n(k) - W_\infty(\beta )\, \E L_n(k))\\
=\frac {\eee^{-\frac 12 x^2_n(k)}} {\sigma(\beta) \sqrt {2\pi n}}
\sum_{j=1}^{r}
\frac 1 {n^{j/2}}
F_j^{\circ}(x_n(k);\beta ) +  o\left(n^{-\frac{r+1}2}\right)
\quad \text{a.s.},\label{eq:exp_brw_simplified}
\end{multline}
where $F^{\circ}_j(x;\beta )$  is the same as $F_j(x;\beta )$ but without the term involving $W_\infty(\beta )$, that is
\begin{equation}
F^{\circ}_j(x;\beta ):= F_j(x;\beta )-W_\infty(\beta )q_j(x;\beta ).
\end{equation}
The $o$-term in~\eqref{eq:exp_brw_simplified} is uniform in $\beta \in K$ and $k\in\Z$.
\end{corollary}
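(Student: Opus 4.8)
The plan is to \emph{subtract} the two asymptotic expansions we already have at hand: the almost sure expansion of the tilted profile $\eee^{\beta k-\varphi(\beta)n}L_n(k)$ from Theorem~\ref{theo:asympt_expansion_BRW}, and the deterministic expansion of the tilted expected profile $\eee^{\beta k-\varphi(\beta)n}\E L_n(k)$ from Proposition~\ref{prop:expansion_expected} (applicable here since $K\subset(\beta_-,\beta_+)\subset\cD_\varphi$), the latter multiplied by the random factor $W_\infty(\beta)$. Write $x=x_n(k)$ and abbreviate the Gaussian prefactor by $g_n(k;\beta)=\eee^{-x_n^2(k)/2}\big/\big(\sigma(\beta)\sqrt{2\pi n}\big)$. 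Theorem~\ref{theo:asympt_expansion_BRW} gives $\eee^{\beta k-\varphi(\beta)n}L_n(k)=g_n(k;\beta)\sum_{j=0}^{r}n^{-j/2}F_j(x_n(k);\beta)+R_n(k;\beta)$ with $n^{(r+1)/2}\sup_{\beta\in K}\sup_{k\in\Z}|R_n(k;\beta)|\toas 0$, while Proposition~\ref{prop:expansion_expected} gives $\eee^{\beta k-\varphi(\beta)n}\E L_n(k)=g_n(k;\beta)\sum_{j=0}^{r}n^{-j/2}q_j(x_n(k);\beta)+r_n(k;\beta)$ with $\sup_{\beta\in K}\sup_{k\in\Z}|r_n(k;\beta)|=o(n^{-(r+1)/2})$ deterministically.

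Next I would control the factor $W_\infty(\beta)$. By the uniform-on-compacts convergence $W_n\to W_\infty$ recalled in Section~\ref{subsec:BRW_def_assumpt}, the limit $W_\infty$ is a (random) analytic, in particular continuous, function on an open set $V\supset(\beta_-,\beta_+)$, so $C:=\sup_{\beta\in K}|W_\infty(\beta)|<\infty$ almost surely. Multiplying the expansion of the expected profile by $W_\infty(\beta)$ thus yields $W_\infty(\beta)\eee^{\beta k-\varphi(\beta)n}\E L_n(k)=g_n(k;\beta)\sum_{j=0}^{r}n^{-j/2}W_\infty(\beta)q_j(x_n(k);\beta)+\widetilde r_n(k;\beta)$, where $\widetilde r_n=W_\infty(\beta)r_n$ satisfies $\sup_{\beta\in K}\sup_{k\in\Z}|\widetilde r_n(k;\beta)|\le C\cdot\sup_{\beta\in K}\sup_{k\in\Z}|r_n(k;\beta)|=o(n^{-(r+1)/2})$ almost surely.

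Finally, subtracting the two displays term by term: the $j=0$ contribution cancels because $F_0(x;\beta)=W_\infty(\beta)$ and $q_0(x;\beta)\equiv 1$, so that $F_0-W_\infty q_0\equiv 0$; for $1\le j\le r$ the resulting coefficient polynomial is $F_j(x;\beta)-W_\infty(\beta)q_j(x;\beta)=F_j^{\circ}(x;\beta)$ by definition. This gives $\eee^{\beta k-\varphi(\beta)n}(L_n(k)-W_\infty(\beta)\E L_n(k))=g_n(k;\beta)\sum_{j=1}^{r}n^{-j/2}F_j^{\circ}(x_n(k);\beta)+(R_n-\widetilde r_n)(k;\beta)$, and $\sup_{\beta\in K}\sup_{k\in\Z}|R_n(k;\beta)-\widetilde r_n(k;\beta)|=o(n^{-(r+1)/2})$ almost surely, which is exactly \eqref{eq:exp_brw_simplified}. (That $F_j^{\circ}$ genuinely contains no $W_\infty(\beta)$ term, only the derivatives $W_\infty^{(m)}(\beta)$ for $1\le m\le j$, is also visible from the Remark after Theorem~\ref{theo:asympt_expansion_BRW}, where $F_j^{\circ}=\sum_{m=1}^{j}\tfrac{W_\infty^{(m)}(\beta)}{m!}Q_{m,j}(x;\beta)$.)

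There is essentially no real obstacle here beyond bookkeeping; the one substantive point is the almost sure uniform bound $\sup_{\beta\in K}|W_\infty(\beta)|<\infty$ used to push the deterministic error $r_n$ through multiplication by $W_\infty(\beta)$, and this is guaranteed by the uniform-on-compacts martingale convergence for the complex-analytic extension of $W_n$ available under Assumption~D. The uniformity in $k\in\Z$ and $\beta\in K$ of the main terms requires nothing new: $g_n(k;\beta)$ times any polynomial in $x_n(k)$ is bounded uniformly in $k\in\Z$ and $\beta\in K$, since $\sigma(\beta)=\sqrt{\varphi''(\beta)}$ is bounded away from $0$ on the compact set $K$, so all uniformity issues have already been absorbed into Theorem~\ref{theo:asympt_expansion_BRW} and Proposition~\ref{prop:expansion_expected}.
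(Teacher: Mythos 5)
Your proof is correct and follows essentially the same route the paper intends: the corollary is obtained by subtracting $W_\infty(\beta)$ times the expansion of Proposition~\ref{prop:expansion_expected} from the expansion of Theorem~\ref{theo:asympt_expansion_BRW}, noting the cancellation of the $j=0$ term and the definition $F_j^{\circ}=F_j-W_\infty q_j$. Your one substantive addition — the a.s.\ bound $\sup_{\beta\in K}|W_\infty(\beta)|<\infty$ from the analyticity (hence continuity) of $W_\infty$ on a neighborhood of $(\beta_-,\beta_+)$, needed to keep the deterministic error term $o(n^{-(r+1)/2})$ after multiplication — is exactly the point the paper leaves implicit, so there is nothing to correct.
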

Note that the summation in~\eqref{eq:exp_brw_simplified} starts with $j=1$.   We will need the first two terms of the expansion, c.f.~\eqref{eq:F_1_BRW} and~\eqref{eq:F_2_BRW}:
\begin{align}
F_1^\circ(x;\beta ) &=  \frac{x}{\sigma(\beta)} W_\infty' (\beta ),\label{eq:F_1_circ_BRW}\\
F_2^\circ(x;\beta ) &=  \frac{\kappa_3(\beta )}{6\sigma^4(\beta)}W_\infty'(\beta )\Herm_4(x)
+  \frac{1}{2\sigma^2(\beta)}W_\infty''(\beta )\Herm_2(x).\label{eq:F_2_circ_BRW}
\end{align}


\subsection{Uniform expansions}
For every point $k\in\Z$ Theorem~\ref{theo:asympt_expansion_BRW} yields a family of asymptotic expansions of $L_n(k)$ parametrized by $\beta$.
It is natural to choose $\beta=\beta(\frac kn)$ as the solution to $\varphi'(\beta(\frac kn))=\frac kn$ because then $x_n(k)=0$. The \textit{information function} $I$ is defined by
$$
I(\theta) = \beta \varphi'(\beta) - \varphi(\beta) \text{ for } \theta = \varphi'(\beta).
$$
With the above choice of $\beta$, Theorem~\ref{theo:asympt_expansion_BRW} and Proposition~\ref{prop:expansion_expected} yield the following result.
\begin{corollary}\label{cor:asympt_expansion_BRW_optimal}
Consider a branching random walk satisfying Assumptions A--E. Fix $r\in\N_0$. Then we have
\begin{equation}\label{eq:L_n_uniform_exp}
\eee^{nI(\frac kn)} L_n(k)
=
\frac 1 {\sigma(\beta(\frac kn)) \sqrt {2\pi n}}
\sum_{j=0}^{r}
\frac 1 {n^{j/2}}F_j\Bigl(0;\beta\Bigl(\frac kn\Bigr)\Bigr)
+o\left( n^{-\frac {r+1}2}\right) \quad\text{a.s.},
\end{equation}
where the $o$-term is uniform over all $k\in\Z$ for which $\beta(\frac kn)$ exists and stays in a fixed compact set $K\subset (\beta_-,\beta_+)$. Similarly,
\begin{equation}\label{eq:EL_n_uniform_exp}
\eee^{nI(\frac kn)} \E L_n(k)
=
\frac 1 {\sigma(\beta(\frac kn)) \sqrt {2\pi n}}
\sum_{j=0}^{r}
\frac 1 {n^{j/2}}q_j\Bigl(0;\beta\Bigl(\frac kn\Bigl)\Bigr)
+ o\left( n^{-\frac {r+1}2}\right),
\end{equation}
where the $o$-term is uniform over all $k\in\Z$ such that $\beta(\frac kn)$ exists and stays in a fixed compact set $K\subset \cD_\varphi$.
\end{corollary}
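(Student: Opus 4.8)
The plan is to obtain both expansions by a direct substitution into Theorem~\ref{theo:asympt_expansion_BRW} and Proposition~\ref{prop:expansion_expected}, choosing the free tilting parameter $\beta$ as a function of $k$. Recall that $\varphi$ is strictly convex and smooth on $\cD_\varphi$, so $\varphi'$ is a strictly increasing homeomorphism of $\cD_\varphi$ onto its open image; hence for every $\theta$ in that image there is a unique $\beta(\theta)\in\cD_\varphi$ with $\varphi'(\beta(\theta))=\theta$. The hypothesis of the corollary is precisely that, for the values of $k$ under consideration, $\theta=k/n$ lies in this image and $\beta(k/n)$ belongs to a fixed compact set $K\subset(\beta_-,\beta_+)$ (resp.\ $K\subset\cD_\varphi$ for the expected profile).

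First I would record the two elementary identities that make the substitution work. If $\beta=\beta(k/n)$, then by definition $\mu(\beta)=\varphi'(\beta)=k/n$, so the standardized coordinate~\eqref{eq:x_n_k_def} becomes
\[
  x_n(k)=\frac{k-\mu(\beta)n}{\sigma(\beta)\sqrt n}=0 ,
\]
and, using $I(k/n)=\beta\varphi'(\beta)-\varphi(\beta)=\beta\,(k/n)-\varphi(\beta)$, the exponential prefactor satisfies $\beta k-\varphi(\beta)n=nI(k/n)$. Consequently, plugging $\beta=\beta(k/n)$ into the quantity inside the absolute value in~\eqref{eq:asympt_expansion_BRW} turns it into exactly
\[
  \eee^{nI(k/n)}L_n(k)-\frac{1}{\sigma(\beta(k/n))\sqrt{2\pi n}}\sum_{j=0}^{r}\frac{1}{n^{j/2}}F_j\bigl(0;\beta(k/n)\bigr),
\]
and similarly for the expected profile using~\eqref{eq:asympt_expansion_intensity} in place of~\eqref{eq:asympt_expansion_BRW}.

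The decisive point is the handling of uniformity. Theorem~\ref{theo:asympt_expansion_BRW}, applied with the given compact set $K$, says that $n^{(r+1)/2}$ times the supremum over \emph{all} $\beta\in K$ and \emph{all} $k\in\Z$ of the absolute value of the bracketed expression tends to $0$ almost surely. For each admissible $k$ we have $\beta(k/n)\in K$, so the absolute value of the $k$-th term above — obtained by the specific choice $\beta=\beta(k/n)$ — is dominated by that very supremum. Taking the supremum over all admissible $k$ and multiplying by $n^{(r+1)/2}$ therefore yields a quantity bounded by $n^{(r+1)/2}\sup_{\beta\in K}\sup_{k\in\Z}|\cdots|\toas 0$, which is~\eqref{eq:L_n_uniform_exp}. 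The deterministic expansion~\eqref{eq:EL_n_uniform_exp} follows in exactly the same way from Proposition~\ref{prop:expansion_expected}, whose $o$-term is already uniform in $\beta\in K\subset\cD_\varphi$ and $k\in\Z$.

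I do not expect a genuine obstacle here: the argument is a specialization of a uniform statement. The only steps needing a word of care are (i) the two algebraic identities $x_n(k)=0$ and $\beta k-\varphi(\beta)n=nI(k/n)$, which are immediate from the definitions of $x_n$, $I$, and $\beta(\cdot)$; (ii) the logical move of letting the otherwise free parameter $\beta$ in the uniform bound depend on $k$, which is legitimate precisely because Theorem~\ref{theo:asympt_expansion_BRW} controls $\beta$ and $k$ simultaneously rather than in an iterated fashion; and (iii) the observation that $\sigma(\beta(k/n))=\sqrt{\varphi''(\beta(k/n))}$ stays bounded away from $0$ on the compact $K$ (since $\varphi''>0$ is continuous there), so the division by $\sigma(\beta(k/n))$ is harmless and does not affect the order of the error term.
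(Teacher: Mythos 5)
Your proposal is correct and is essentially the paper's own argument: the corollary is obtained exactly by substituting $\beta=\beta(\frac kn)$ into Theorem~\ref{theo:asympt_expansion_BRW} and Proposition~\ref{prop:expansion_expected}, using the identities $x_n(k)=0$ and $\beta k-\varphi(\beta)n=nI(\frac kn)$, and the fact that the error terms there are uniform jointly in $\beta\in K$ and $k\in\Z$, which is precisely what legitimizes letting $\beta$ depend on $k$.
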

The expansion of $\E L_n(k)$ is well known because it is a classical formula for the precise large deviations of sums of i.i.d.\ random variables; see~\cite{blackwell_hodges,bahadur_rao,petrov_ld}. Note that the terms with odd $j$ in~\eqref{eq:L_n_uniform_exp} and~\eqref{eq:EL_n_uniform_exp} vanish; see Remark~\ref{rem:odd} below.
\begin{remark}
By taking a linear combination of~\eqref{eq:L_n_uniform_exp} and~\eqref{eq:EL_n_uniform_exp} we obtain an expansion of
$$
\eee^{nI(\frac kn)} \left(L_n(k)- W_\infty\Bigl(\beta\Bigl(\frac kn\Bigr)\Bigr) \E L_n(k)\right)
$$
which looks exactly as~\eqref{eq:L_n_uniform_exp} but with $F_j$ replaced by $F_j^{\circ}$.
\end{remark}

\begin{figure}[!htbp]
\includegraphics[width=0.49\textwidth]{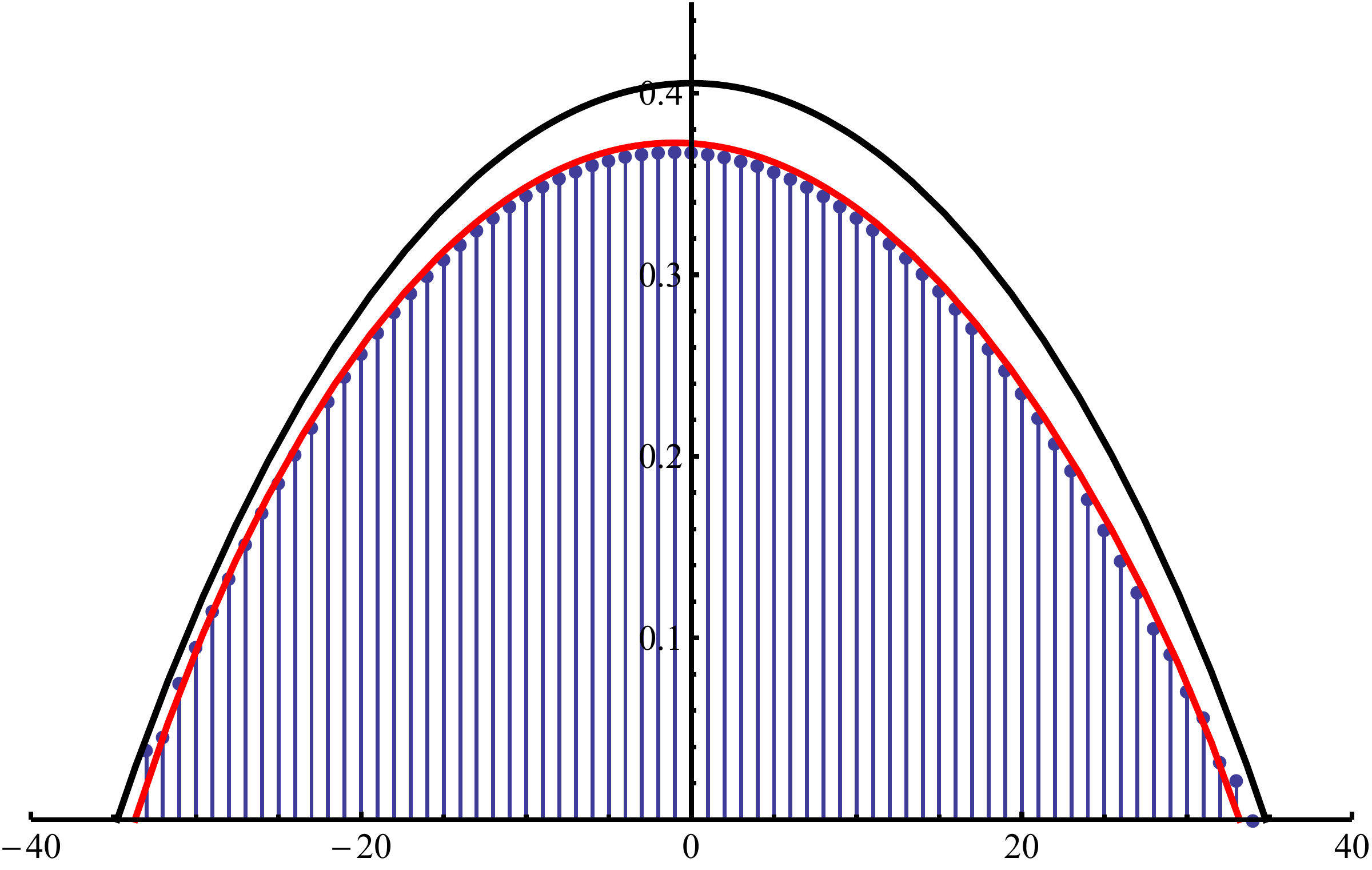}
\caption
{\small A realization of the log-profile $k\mapsto \frac 1n \log L_n(k)$ of a branching random walk (blue), the information function $k\mapsto -I(\frac kn)$ (black), and the approximation obtained by taking the logarithm in~\eqref{eq:Bahadur_Rao_BRW1} (red) at time $n=50$. The realization is the same as in Figure~\ref{bild:BRW_CLT}.}
\label{bild:BRW_LDP}
\end{figure}

Taking $r=0$ in the above theorem yields the following known result~\cite{biggins_growth_rates,biggins_uniform}; see Figure~\ref{bild:BRW_LDP}. In the context of random trees, results similar to Corollary~\ref{cor:L_n_divided_by_EL_n} have been obtained by \citet{chauvin_drmota_jabbour}, \citet{drmota_janson_neininger}, \citet[Theorem~3.1]{chauvin_etal} and~\citet{sulzbach}.
\begin{corollary}\label{cor:L_n_divided_by_EL_n}
Let $K$ be a compact subset of the interval $(\beta_-,\beta_+)$ (for~\eqref{eq:Bahadur_Rao_BRW1}), resp.\ $\cD_\varphi$ (for~\eqref{eq:Bahadur_Rao_BRW2}). Then,
\begin{align}
&\sup_{k\in\Z\colon \beta(\frac kn) \in K} \left|  \sqrt {2\pi n}\, \sigma\Bigl(\beta\Bigl(\frac kn\Bigr)\Bigr)\, \eee^{nI(\frac kn)}\, L_n(k) - W_\infty\Bigl(\beta\Bigl(\frac kn\Bigr)\Bigr) \right| \toas 0, \label{eq:Bahadur_Rao_BRW1}\\
&\sup_{k\in\Z\colon \beta(\frac kn) \in K} \left|  \sqrt {2\pi n}\, \sigma\Bigl(\beta\Bigl(\frac kn\Bigr)\Bigr)\, \eee^{nI(\frac kn)}\, \E L_n(k) - 1\right| \ton 0. \label{eq:Bahadur_Rao_BRW2}
\end{align}
\end{corollary}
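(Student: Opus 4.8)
\medskip

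The plan is to deduce both displays directly from Corollary~\ref{cor:asympt_expansion_BRW_optimal} in the case $r=0$. Recall from~\eqref{eq:F_0_BRW} that $F_0(x;\beta)=W_\infty(\beta)$, and that $q_0\equiv 1$ (see~\eqref{eq:asympt_expansion_random_walk} and the formula $q_0(x)=1$), so in particular $q_0(0;\beta)=1$. Hence the $r=0$ instances of~\eqref{eq:L_n_uniform_exp} and~\eqref{eq:EL_n_uniform_exp} already have exactly the form needed; it only remains to rescale and to control the error term uniformly.

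Concretely, for~\eqref{eq:Bahadur_Rao_BRW1} I would first write~\eqref{eq:L_n_uniform_exp} with $r=0$ as
\begin{equation*}
\eee^{nI(\frac kn)} L_n(k) = \frac{W_\infty(\beta(\frac kn))}{\sigma(\beta(\frac kn))\sqrt{2\pi n}} + o\!\left(n^{-1/2}\right) \quad\text{a.s.},
\end{equation*}
where, by Corollary~\ref{cor:asympt_expansion_BRW_optimal}, the $o$-term is uniform over all $k\in\Z$ for which $\beta(\frac kn)$ exists and lies in the fixed compact set $K\subset(\beta_-,\beta_+)$ --- which is precisely the index set of the supremum in~\eqref{eq:Bahadur_Rao_BRW1}. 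Multiplying through by $\sqrt{2\pi n}\,\sigma(\beta(\frac kn))$ yields
\begin{equation*}
\sqrt{2\pi n}\,\sigma\bigl(\beta(\tfrac kn)\bigr)\,\eee^{nI(\frac kn)} L_n(k) - W_\infty\bigl(\beta(\tfrac kn)\bigr) = \sqrt{2\pi n}\,\sigma\bigl(\beta(\tfrac kn)\bigr)\cdot o\!\left(n^{-1/2}\right).
\end{equation*}
Since $\sigma=\sqrt{\varphi''}$ is continuous on $\cD_\varphi\supset K$, we have $M_K:=\sup_{\beta\in K}\sigma(\beta)<\infty$, so the right-hand side is bounded in absolute value by $M_K\sqrt{2\pi n}\cdot o(n^{-1/2})=o(1)$, uniformly over the relevant $k$. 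Taking the supremum gives~\eqref{eq:Bahadur_Rao_BRW1}.

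The argument for~\eqref{eq:Bahadur_Rao_BRW2} is the same, now starting from~\eqref{eq:EL_n_uniform_exp} with $r=0$ and $K\subset\cD_\varphi$: the leading term is $1/(\sigma(\beta(\frac kn))\sqrt{2\pi n})$ because $q_0(0;\beta)=1$, and multiplying by $\sqrt{2\pi n}\,\sigma(\beta(\frac kn))$ together with the same bound on $\sigma$ over $K$ produces the claim; there are no random terms here, so the convergence is deterministic, consistent with the use of $\ton$ rather than $\toas$. I do not expect any real obstacle: the whole statement is an immediate consequence of Corollary~\ref{cor:asympt_expansion_BRW_optimal}, and the only (routine) point is the passage from an $o(n^{-1/2})$ error to an $o(1)$ error after multiplication by $\sqrt{2\pi n}\,\sigma$, which is handled by the uniform boundedness of $\sigma$ on the compact parameter set $K$.
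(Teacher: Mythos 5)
Your argument is correct and is exactly the paper's route: the corollary is obtained by setting $r=0$ in Corollary~\ref{cor:asympt_expansion_BRW_optimal}, using $F_0(0;\beta)=W_\infty(\beta)$ and $q_0(0;\beta)=1$, and absorbing the uniform $o(n^{-1/2})$ error after multiplication by $\sqrt{2\pi n}\,\sigma(\beta(\frac kn))$, with $\sigma$ bounded on the compact set $K$. Nothing further is needed.
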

If $k=k_n$ is an integer sequence such that $\lim_{n\to\infty} k_n/n = \varphi'(\beta)$ with some $\beta\in (\beta_-,\beta_+)$ (for~\eqref{eq:Bahadur_Rao01}) or $\beta\in\cD_\varphi$ (for~\eqref{eq:Bahadur_Rao02}), then $\beta(\frac {k_n}n) \to \beta$ and we obtain that
\begin{align}
&\sqrt{2\pi \varphi''(\beta) n} \, \eee^{n I(\frac{k_n}{n})} L_n(k_n) \toas  W_\infty (\beta), \label{eq:Bahadur_Rao01}
\\
&\sqrt{2\pi \varphi''(\beta) n} \, \eee^{n I(\frac{k_n}{n})} \E L_n(k_n) \ton  1. \label{eq:Bahadur_Rao02}
\end{align}

Equation~\eqref{eq:L_n_uniform_exp} of Corollary~\ref{cor:asympt_expansion_BRW_optimal} holds uniformly in $k\in\Z$ as long as $\frac kn$ stays in the interval  $(\varphi'(\beta_-), \varphi'(\beta_+))$ and remains bounded away from its ends.  Here, both derivatives are understood as the corresponding one-sided limits and are allowed to be infinite. One may ask whether the whole range of the BRW is covered or just some part of it. A basic result due to~\citet{biggins_chernov}, see also~\cite{biggins_branching_out}, describes the asymptotic range of the branching random walk:
\begin{equation}\label{eq:chernoff_BRW}
\frac 1n \max_{i=1,\ldots, N_n} z_{i,n} \toas \Gamma_+,
\quad
\frac 1n \min_{i=1,\ldots, N_n} z_{i,n} \toas \Gamma_-,
\end{equation}
where
\begin{equation}\label{eq:Gamma_+_Gamma_-}
\Gamma_+ =  \inf_{\beta>0} \frac{\varphi(\beta)}{\beta}>\varphi'(0),
\quad
\Gamma_- =  \sup_{\beta<0} \frac{\varphi(\beta)}{\beta}<\varphi'(0).
\end{equation}
If we additionally assume that $\varphi$ is finite everywhere on $\R$ (rather than on some interval) or that the support of the intensity measure $\vartheta_1$ is finite, then it is easy to prove (see, e.g.,\ \cite{petrov_ld}) that $\varphi'(\beta_-)=\Gamma_-$ and $\varphi'(\beta_+)=\Gamma_+$. In this case, Equation~\eqref{eq:L_n_uniform_exp} of Corollary~\ref{cor:asympt_expansion_BRW_optimal} covers the whole range of the ``central order statistics'' of the BRW. The ``extremal'' and ``intermediate'' order statistics at $\Gamma_\pm n \mp o(n)$ are not covered.  However, in some (rather exotic) examples it is possible that $(\varphi'(\beta_-), \varphi'(\beta_+))$ is a strict subinterval of $(\Gamma_-, \Gamma_+)$.

\begin{example}[see~\cite{petrov_ld}]
Consider a BRW for which $\vartheta_1(\{k\}) = c \eee^{-k}/k^3$ for $k\in\N$ and $\vartheta_1(\{k\})=0$ otherwise. Here, $c>0$ is a parameter. Then $\varphi(\beta)$ is finite for $\beta\leq 1$ and moreover, the left derivative $\varphi'(1-):=\lim_{\beta\uparrow 1}\varphi'(\beta)$ is also finite, whereas for $\beta>1$ one has $\varphi(\beta)=+\infty$.
\begin{center}
\includegraphics[width=0.45\textwidth]{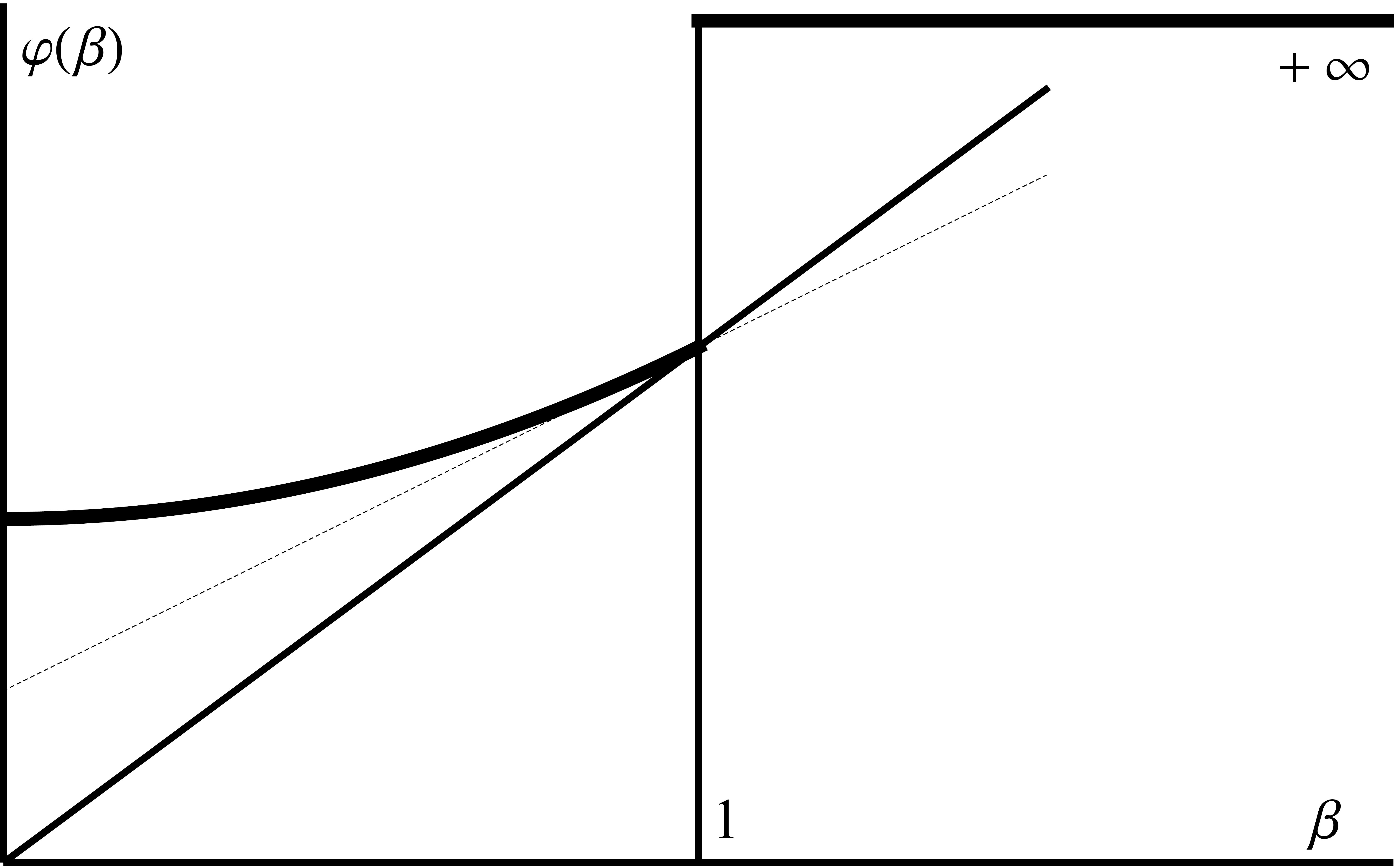}
\end{center}
If $c$ is sufficiently large, then the BRW is supercritical and we have $\varphi'(1)<\varphi(1)$. This means that $\beta_+=1$, whereas  $\Gamma_+=\varphi(1)$. Hence $\Gamma_+$ is strictly larger than $\varphi'(\beta_+)$.
\end{example}

\subsection{Continuous-time branching random walks}\label{subsec:cont_time_BRW}
All results of this paper apply to \textit{continuous-time} branching random walks on $\Z$ which are defined as follows. At time $0$ one particle appears at position $0$. After an exponential time with parameter $\lambda>0$, the particle disappears and at the same moment of time it is replaced by a random cluster of particles whose displacements w.r.t.\ the original particle are distributed according to some fixed point process $\zeta$ on $\Z$. The new-born particles behave in the same way as the original particle. All the random mechanisms involved are assumed to be independent.  Denote the number of particles at time $t\geq 0$ by $N_t$ and  note that $\{N_t\colon t\geq 0\}$ is a branching (Markov) process in continuous time; see~\cite[Chapter~V]{harris_book}. Note that the law of the continuous-time BRW is uniquely determined by the following two parameters: the intensity $\lambda$ and the law of the point process $\zeta$.
The occupation number $L_t(k)$ is defined as the number of particles located at site $k\in\Z$ at time $t\geq 0$. If we restrict the time $t$ to integer values only, we obtain a discrete-time BRW called the ``discrete skeleton'' of the original continuous-time BRW. If Assumptions A--E of Section~\ref{subsec:BRW_def_assumpt} hold for the discrete skeleton, then all the results of the present paper can be translated in an evident way to the continuous-time setting by replacing $n\in\N_0$ by $t\geq 0$. Note, however, that one has to be careful whenever the arithmetic properties of $\varphi'(0)$ are involved; see Sections~\ref{subsec:xia_chen}, \ref{subsec:width_mode}.  The proofs require only straightforward modifications.

\subsection{Strong limit theorems for the occupation numbers \texorpdfstring{$L_n(k_n)$}{Ln(kn)}}\label{subsec:xia_chen}
Recall that $L_n(k)$ denotes the number of particles located at time $n$ at $k\in\Z$. Let us take an integer sequence $k=k_n$ which behaves in some regular way.  We ask whether the random variables $L_n(k_n)$ have a non-degenerate a.s.\ limit, after an appropriate affine normalization. The next proposition is known and follows immediately from~\eqref{eq:Bahadur_Rao01} and~\eqref{eq:Bahadur_Rao02}. 
\begin{proposition}\label{prop:L_n_divided_EL_n}
Consider a branching random walk satisfying Assumptions A--E. Let $k_n$ be an integer sequence such that $k_n \sim \varphi'(\beta )n$ for some $\beta \in (\beta_-,\beta_+)$. Then we have
\begin{equation}\label{eq:L_n_lim_distr_1}
\frac{L_n(k_n)}{\E L_n(k_n)}
\toas
W_{\infty} (\beta ).
\end{equation}
\end{proposition}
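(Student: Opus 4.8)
The plan is to deduce Proposition~\ref{prop:L_n_divided_EL_n} directly from Corollary~\ref{cor:L_n_divided_by_EL_n}, which is itself the $r=0$ case of the uniform expansions in Corollary~\ref{cor:asympt_expansion_BRW_optimal}. Concretely, I would first observe that the hypothesis $k_n \sim \varphi'(\beta)n$ for a fixed $\beta\in(\beta_-,\beta_+)$ forces the tilting parameter $\beta(k_n/n)$ — the solution of $\varphi'(\beta(k_n/n)) = k_n/n$ — to exist for all large $n$ and to converge to $\beta$. This is because $\varphi$ is strictly convex and $C^\infty$ on $\cD_\varphi$, so $\varphi'$ is a strictly increasing homeomorphism from $(\beta_-,\beta_+)$ onto $(\varphi'(\beta_-),\varphi'(\beta_+))$; since $k_n/n \to \varphi'(\beta)$ lies in the interior of this image, $\beta(k_n/n)$ is well-defined for large $n$ and $\beta(k_n/n) = (\varphi')^{-1}(k_n/n) \to (\varphi')^{-1}(\varphi'(\beta)) = \beta$ by continuity of the inverse.

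Next I would fix a compact interval $K\subset(\beta_-,\beta_+)$ containing $\beta$ in its interior; then $\beta(k_n/n)\in K$ for all large $n$, so the uniform statements~\eqref{eq:Bahadur_Rao_BRW1} and~\eqref{eq:Bahadur_Rao_BRW2} apply along the sequence $k=k_n$. From~\eqref{eq:Bahadur_Rao_BRW1} we get
\begin{equation*}
\sqrt{2\pi n}\,\sigma\bigl(\beta(k_n/n)\bigr)\,\eee^{nI(k_n/n)}L_n(k_n) \toas W_\infty\bigl(\beta(k_n/n)\bigr),
\end{equation*}
and from~\eqref{eq:Bahadur_Rao_BRW2},
\begin{equation*}
\sqrt{2\pi n}\,\sigma\bigl(\beta(k_n/n)\bigr)\,\eee^{nI(k_n/n)}\,\E L_n(k_n) \ton 1.
\end{equation*}
Since $\E L_n(k_n)>0$ for all large $n$ (its rescaled version converges to $1$), I can divide the first display by the second: the deterministic prefactors $\sqrt{2\pi n}\,\sigma(\beta(k_n/n))\,\eee^{nI(k_n/n)}$ cancel exactly, giving
\begin{equation*}
\frac{L_n(k_n)}{\E L_n(k_n)} \toas \frac{W_\infty\bigl(\beta(k_n/n)\bigr)}{1}.
\end{equation*}
Finally, because $W_\infty$ is (a.s.) a continuous — indeed analytic — random function on the complex neighborhood $V\supset(\beta_-,\beta_+)$, and $\beta(k_n/n)\to\beta$, we have $W_\infty(\beta(k_n/n)) \to W_\infty(\beta)$ a.s. Combining the last two displays yields $L_n(k_n)/\E L_n(k_n) \toas W_\infty(\beta)$, which is~\eqref{eq:L_n_lim_distr_1}.

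There is no serious obstacle here; this is a clean packaging of already-established facts. The one point that deserves a moment of care is the division step: one must make sure the a.s.\ convergence of a quotient follows from a.s.\ convergence of numerator and denominator with a nonzero, in fact strictly positive, limit for the denominator — which is immediate once we note $\E L_n(k_n)$ is eventually positive and its normalization tends to the deterministic constant $1$. A secondary point is the interplay of the two modes of convergence ($\toas$ for $L_n$ versus the purely deterministic $\ton$ for $\E L_n$), but since a deterministic convergent sequence converges a.s., there is no issue in combining them on the same probability space. Everything else is continuity of $\varphi'$, its inverse, and $W_\infty$.
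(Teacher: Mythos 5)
Your proposal is correct and follows essentially the same route as the paper: the paper derives \eqref{eq:L_n_lim_distr_1} immediately by dividing \eqref{eq:Bahadur_Rao01} by \eqref{eq:Bahadur_Rao02}, which are themselves obtained from the uniform statements \eqref{eq:Bahadur_Rao_BRW1}--\eqref{eq:Bahadur_Rao_BRW2} via exactly your argument that $\beta(k_n/n)\to\beta$ and continuity of $\sigma$ and $W_\infty$. Your extra care about the division step and the eventual positivity of $\E L_n(k_n)$ is fine but adds nothing beyond what the paper treats as immediate.
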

Next we ask whether we can obtain more refined limit theorems for the ``centered'' variables
\begin{equation}
L_n^{\circ}(k_n) := L_n(k_n) - W_\infty(\beta)\, \E L_n(k_n).
\end{equation}
This question is especially natural if $\beta=0$ and any particle in the BRW generates the same number $m\in\{2,3,\ldots\}$ of descendants. Then $W_\infty(0)=1$ and hence the limit random variable provided by Proposition~\ref{prop:L_n_divided_EL_n} is a.s.\ constant. The same phenomenon occurs in the setting of random trees, where the natural analogue of $W_\infty(0)$ is equal to $1$.

We consider an integer sequence $k_n$ which, for some $\beta\in (\beta_-,\beta_+)$, is represented in the form
$k_n=\varphi'(\beta) n + c_n$. The result will depend on the asymptotic behavior of $c_n$.
Recall that $\sigma^2(\beta) = \varphi''(\beta)$.

\begin{theorem}\label{theo:L_n_lim_distr_2_new}
Consider a branching random walk satisfying Assumptions A--E.
Let $k_n$ be an integer sequence such that
\begin{equation*}
   k_n= \varphi'(\beta)\, n +c_n
\end{equation*}
for some $\beta\in (\beta_-,\beta_+)$ and some $c_n=O(\sqrt{n})$.

\vspace*{1mm}

\emph{(a)} If $c_n= \alpha \sigma(\beta) \sqrt {n} + o(\sqrt n)$ for some $\alpha\in\bR$, then
\begin{equation}\label{eq:L_n_lim_distr_2}
      n \eee^{\beta k_n - \varphi(\beta)n}  L_n^{\circ}(k_n) \toas
         \frac{1}{\sqrt{2\pi}\sigma^2(\beta)} \alpha \eee^{-\frac {1}{2}\alpha^2 } W_\infty'(\beta).
\end{equation}

\emph{(b)} If $\,\lim_{n\to\infty} c_n = +\infty$, but  $c_n=o(\sqrt n)$, then
\begin{equation}\label{eq:xia_chen_general1}
        n^{3/2} c_n^{-1} \eee^{\beta k_n - \varphi(\beta)n} L_n^{\circ}(k_n) \toas
                                    \frac {1}{\sqrt{2\pi} \sigma^3(\beta)}W_\infty'(\beta).
\end{equation}

\emph{(c)} If $c_n$ is bounded, then
\begin{equation}\label{eq:xia_chen_general}
       n^{3/2} \eee^{\beta k_n-\varphi(\beta) n} L_n^{\circ}(k_n) - R(c_n) \toas 0
\end{equation}
where
\begin{equation}
    R(c) := \frac 1 {\sqrt{2\pi}\sigma^3(\beta)} \left(W_\infty'(\beta) \left( c
      + \frac{\kappa_3(\beta)}{2\sigma^2(\beta)}\right) - \frac 12 W_\infty''(\beta)\right).
\end{equation}
\end{theorem}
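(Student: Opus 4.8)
The plan is to deduce all three statements from the two-term version of Corollary~\ref{cor:asympt_expansion_BRW_without_expectation}, by substituting $k=k_n$ and then tracking orders of magnitude separately in the three regimes. Fix $\beta\in(\beta_-,\beta_+)$ and a compact interval $K\subset(\beta_-,\beta_+)$ containing $\beta$. Corollary~\ref{cor:asympt_expansion_BRW_without_expectation} with $r=2$ gives, a.s.\ and uniformly in $k\in\Z$,
\[
\eee^{\beta k-\varphi(\beta)n}L_n^{\circ}(k)
=\frac{\eee^{-\frac12 x_n^2(k)}}{\sigma(\beta)\sqrt{2\pi n}}
\left(\frac{F_1^{\circ}(x_n(k);\beta)}{\sqrt n}+\frac{F_2^{\circ}(x_n(k);\beta)}{n}\right)+o\bigl(n^{-3/2}\bigr),
\]
with $F_1^{\circ}$ and $F_2^{\circ}$ the explicit polynomials \eqref{eq:F_1_circ_BRW}--\eqref{eq:F_2_circ_BRW}. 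Now put $k=k_n=\varphi'(\beta)n+c_n$, so $x_n(k_n)=c_n/(\sigma(\beta)\sqrt n)$; since $c_n=O(\sqrt n)$ this stays bounded, hence so does $\eee^{-\frac12 x_n^2(k_n)}$. Because $F_1^{\circ}(x;\beta)=(x/\sigma(\beta))W_\infty'(\beta)$ is linear in $x$, we have the exact identity $\sqrt n\,F_1^{\circ}(x_n(k_n);\beta)=(c_n/\sigma^2(\beta))W_\infty'(\beta)$, and this is what produces the $c_n$-dependence below.

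In case (a), $x_n(k_n)\to\alpha$, so $\eee^{-\frac12 x_n^2(k_n)}\to\eee^{-\alpha^2/2}$ and $F_1^{\circ}(x_n(k_n);\beta)\to(\alpha/\sigma(\beta))W_\infty'(\beta)$. Multiplying the displayed expansion by $n$ sends the $F_2^{\circ}$-term and the remainder, now $o(n^{-1/2})$, to $0$, while the $F_1^{\circ}$-term tends to $\sigma(\beta)^{-1}(2\pi)^{-1/2}\eee^{-\alpha^2/2}(\alpha/\sigma(\beta))W_\infty'(\beta)$, which is \eqref{eq:L_n_lim_distr_2}. In case (b), $c_n=o(\sqrt n)$ forces $x_n(k_n)\to0$, so $\eee^{-\frac12 x_n^2(k_n)}\to1$ and, by $\Herm_j(x_n(k_n))\to\Herm_j(0)$, the quantity $F_2^{\circ}(x_n(k_n);\beta)$ converges to a bounded random variable. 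Multiplying by $n^{3/2}c_n^{-1}$: the remainder becomes $c_n^{-1}\,o(1)\to0$ (since $c_n\to\infty$), the $F_2^{\circ}$-term becomes $c_n^{-1}\,O(1)\to0$, and the $F_1^{\circ}$-term equals $\sigma(\beta)^{-1}(2\pi)^{-1/2}\eee^{-\frac12 x_n^2(k_n)}\,\sigma^{-2}(\beta)W_\infty'(\beta)$, which tends to $(2\pi)^{-1/2}\sigma^{-3}(\beta)W_\infty'(\beta)$, i.e.\ \eqref{eq:xia_chen_general1}. Finally, in case (c) the bounded $c_n$ again gives $x_n(k_n)\to0$, hence $\eee^{-\frac12 x_n^2(k_n)}\to1$ and $F_2^{\circ}(x_n(k_n);\beta)\to F_2^{\circ}(0;\beta)=\frac{\kappa_3(\beta)}{2\sigma^4(\beta)}W_\infty'(\beta)-\frac{1}{2\sigma^2(\beta)}W_\infty''(\beta)$, using $\Herm_4(0)=3$ and $\Herm_2(0)=-1$ from \eqref{eq:Herm1}--\eqref{eq:Herm2}. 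Multiplying by $n^{3/2}$ kills the remainder and leaves $\sigma(\beta)^{-1}(2\pi)^{-1/2}\bigl(\sqrt n\,F_1^{\circ}(x_n(k_n);\beta)+F_2^{\circ}(0;\beta)\bigr)+o(1)$; substituting $\sqrt n\,F_1^{\circ}(x_n(k_n);\beta)=(c_n/\sigma^2(\beta))W_\infty'(\beta)$ and collecting the powers of $\sigma(\beta)$, one checks this equals $R(c_n)+o(1)$, which is \eqref{eq:xia_chen_general}.

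I do not expect a genuine obstacle: once Corollary~\ref{cor:asympt_expansion_BRW_without_expectation} is available, the theorem is a regime-wise reading of a two-term expansion. The points that need care are: (i) keeping $r=2$ (rather than $r=1$) in cases (b) and (c), since after the normalizations $n^{3/2}/c_n$ and $n^{3/2}$ the first-order remainder $o(n^{-1})$ would be of the same size as the surviving term, whereas $n^{3/2}\cdot o(n^{-3/2})=o(1)$; (ii) retaining the exact value of $c_n$ in case (c) instead of passing to a subsequential limit, which is legitimate precisely because $F_1^{\circ}$ is linear so that $\sqrt n\,F_1^{\circ}(x_n(k_n);\beta)$ is \emph{exactly} $(c_n/\sigma^2(\beta))W_\infty'(\beta)$; and (iii) the elementary bookkeeping of the constants coming out of $F_1^{\circ}$, $F_2^{\circ}$ and the Hermite values $\Herm_2(0),\Herm_4(0)$ against the stated limits and the definition of $R$.
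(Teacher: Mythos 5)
Your proposal is correct and follows essentially the same route as the paper: substitute $k_n$ into Corollary~\ref{cor:asympt_expansion_BRW_without_expectation}, exploit the exact linearity of $F_1^{\circ}$ to keep $c_n$, evaluate $F_2^{\circ}$ via $\Herm_2(0)=-1$, $\Herm_4(0)=3$, and read off the three regimes. The only (immaterial) difference is that you use the $r=2$ expansion in case (a) as well, where the paper uses $r=1$.
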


In the setting of random trees, \citet{fuchs_hwang_neininger} showed that the analogue of the
occupation number $L_n(k_n)$, centered by its expectation and normalized by standard deviation,
does not have a limit distribution $k_n=\varphi'(\beta) n + O(1)$. Part (c) of the above theorem clarifies
the structure of the set of a.s.\ subsequential limits in the BRW setting.

\begin{figure}
\includegraphics[width=0.99\textwidth]{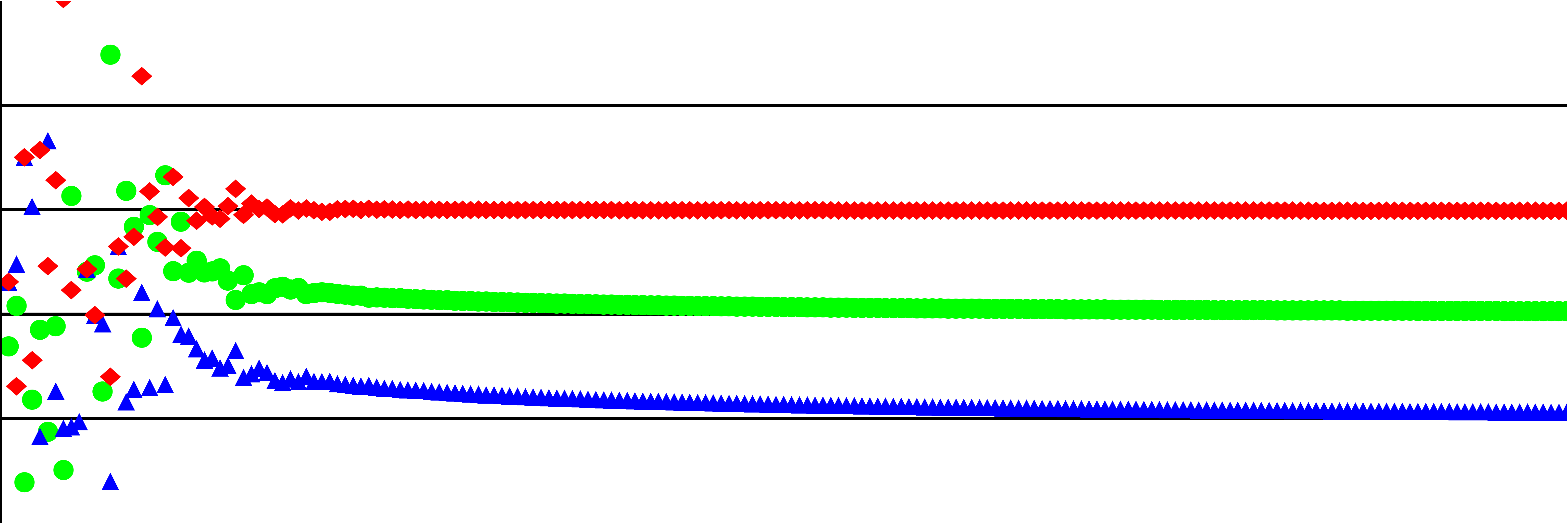}
\includegraphics[width=0.99\textwidth]{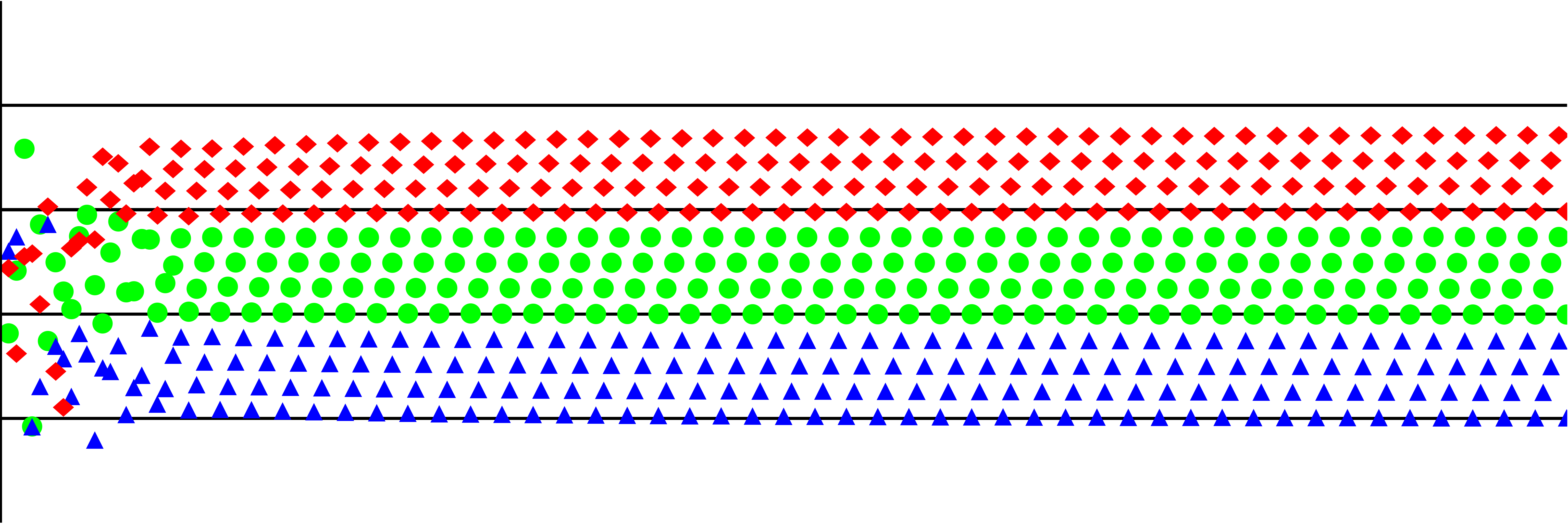}
\includegraphics[width=0.99\textwidth]{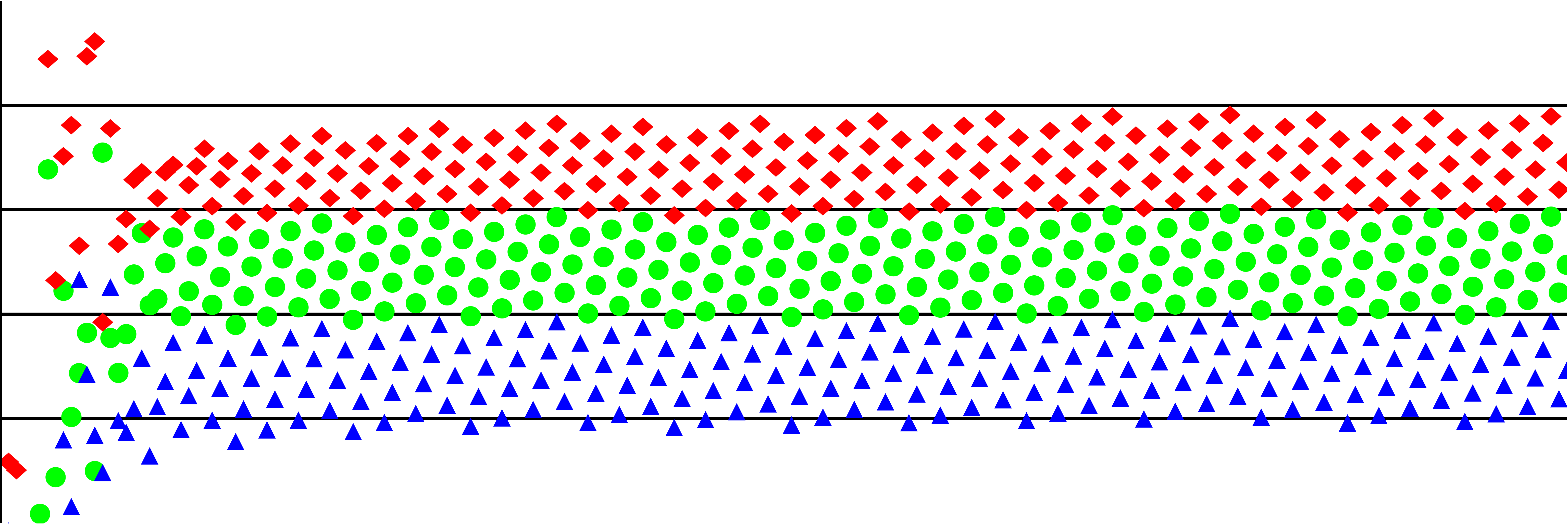}
\caption
{\small A realization of the sequence $n^{3/2} m^{-n} L_n^{\circ}(k_n)$ as a function of $n=1,\ldots,200$ for $k_n=\lfloor\varphi'(0)n\rfloor + a$ with $a=-1,0,1$ (red diamonds, green circles, blue triangles, respectively). The four black horizontal lines show the values $R(-2),R(-1), R(0), R(1)$. Top: $\varphi'(0)$ is integer, middle: $\varphi'(0)=\frac 14$ is rational, bottom: $\varphi'(0)$ is irrational.}
\label{bild:BRW_L_n_k_n}
\end{figure}

\begin{remark}
In the situation of Theorem~\ref{theo:L_n_lim_distr_2_new}\,(c), $\limsup$ and $\liminf$ of the sequence
\begin{equation}\label{eq:xia_chen_seq}
 n^{3/2}\eee^{\beta k_n - \varphi(\beta)n} L_n^{\circ}(k_n)
\end{equation}
are a.s.\ finite (but not necessarily equal to each other).
Whether or not the sequence~\eqref{eq:xia_chen_seq} has an a.s.\ limit depends on certain arithmetic issues;
see Figure~\ref{bild:BRW_L_n_k_n}. We consider the following cases.

\vspace*{1mm}
\noindent
\textit{Case 1.} If the ``drift'' $\varphi'(\beta)$ is integer, it is natural to take $k_n=\varphi'(\beta)n + a$ for some constant $a\in\Z$. In this case, $c_n = a$ which implies that $R(c_n)$ converges a.s.\ to $R(a)$ and so does the sequence~\eqref{eq:xia_chen_seq}. This case is shown on top of Figure~\ref{bild:BRW_L_n_k_n}.

\vspace*{1mm}
\noindent
\textit{Cases 2 and 3.}
If $\varphi'(\beta)$ is non-integer we cannot choose $k_n$ as in Case 1. Instead, it is natural to take $k_n=\lfloor\varphi'(\beta)n\rfloor + a$, where $a\in\Z$, which means that $c_n = a - \{\varphi'(\beta) n\}$. Here, $\lfloor\cdot\rfloor$  denotes the floor function and $\{\cdot\}$ denotes the fractional part. The limit of $c_n$ (and hence the limit of $R(c_n)$) does not exist. Instead, we can  describe the set of all a.s.\  subsequential limits of~\eqref{eq:xia_chen_seq}.

\vspace*{1mm}
\noindent
\textit{Case 2.} For non-integer rational  $\varphi'(\beta)=\frac{p}{q}$ with coprime $p$ and $q$ the set of a.s.\ subsequential limits of~\eqref{eq:xia_chen_seq} is finite and given by
\begin{equation}\label{eq:limiting_distr1}
\left\{R\left(a-\frac jq\right) \colon j=0,\ldots, q-1\right\}.
\end{equation}
In fact, over the subsequence $n_l = j + l q$, $l\in\N_0$, we have
$$
n^{3/2}\eee^{\beta k_n - \varphi(\beta)n} L_n^{\circ}(k_n) \toas R\left( a - \left\{\frac{pj}{q}\right\}\right).
$$
This case (with $q=4$) is shown in the middle of Figure~\ref{bild:BRW_L_n_k_n}.

\vspace*{1mm}
\noindent
\textit{Case 3.}
For irrational $\varphi'(\beta)$ the set of a.s.\ subsequential limits of~\eqref{eq:xia_chen_seq} can be continuously parametrized by the interval $[0,1]$:
\begin{equation}\label{eq:limiting_distr2}
\{R\left(a-z\right) \colon z\in [0,1] \}.
\end{equation}
This case is shown on the bottom of Figure~\ref{bild:BRW_L_n_k_n}.
\end{remark}

\begin{remark}
The above Cases~1 and~2 apply to BRW in discrete time only. In the case of continuous time $t\geq 0$, see Section~\ref{subsec:cont_time_BRW}, it is natural to take $k_t =\lfloor \varphi'(\beta) t \rfloor + a$ with $a\in\Z$. We have $c_t = a - \{\varphi'(\beta) t\}$ and there are two cases.

\vspace*{1mm}
\noindent
\textit{Case 1.} If $\varphi'(\beta) = 0$, then $c_t=a$ and the sequence~\eqref{eq:xia_chen_seq} has an a.s.\ limit $R(a)$.

\vspace*{1mm}
\noindent
\textit{Case 2.} If $\varphi'(\beta)\neq 0$, then the set of limit points of $c_t$ is the interval $[a-1,a]$ and the set of a.s.\ subsequential limits of~\eqref{eq:xia_chen_seq} is given by~\eqref{eq:limiting_distr2}.
\end{remark}

\begin{example}\label{ex:xia_chen}
Consider a \textit{simple symmetric} branching random walk on $\Z$. That is, if at time $n$ some particle is located at
$k\in\Z$, then at time $n+1$ it moves to one of the sites $k-1$ or $k+1$ with probability $1/2$ and generates there a
random number of offspring. The distribution of the number of offspring is assumed to have finite mean $m>1$ and finite
$p$-th moment, for some $p>1$. Note that $\varphi'(0)=0$ by symmetry. Denote by $z_{1,n}, \ldots,z_{n,N_n}$ the
positions of particles at time $n$.  Note that although Assumption~E fails to hold for the simple symmetric BRW because
the intensity measure $\vartheta_1$ is concentrated on $\{-1,1\}\subset 2\Z+1$, the transformed BRW $\frac
12(z_{i,n}+n)$, $1\leq i\leq N_n$, satisfies Assumptions A--E. Applying Theorem~\ref{theo:mode_height}\,(c) to the new
BRW and switching back to the simple symmetric BRW, we recover a result obtained by~\citet[Theorem~2.1]{chen} under a
second moment condition on $N_1$. Parts (a) and (b) of Theorem~\ref{theo:mode_height} describe further possible
asymptotic regimes that are not covered by~\cite{chen}. Our Proposition~\ref{prop:CLT_global_speed} generalizes
Theorem~2.2 of~\cite{chen} to more general BRW's.
Very recently, \citet{gao_liu} elaborated on the result of~\citet{chen} by relaxing his second moment condition on $N_1$, allowing for general displacements distributions and by considering branching random walks in random environment. Note that~\citet{gao_liu} impose a strong non-lattice assumption on their BRW's and so their results do not cover the lattice case considered here. In particular, \citet{gao_liu} obtained  non-lattice versions of Theorem~\ref{theo:mode_height}\,(c) and Proposition~\ref{prop:CLT_global_speed}.
\end{example}

\subsection{Height and mode of the branching random walk}\label{subsec:width_mode}
Define the \textit{height} $M_n$ and the \textit{mode} $u_n$ of the branching random walk profile at time $n$ by
\begin{equation}\label{eq:width_mode_def}
M_n = \max_{k\in\Z} L_n(k),
\quad
u_n =\argmax_{k\in\Z} L_n(k).
\end{equation}
In the context of profiles of random trees, similar quantities (called width and mode) were studied by~\citet{chauvin_drmota_jabbour}, \citet{katona}, \citet{drmota_hwang} and~\citet{devroye_hwang}. Here, we will consider the more general setting of branching random walks; applications to random trees are postponed to a separate paper. It is clear from the approximate Gaussianity of the profile, c.f.~\eqref{eq:local_limit_BRW}, that the mode should be near $\varphi'(0)n$, while the height should be approximately $m^n/(\sqrt{2\pi n} \, \sigma(0))$.

\begin{theorem}\label{theo:mode_height}
Consider a branching random walk satisfying Assumptions A--E.

\vspace*{1mm}

\emph{(a)} There is an a.s.\ finite random variable $N$ such that for $n>N$,
the mode $u_n$ is equal to one of the numbers $\lfloor u_n^*\rfloor$ or $\lceil u_n^*\rceil$, where
\begin{equation}\label{eq:k_n_star}
    u_n^* = \varphi'(0) n + \frac{W_\infty'(0)}{W_\infty(0)} - \frac{\kappa_3(0)}{2\sigma^2(0)}
\end{equation}
and $\lfloor\cdot\rfloor$, $\lceil\cdot\rceil$ denote the floor and the ceiling functions, respectively.

\vspace*{.5mm}

\emph{(b)} The height $M_n$ satisfies
$$
\frac{\sqrt{2\pi n}\, \sigma(0)\, M_n}{W_\infty(0) m^n} = 1 - \frac{1}{2\sigma^2(0) n}
\left( \frac{\kappa_3^2(0)}{6\sigma^4(0)} - \frac{\kappa_4(0)}{4\sigma^2(0)} + \theta_n^2
+(\log W_\infty)''(0)
\right) + o\left(\frac 1n \right),
$$
a.s.,\ where $\theta_n := \min_{k\in\Z} |u_n^*-k|$ is the distance between $u_n^*$ and the closest integer.
\end{theorem}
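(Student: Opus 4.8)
The plan is to apply Theorem~\ref{theo:asympt_expansion_BRW} with $\beta=0$ and $r=2$. Write $\sigma:=\sigma(0)$, $\kappa_j:=\kappa_j(0)=\varphi^{(j)}(0)$, $W:=W_\infty(0)>0$, $W':=W_\infty'(0)$, $W'':=W_\infty''(0)$, $x_n(k):=(k-\varphi'(0)n)/(\sigma\sqrt n)$, and set for real $x$
\[
g_n(x):=\frac{\eee^{-x^2/2}}{\sigma\sqrt{2\pi n}}\,P_n(x),\qquad P_n(x):=W+\frac{F_1(x;0)}{\sqrt n}+\frac{F_2(x;0)}{n},
\]
with $F_1,F_2$ from~\eqref{eq:F_1_BRW}--\eqref{eq:F_2_BRW}, so that $\eee^{-\varphi(0)n}L_n(k)=g_n(x_n(k))+o(n^{-3/2})$ a.s., uniformly in $k\in\Z$. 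Since $W>0$ a.s.\ and $F_1,F_2$ are fixed polynomials, $P_n\to W$ on any bounded interval, uniformly with all derivatives; hence $(\log g_n)''(x)=-1+(\log P_n)''(x)\to-1$ uniformly on $[-\tfrac14,\tfrac14]$, so for large $n$ the function $g_n$ is strictly log-concave there, with a unique maximum at an interior point $\hat x_n\to0$, at which $g_n''(\hat x_n)<0$. Solving the critical-point equation $\hat x_n=P_n'(\hat x_n)/P_n(\hat x_n)$ perturbatively, using $F_1'(0;0)=-W\kappa_3/(2\sigma^3)+W'/\sigma$ and $F_2'(0;0)=0$ (the latter because $\Herm_j'(0)=j\,\Herm_{j-1}(0)=0$ for $j=2,4,6$), gives $\hat x_n=b\,n^{-1/2}+O(n^{-3/2})$ with $b:=-\kappa_3/(2\sigma^3)+W'/(W\sigma)$, and in turn $g_n''(\hat x_n)=(-W+O(n^{-1}))/(\sigma\sqrt{2\pi n})$. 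In $k$-coordinates, the continuous maximiser of $k\mapsto g_n(x_n(k))$ is therefore $\hat k_n:=\varphi'(0)n+\sigma\sqrt n\,\hat x_n=u_n^*+O(n^{-1})$.

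For part~(a) I first localise the mode: the $r=0$ case of Theorem~\ref{theo:asympt_expansion_BRW} gives $\eee^{-\varphi(0)n}L_n(k)\le(W+o(1))\eee^{-x_n^2(k)/2}/(\sigma\sqrt{2\pi n})$ uniformly, which for $|x_n(k)|\ge\tfrac14$ is strictly below $\eee^{-\varphi(0)n}L_n(\lfloor\hat k_n\rfloor)=(W+o(1))/(\sigma\sqrt{2\pi n})$, so for all large $n$ the mode satisfies $|x_n(u_n)|<\tfrac14$. For $k$ with $|x_n(k)|<\tfrac14$ put $\psi_n(k):=g_n(x_n(k))$; viewed as a function of a real variable $\psi_n$ is unimodal with maximum at $\hat k_n$, and a second-order Taylor expansion of $g_n$ about $\hat x_n$ (where $g_n'(\hat x_n)=0$, with remainder $O(n^{-5/2})$ for arguments at distance $O(n^{-1/2})$ from $\hat x_n$) together with $g_n''(\hat x_n)=(-W+o(1))/(\sigma\sqrt{2\pi n})$ shows that $\psi_n$ drops below $\max\bigl(\psi_n(\lfloor\hat k_n\rfloor),\psi_n(\lceil\hat k_n\rceil)\bigr)$ by an amount of exact order $n^{-3/2}$ already at the integers $\lfloor\hat k_n\rfloor-1$ and $\lceil\hat k_n\rceil+1$, and hence, by unimodality, at every further integer. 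Since $\eee^{-\varphi(0)n}L_n(k)=\psi_n(k)+o(n^{-3/2})$, this $o(n^{-3/2})$ error cannot move the discrete argmax, so $u_n\in\{\lfloor\hat k_n\rfloor,\lceil\hat k_n\rceil\}$ for large $n$. Finally, $|\hat k_n-u_n^*|=O(n^{-1})$ forces $\{\lfloor\hat k_n\rfloor,\lceil\hat k_n\rceil\}=\{\lfloor u_n^*\rfloor,\lceil u_n^*\rceil\}$ unless the closed interval between $\hat k_n$ and $u_n^*$ contains an integer $m$, in which case $|m-\hat k_n|=O(n^{-1})$ and the same Taylor comparison yields $\psi_n(m)>\psi_n(m\pm1)$, so $u_n=m$; in both cases $u_n\in\{\lfloor u_n^*\rfloor,\lceil u_n^*\rceil\}$, which proves~(a) with $N$ the (a.s.\ finite) time beyond which all of the above holds.

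For part~(b), by~(a) we have $M_n=L_n(u_n)=\max\bigl(L_n(\lfloor\hat k_n\rfloor),L_n(\lceil\hat k_n\rceil)\bigr)$ for $n>N$, so $\eee^{-\varphi(0)n}M_n=\max\bigl(g_n(x_n(\lfloor\hat k_n\rfloor)),g_n(x_n(\lceil\hat k_n\rceil))\bigr)+o(n^{-3/2})$. Taylor-expanding $g_n$ about $\hat x_n$ as above, the two values are $g_n(\hat x_n)+\tfrac12 g_n''(\hat x_n)\{\hat k_n\}^2/(\sigma^2n)+O(n^{-5/2})$ and the same with $(1-\{\hat k_n\})^2$; since $g_n''(\hat x_n)<0$, their maximum is $g_n(\hat x_n)+\tfrac12 g_n''(\hat x_n)\,\theta_n'^2/(\sigma^2n)+O(n^{-5/2})$, where $\theta_n':=\dist(\hat k_n,\Z)$, and $\theta_n'^2=\theta_n^2+O(n^{-1})$ because $|\hat k_n-u_n^*|=O(n^{-1})$. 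Using $g_n''(\hat x_n)=(-W+O(n^{-1}))/(\sigma\sqrt{2\pi n})$ and expanding $g_n(\hat x_n)=\eee^{-\hat x_n^2/2}P_n(\hat x_n)/(\sigma\sqrt{2\pi n})$ to relative order $n^{-1}$ via $\hat x_n=b\,n^{-1/2}+O(n^{-3/2})$ (which gives $\sqrt{2\pi n}\,\sigma\,g_n(\hat x_n)/W=1+b^2/(2n)+F_2(0;0)/(Wn)+O(n^{-3/2})$), one obtains
\[
\frac{\sqrt{2\pi n}\,\sigma\,M_n}{W m^n}=1+\frac1n\Bigl(\frac{b^2}{2}+\frac{F_2(0;0)}{W}-\frac{\theta_n^2}{2\sigma^2}\Bigr)+o\Bigl(\frac1n\Bigr)\quad\text{a.s.}
\]
The stated formula then follows from the algebraic identity $\tfrac12 b^2+F_2(0;0)/W=-\tfrac{1}{2\sigma^2}\bigl(\kappa_3^2/(6\sigma^4)-\kappa_4/(4\sigma^2)+(\log W_\infty)''(0)\bigr)$, which is verified by inserting $b=-\kappa_3/(2\sigma^3)+W'/(W\sigma)$ and $F_2(0;0)=W\bigl(\kappa_4/(8\sigma^4)-5\kappa_3^2/(24\sigma^6)\bigr)+\kappa_3W'/(2\sigma^4)-W''/(2\sigma^2)$ (from $\Herm_2(0)=-1$, $\Herm_4(0)=3$, $\Herm_6(0)=-15$) and using $(\log W_\infty)''(0)=W''/W-(W'/W)^2$; the cross terms in $\kappa_3W'$ cancel and $\kappa_3^2/(8\sigma^6)-5\kappa_3^2/(24\sigma^6)=-\kappa_3^2/(12\sigma^6)$.

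The main obstacle is the rounding argument in part~(a): Theorem~\ref{theo:asympt_expansion_BRW} controls $\eee^{-\varphi(0)n}L_n(k)$ only up to $o(n^{-3/2})$, which is exactly the order of the second differences of the profile near its peak, so one must argue carefully that this error cannot displace the discrete argmax, and must treat separately the case — which genuinely occurs when $\varphi'(0)$ is irrational, by equidistribution of $\{\varphi'(0)n\}$ — in which $u_n^*$ comes within $O(n^{-1})$ of an integer. All the rest is Taylor-expansion bookkeeping; here the vanishing of $F_2'(0;0)$ (equivalently $\Herm_j'(0)=0$ for even $j$) is what makes $\hat k_n$ and $u_n^*$ agree to within $O(n^{-1})$ rather than only $O(n^{-1/2})$, which is precisely the precision the statement of~(a) requires.
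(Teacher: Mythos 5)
Your proof is correct, and its computational core coincides with the paper's: both apply Theorem~\ref{theo:asympt_expansion_BRW} with $\beta=0$, $r=2$, identify the same peak shift (your $\sigma(0)b$ is exactly the paper's $a_*=W_\infty'(0)/W_\infty(0)-\kappa_3(0)/(2\sigma^2(0))$), and derive (b) by evaluating the expansion at the two integers adjacent to the peak, with the identical Hermite-value algebra ($\Herm_2(0)=-1$, $\Herm_4(0)=3$, $\Herm_6(0)=-15$; your $F_2(0;0)$ is the paper's constant $C$). Where you genuinely diverge is in how the discrete argmax is pinned down and how the tails are dismissed. The paper (Steps~1--5 of its proof) restricts to $|a|<n^{1/4-\eps}$, replaces the profile by a quadratic in $a$, compares consecutive occupation numbers $L_n(k+1)-L_n(k)$ via a sign argument, and then needs three separate regimes ($|a|>\sigma\sqrt n$, $\sigma n^{3/8+\eps}<|a|\le\sigma\sqrt n$, $\sigma n^{1/4-2\eps}<|a|\le\sigma n^{3/8+\eps}$, using the $r=0,1,2$ expansions respectively) to show a uniform deficit of order $B/n$ outside the central window. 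You instead keep the full approximant $g_n(x)=\eee^{-x^2/2}P_n(x)/(\sigma\sqrt{2\pi n})$, prove its strict log-concavity on a fixed window, locate its exact maximiser $\hat x_n$, and use one Taylor-plus-unimodality comparison, with a single coarse localisation $|x_n(k)|\ge\tfrac14$ handled by the $r=0$ expansion; this buys you a cleaner tail argument (no multi-range analysis) and an explicit, careful treatment of the case where an integer falls between $\hat k_n$ and $u_n^*$, which the paper's first-difference argument absorbs implicitly (only the signs at $a\le a_*-1$ and $a\ge a_*$ are needed there). Two small imprecisions, neither a gap: in the localisation step the $r=0$ error is additive $o(n^{-1/2})$ uniformly in $k$, not relative, so the bound should be written as $\eee^{-\varphi(0)n}L_n(k)\le W\eee^{-x_n^2(k)/2}/(\sigma\sqrt{2\pi n})+o(n^{-1/2})$ rather than with a multiplicative $(W+o(1))$ — your conclusion for $|x_n(k)|\ge\tfrac14$ survives unchanged; and the precision $\hat x_n=b n^{-1/2}+O(n^{-3/2})$ uses not only $F_2'(0;0)=0$ but also $F_1''(0;0)=0$, which holds because $F_1(\cdot;0)$ is odd ($\Herm_3$ and $x$), so the claim is correct as stated.
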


The second part of the above theorem obviously implies that
\begin{equation}\label{eq:Max_weak}
   \frac{\sqrt{2\pi n} \, \sigma(0)\, M_n} {m^{n}}  \toas W_\infty(0).
\end{equation}
In the context of random trees, similar results were obtained
in~\cite{chauvin_drmota_jabbour,katona,drmota_hwang,devroye_hwang}. It has been
asked by~\citet[Section~5]{drmota_hwang} whether $M_n$,
after appropriate centering and scaling, converges in distribution,
and whether the limit distribution is (in our notation) the distribution of the logarithmic derivative
$W_\infty'(0)/W_\infty(0)$. Note that the analogue of $W_\infty(0)$ equals $1$ in the setting
of random trees, so that~\eqref{eq:Max_weak} gives an a.s.\ constant, degenerate limit.
Theorem~\ref{theo:mode_height}\,(b) answers these questions in the context of the BRW. In fact, we even have a
limit theorem on the a.s.\ behavior of $M_n$. In our result, the second (rather than the first)
logarithmic derivative of $W_\infty$ at $0$ appears.

\begin{remark} The first part of
Theorem~\ref{theo:mode_height} does not say \textit{which} of the numbers, $\lfloor u_n^*\rfloor$ or $\lceil u_n^*\rceil$, is the mode. From the proof (which will be given in Section~\ref{subsec:proof_mode_width}) it will be clear that for every given $\eps>0$ we can choose the random variable $N$ such that for every $n>N$ satisfying $\min_{k\in\Z} |u_n-k-\frac 12|>\eps$  the mode is equal to $u_n^*$ rounded to the nearest integer. In the case when $u_n$ is sufficiently close to a half-integer $k+\frac 12$, $k\in\Z$, further terms of the asymptotic expansion are needed to tell whether $u_n$ is $\lfloor u_n^*\rfloor$ or $\lceil u_n^*\rceil$.
It is natural to conjecture that the random variable $W_\infty'(0)/W_\infty(0)$ is absolutely continuous. We would then have the following cases.

\vspace*{1mm}
\noindent
\textit{Case 1.}
If $\varphi'(0)$ is integer, then either $u_n=\lfloor u_n^*\rfloor$ for all $n>N$ or $u_n=\lceil u_n^*\rceil$ for all $n>N$. Indeed, the fractional part of $u_n^*$ is some constant which is not equal to $1/2$ w.p.\ $1$.

\vspace*{1mm}
\noindent
\textit{Case 2.}
If $\varphi'(0)$ is non-integer rational, then for $n>N$, $u_n$ chooses its value among $\lfloor u_n^*\rfloor$ and $\lceil u_n^*\rceil$ periodically; see the top of Figure~\ref{bild:BRW_Argmax} which shows the sequences $\lfloor u_n^*\rfloor$,  $\lceil u_n^*\rceil$ (both black circles) and $u_n$ (red dots).

\vspace*{1mm}
\noindent
\textit{Case 3.} If $\varphi'(0)$ is irrational, then there is no periodicity and both values $\lfloor u_n^*\rfloor$ and $\lceil u_n^*\rceil$ are attained with asymptotic density $\frac 12$; see the bottom of Figure~\ref{bild:BRW_Argmax}.
\end{remark}

\begin{figure}[!htbp]
\includegraphics[width=\textwidth]{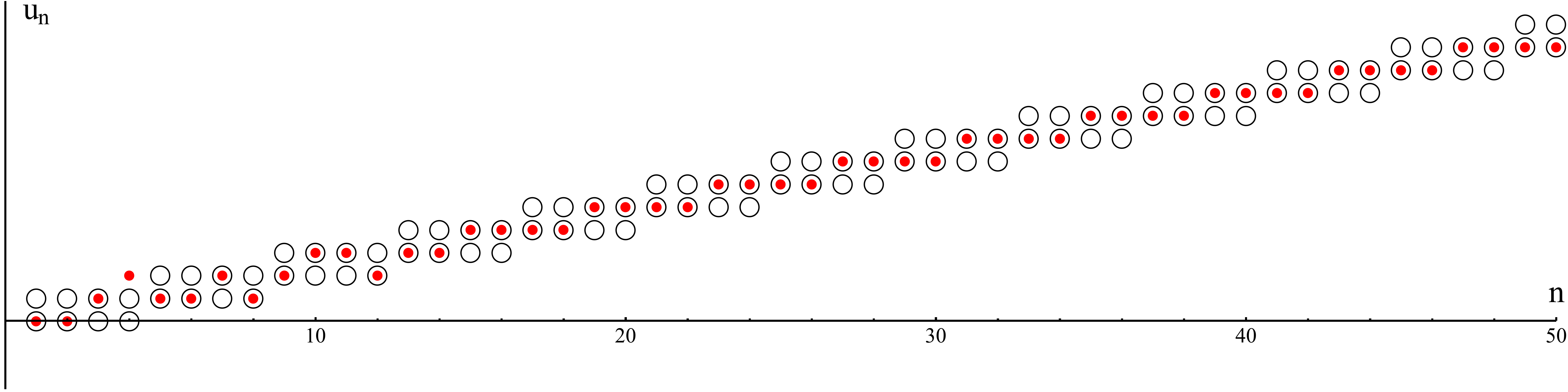}
\includegraphics[width=\textwidth]{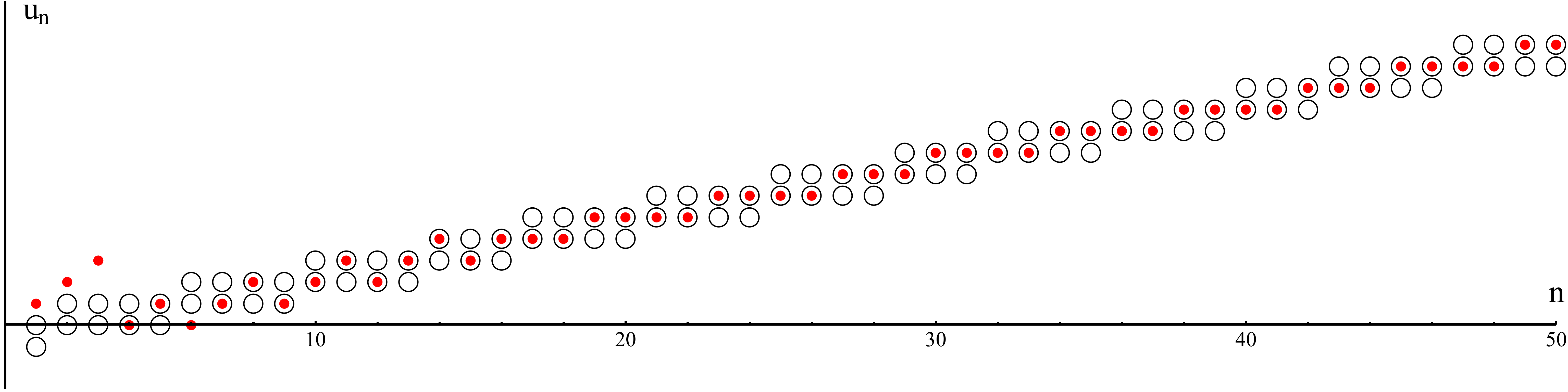}
\caption
{\small A realization of the mode $u_n$ (red dots) as a function of time $n=1,\ldots,50$. Black circles show $\lfloor u_n^*\rfloor$ and $\lceil u_n^* \rceil$. Top: The drift $\varphi'(0)=\frac 14$ is rational, periodic behavior. Bottom: The drift $\varphi'(0)$ is irrational, no periodicity.}
\label{bild:BRW_Argmax}
\end{figure}

\begin{remark}
Since $|\theta_n| \leq 1/2$,  $\limsup$ and $\liminf$ of the sequence
\begin{equation}\label{eq:width_sequence}
\tilde M_n := 2\sigma^2(0) n\left( 1 - \frac{\sqrt{2\pi n}\, \sigma(0)\, M_n}{W_\infty(0) m^n}\right)
\end{equation}
are a.s.\ finite.
Whether or not the sequence~\eqref{eq:width_sequence} has an a.s.\ limit, depends on the arithmetic properties of $\varphi'(0)$. Let
\begin{equation}\label{eq:Q_def}
Q(\theta) =
\frac{\kappa_3^2(0)}{6\sigma^4(0)} - \frac{\kappa_4(0)}{4\sigma^2(0)} + \theta^2
+(\log W_\infty)''(0).
\end{equation}

\vspace*{1mm}
\noindent
\textit{Case 1.}
If $\varphi'(0)$ is integer, then $\theta_n=\theta$ is constant and the a.s.\ limit of~\eqref{eq:width_sequence} exists and equals $Q(\theta)$.

\vspace*{1mm}
\noindent
\textit{Case 2.}
If $\varphi'(0)$ is non-integer but rational, we have finitely many a.s.\ subsequential limits of $\theta_n$ and of~\eqref{eq:width_sequence}. Hence the normalized height~\eqref{eq:width_sequence} has finitely many a.s.\ subsequential limits inside the interval $[Q(0), Q(\frac 12)]$; see the top of Figure~\ref{bild:BRW_Max}. Note that these limits are not equally spaced because of the quadratic term $\theta^2$ in~\eqref{eq:Q_def}.

\vspace*{1mm}
\noindent
\textit{Case 3.}
For irrational $\varphi'(0)$ the set of  subsequential limits of $\theta_n$ is the whole interval $[0,\frac 12]$. The set a.s.\ subsequential limits of~\eqref{eq:width_sequence} is therefore the interval $\{Q(\theta)\colon \theta\in [0,\frac 12]\}$; see the bottom of Figure~\ref{bild:BRW_Max}. It is an interval of length $1/4$. On Figure~\ref{bild:BRW_Max} one sees that there are much more points close to the lower limit $Q(0)$ than to the upper limit $Q(\frac 12)$. To explain this, note that the sequence $\theta_n$ is uniformly distributed on $[0,\frac 12]$ by Weyl's equidistribution theorem (stating that the sequence of fractional parts $\{\varphi'(0) n\}$, $n\in\N$, is uniformly distributed on $[0,1]$), hence the asymptotic density of  $\theta_n^2$ equals $\dd x/\sqrt {2x}$ on $[0,\frac 12]$.
\end{remark}

\begin{figure}[!htbp]
\includegraphics[width=\textwidth]{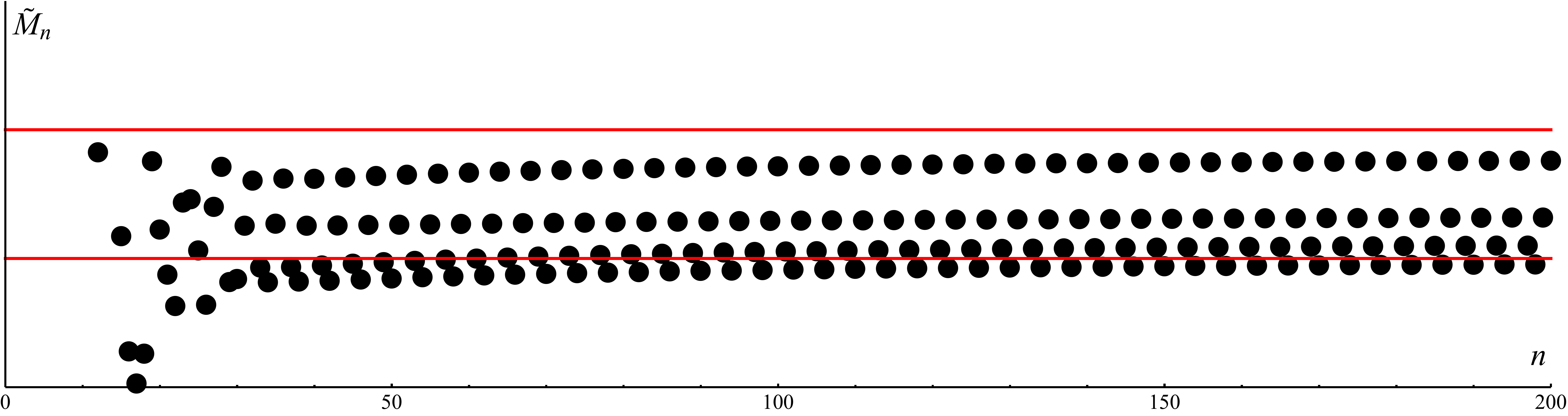}
\includegraphics[width=\textwidth]{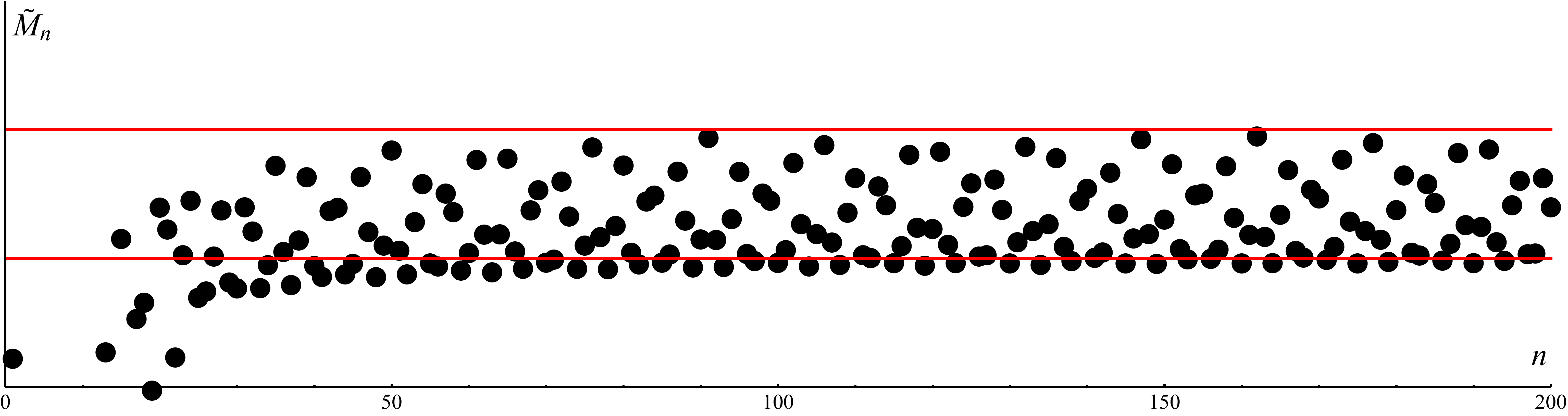}
\caption
{\small The normalized height $\tilde M_n$, see~\eqref{eq:width_sequence}, as a function of time $n=1,\ldots,200$  in the case of rational $\varphi'(0)=\frac 14$ (top) and irrational $\varphi'(0)$ (bottom). The red horizontal lines show the values $Q(0)$ and $Q(\frac 12)$. }
\label{bild:BRW_Max}
\end{figure}

\begin{remark}
For branching random walks in continuous time $t\geq 0$, see Section~\ref{subsec:cont_time_BRW}, we have the following two cases.

\vspace*{1mm}
\noindent
\textit{Case 1.} If $\varphi'(0)=0$, then
$$
\theta := \theta_n = \min_{k\in\Z} \left|\frac{W_\infty'(0)}{W_\infty(0)} - \frac{\kappa_3(0)}{2\sigma^2(0)} -k\right|.
$$
is constant and~\eqref{eq:width_sequence} converges a.s.\ to $Q(\theta)$.

\vspace*{1mm}
\noindent
\textit{Case 2.}
If $\varphi'(0)\neq 0$, then the set of a.s.\ subsequential limits of~\eqref{eq:width_sequence} is  the interval $\{Q(\theta)\colon \theta\in [0,\frac 12]\}$. It is exactly this case which is relevant for random trees.
\end{remark}

\subsection{Related results, extensions and open questions}
Consider the random measure $\pi_n$ on $\Z$ counting the particles in the BRW at time $n$,
$$
\pi_n := \sum_{i=1}^{N_n} \delta_{z_{i,n}}.
$$
Recall that $\delta_z$ denotes the Dirac delta-measure at $z$. The moment generating function of $\pi_n$ is given by
$$
\Lambda_n(\beta) := \int_{\R}\eee^{\beta t} \pi_n(\dd t) = \eee^{\varphi(\beta) n} W_n(\beta),
$$
where $W_n$ is the Biggins martingale defined in~\eqref{eq:biggins_martingale_def}. As $n\to\infty$, we have
\begin{equation}\label{eq:mod_phi}
\sup_{\beta\in\bD_\eps} \left|\eee^{- \varphi(\beta)n} \Lambda_n(\beta) - W_\infty(\beta)\right| \toas 0,
\end{equation}
where $\bD_\eps\subset \C$ is a sufficiently small disc of radius $\eps$ centered at $0$. To use the terminology
introduced recently in~\cite{delbaen_kowalski_nikeghbali}, \cite{feray_meliot_nikeghbali}, the sequence of random
measures $\pi_n$ converges with probability $1$ in the mod-$\varphi$ sense.  Roughly speaking, \eqref{eq:mod_phi} says
that the random measure $\pi_n$ is close to $\vartheta_n$, the $n$-fold convolution of the measure $\vartheta_1$ which has cumulant generating function $\varphi(\beta)$; see~\eqref{eq:def_varphi}. The random analytic function $W_\infty$ describes the ``convolution difference'' between these two measures and therefore plays a central (and somewhat mysterious) role in the theory of mod-$\varphi$ convergence.
Note that in our case, the limiting function $W_\infty$ is \textit{random}. Mod-$\varphi$ convergence is a very strong  notion since it implies many classical limit theorems (like the central and the local limit theorem). In fact, Corollary~\ref{cor:asympt_expansion_BRW_optimal} could be deduced from the general framework recently given in~\citet{feray_meliot_nikeghbali}.

Let us stress that in our proofs we do \textit{not} use the martingale property of $W_n$. We need only that w.p.\ $1$
there is a uniform convergence of $W_n$ to a random analytic function $W_\infty$ and that the speed of this convergence
is superpolynomial; see Lemma~\ref{lem:W_infty_W_T}.
Results similar to those obtained in the present paper should hold for any sequence of ``random profiles'' $L_n:\Z\to\R$ (not necessarily related to the BRW) having a moment generating function that converges (after it has been divided by its expectation) superpolynomially to some random analytic function $W_\infty$.

Our results are strong limit theorems in the sense that they refer to the a.s.\ asymptotic behavior of various random quantities related to $L_n$ such as $L_n(k_n)$, the mode $u_n$ and the height $M_n$.
In the case of $L_n(k_n)$, it is possible to augment our results by more refined asymptotics for the difference between
the quantity
of
interest and its limit. To this end, it suffices to  take more terms in the asymptotic expansion. The a.s.\  limits of the rescaled differences have non-normal distribution that can be expressed through the derivatives of $W_\infty$. 

Our main motivation was to apply the results to random trees (which we plan to do in a separate paper), so we did not seek for optimal moment conditions. The moment condition required in Assumption~D can certainly be relaxed.  It should also be possible to extend the results to BRW's on the $d$-dimensional lattice $\Z^d$. On the other hand, it should be possible to obtain an asymptotic expansion for BRW's satisfying the strong non-lattice condition. Indeed, the first two terms were obtained very recently by~\citet{gao_liu}. The expansion stated in Corollary~\ref{cor:asympt_expansion_BRW_optimal}, Equation~\eqref{eq:L_n_uniform_exp}, is valid uniformly in the range of ``central order statistics'', that is as long as $k \in (\varphi'(\beta_-)n+\eps n, \varphi'(\beta_+) n - \eps n)$. The case $k=\varphi'(\beta_\pm)n \mp o(n)$ is not covered by our results because the function $W_\infty (\beta)$ is not analytic at $\beta_+$ and $\beta_-$.

\section{Proofs}\label{sec:proofs}
\subsection{Notation and method of proof}
Consider a branching random walk in discrete time satisfying Assumptions A--E.
Fix some $\beta \in \cD_{\varphi}$ and recall the notation
\begin{equation}
\mu(\beta) =\varphi'(\beta ),
\quad
\sigma^2(\beta) =\varphi''(\beta ).
\end{equation}
Consider a random measure $\nu_n$ counting the appropriately weighted particles of the branching random walk at time $n$:
\begin{equation}\label{eq:def_mu_n}
\nu_{n}=\eee^{-\varphi(\beta)n} \sum_{i=1}^{N_n} \eee^{\beta  z_{i,n}} \delta\left(\frac{z_{i,n} - \mu(\beta) n}{\sigma(\beta)\sqrt{n}}\right).
\end{equation}
Here we have written $\delta(z)$ instead of $\delta_z$ as this is
typographically more convenient.

Note that $\nu_n$ depends on $\beta$ but we usually suppress this in our notation.
We have
$$
\nu_n(\R) = W_{n} (\beta ) \toas W_\infty (\beta ),
$$
where $W_n$ is the Biggins martingale given by~\eqref{eq:biggins_martingale_def}.
The measure $\nu_n$ is finite, but, in general, it is not a probability measure.

The characteristic function of the random measure $\nu_n$ is a random function $\{\psi_n(s;\beta )\colon s\in\R\}$ given by
\begin{equation}\label{eq:psi_n_def}
\psi_n(s;\beta ) = \int_\R \eee^{i s z} \nu_n(\dd z)
=
f_{n}(s;\beta) W_{n} \left(\beta +\frac{is}{\sigma(\beta)\sqrt n}\right),
\end{equation}
where
\begin{equation}\label{eq:def_f_n}
f_n (s;\beta ) = \exp\left\{-n\varphi(\beta ) -  \mu(\beta) \sqrt {n}  \frac{is}{\sigma(\beta)}
+ n \varphi\left(\beta +\frac{is}{\sigma(\beta) \sqrt n}\right)\right\}.
\end{equation}

\subsection{Expansion of the characteristic function}\label{sec:proof_exp_char}
In Proposition~\ref{prop:char_funct_asymptotic_expansion} we will state an asymptotic expansion of $\psi_n(s;\beta )$ in powers of $n^{-1/2}$. On the formal level, this expansion is a product of expansions of $f_n$ and $W_n$.  The main idea is that $f_n$ can be represented as a characteristic function of a sum of i.i.d.\ random variables (so that the classical Edgeworth expansion applies), while $W_n$ can be expanded into a Taylor series.  Multiplying the expansions of $f_n$ and $W_n$ we will obtain an expansion of $\psi_n$.

Let us first state an expansion of $f_n$. Define polynomials $\tilde P_j(z;\beta )$, $j\in\N_0$, by the formal identity
$$
\exp \left\{\sum_{j=1}^{\infty} \frac{\kappa_{j+2}(\beta )}{(j+2)!} z^{j+2} u^j\right\} =
\sum_{j=0}^{\infty} \tilde P_j(z;\beta ) u^j.
$$
It is known that $\tilde P_j$ has degree $3j$ and its coefficients can be expressed through $\kappa_3(\beta),\ldots,\kappa_{j+2}(\beta)$; see~\cite[p.~52]{bhattacharya_ranga_rao_book}. Also, $\tilde P_j$ is an even (resp.\ odd) function if $j$ is even (resp.\ odd). The first few polynomials $\tilde P_j$ are given by
$$
\tilde P_0(z;\beta )=1,
\quad
\tilde P_1(z;\beta ) = \frac{\kappa_3(\beta ) z^3}{6},
\quad
\tilde P_2(z;\beta ) = \frac{\kappa_4(\beta ) z^4}{24} + \frac{\kappa_3^2(\beta ) z^6}{72},
$$
$$
\tilde P_3(z;\beta ) = \frac{\kappa_5(\beta ) z^5}{120} + \frac{\kappa_3(\beta )\kappa_4(\beta ) z^7}{144} + \frac{\kappa_3^3(\beta )z^9}{6^4}.
$$
The next lemma is a central element in the proof of the classical Edgeworth expansion; see Lemma~11 in~\cite[Ch.~VI, p.~175]{petrov_book}.
\begin{lemma}\label{lem:est_f_T}
Fix $r\in\N_0$ and a compact interval $K\subset \cD_\varphi$. For all $n\in\N$, real $|s| < n^{1/7}$, and $\beta\in K$, we have
$$
\left|f_n(s;\beta ) - \eee^{-\frac 12 {s^2}} \sum_{j=0}^r \frac{1}{n^{j/2}} \tilde P_j \left(\frac{is}{\sigma(\beta)}; \beta \right)\right| \leq \frac{\delta_1(n)}{n^{r/2}} \eee^{-\frac 12 {s^2}} (1 + |s|^{3r+3}),
$$
where $\delta_1(n)$ does not depend on $s$ and $\lim_{n\to+\infty}\delta_1(n)=0$.
\end{lemma}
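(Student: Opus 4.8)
The plan is to recognise $f_n(\,\cdot\,;\beta)$ as the characteristic function of a standardised sum of $n$ i.i.d.\ random variables, so that the statement becomes an instance of the classical Edgeworth lemma (Lemma~11 in~\cite[Ch.~VI, p.~175]{petrov_book}), and then to check that the estimate can be made uniform in $\beta\in K$. Concretely, for $\beta\in\cD_\varphi$ let $\tilde\vartheta^{(\beta)}_1$ be the exponential tilt of $\vartheta_1$ by $\beta$, i.e.\ the probability measure on $\Z$ with $\tilde\vartheta^{(\beta)}_1(\{k\})=\eee^{\beta k-\varphi(\beta)}\vartheta_1(\{k\})$; this is a probability measure since $\sum_{k\in\Z}\eee^{\beta k}\vartheta_1(\{k\})=\eee^{\varphi(\beta)}$, and its cumulant generating function is $t\mapsto\varphi(\beta+t)-\varphi(\beta)$. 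Hence a random variable $Y^{(\beta)}$ with law $\tilde\vartheta^{(\beta)}_1$ has mean $\mu(\beta)=\varphi'(\beta)$, variance $\sigma^2(\beta)=\varphi''(\beta)>0$ and $j$-th cumulant $\kappa_j(\beta)=\varphi^{(j)}(\beta)$; since $\varphi$ is analytic in a complex neighbourhood of $\cD_\varphi$, the variable $Y^{(\beta)}$ has finite exponential moments, hence finite absolute moments of every order. Taking i.i.d.\ copies $Y^{(\beta)}_1,Y^{(\beta)}_2,\dots$ and using $\E\eee^{itY^{(\beta)}_1}=\eee^{\varphi(\beta+it)-\varphi(\beta)}$ with $t=s/(\sigma(\beta)\sqrt n)$, a direct computation from~\eqref{eq:def_f_n} shows that $f_n(s;\beta)$ is precisely the characteristic function of
\begin{equation*}
T_n^{(\beta)}:=\frac{Y^{(\beta)}_1+\dots+Y^{(\beta)}_n-n\mu(\beta)}{\sigma(\beta)\sqrt n}.
\end{equation*}

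With this identification the polynomials $\tilde P_j(\,\cdot\,;\beta)$ defined before the lemma are exactly the Edgeworth polynomials built from the cumulants $\kappa_3(\beta),\dots,\kappa_{j+2}(\beta)$ of $Y^{(\beta)}$, and the asserted inequality is the content of Petrov's Lemma~11 applied to the sum $T_n^{(\beta)}$ for the fixed distribution $\tilde\vartheta_1^{(\beta)}$. Alternatively one reproves it directly: for $|s|<n^{1/7}$ the point $w=is/(\sigma(\beta)\sqrt n)$ lies, for all large $n$, in the region of analyticity of $\varphi$, and Taylor expansion of $\varphi$ at $\beta$ gives, uniformly in $\beta\in K$,
\begin{equation*}
\log f_n(s;\beta)=-\tfrac12 s^2+\sum_{j=1}^{r}\frac{\kappa_{j+2}(\beta)}{(j+2)!}\Bigl(\frac{is}{\sigma(\beta)}\Bigr)^{j+2}n^{-j/2}+O\bigl(|s|^{r+3}n^{-(r+1)/2}\bigr).
\end{equation*}
One then writes $f_n=\eee^{-s^2/2}\eee^{B_n}\eee^{\rho_n}$ with $B_n$ the displayed sum (a polynomial of degree $r$ in $n^{-1/2}$ whose coefficients have $s$-degree at most $r+2$) and $\rho_n$ the error term, observes that for $|s|<n^{1/7}$ both $\Re B_n$ and $\rho_n$ tend to $0$ uniformly, expands $\eee^{B_n}$ around $n^{-1/2}=0$ using the integral form of the Taylor remainder of order $r+1$ — the coefficients of $n^{0},n^{-1/2},\dots,n^{-r/2}$ being exactly $\tilde P_0(is/\sigma(\beta);\beta),\dots,\tilde P_r(is/\sigma(\beta);\beta)$ by the defining formal identity — and estimates the remainder together with $\eee^{\rho_n}-1$. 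Because the integration variable stays in $[0,n^{-1/2}]$, each derivative $B_n^{(l)}$ contributes $s$-degree at most $l+2$ there, so the $(r+1)$-th derivative of $\eee^{B_n}$ has $s$-degree at most $3r+3$ and, $|\eee^{B_n}|$ being bounded, one arrives at the bound $\delta_1(n)\,n^{-r/2}\,\eee^{-s^2/2}(1+|s|^{3r+3})$ with $\delta_1(n)=O(n^{-1/2})$.

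The only genuinely delicate point — and the place where care is needed — is the uniformity in $\beta$ over the compact interval $K$: Petrov's lemma is stated for a single distribution, so one must check that $\delta_1(n)$ can be chosen independent of $\beta\in K$. This is immediate once one notes that the proof uses only (i) bounds on finitely many absolute moments of $Y^{(\beta)}$ (up to order $3r+3$) and (ii) a uniform bound on the Taylor remainder of $\varphi$ at $\beta$; both are controlled uniformly on $K$ because $\varphi$ and all its derivatives are bounded on a fixed complex neighbourhood of $K$, so that the moments of $Y^{(\beta)}$ depend continuously on $\beta$ and the $O(|s|^{r+3}n^{-(r+1)/2})$ error above is uniform on $K$. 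Everything else is the routine bookkeeping of reorganising a power series in $n^{-1/2}$ and estimating the tail, which is carried out in Petrov's proof and which we therefore simply quote.
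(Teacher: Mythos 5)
Your proposal is correct and follows essentially the same route as the paper: exponentially tilting $\vartheta_1$ by $\beta$, identifying $f_n(\cdot;\beta)$ as the characteristic function of the standardized sum of $n$ i.i.d.\ tilted variables with cumulants $\kappa_j(\beta)$, and then invoking Petrov's Lemma~11, with the uniformity in $\beta\in K$ obtained exactly as in the paper by noting that the quantities entering Petrov's proof (finitely many moments, Taylor remainders of $\varphi$) are controlled uniformly on $K$. The additional direct re-derivation you sketch is a harmless elaboration of what is quoted, not a different method.
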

\begin{proof}
Recall that $\vartheta_1$ is the intensity of the branching random walk at time $n=1$.  Let $Z_1,Z_2,\ldots$ be i.i.d.\ random integer-valued random variables with probability distribution
\begin{equation}\label{eq:Z_1_distr}
\P[Z_1=k] := \eee^{-\varphi(\beta )+ \beta  k} \vartheta_1(\{k\}), \quad k\in\Z.
\end{equation}
Then the log-characteristic function of $Z_1$ is just
\begin{equation}\label{eq:Z_1_char_func}
\log \E [\eee^{is Z_1}] = \varphi(\beta +is) - \varphi(\beta ) = \sum_{j=1}^{\infty} \kappa_j(\beta ) \frac{(is)^j}{j!},
\end{equation}
where the second equality holds for sufficiently small $|s|$.
In particular, $\E Z_1 = \mu(\beta) $, $\Var Z_1 =\sigma^2(\beta)$. Consider the random variable
$$
S_n^* := \frac{Z_1+\ldots+Z_n - \mu(\beta)  n}{\sigma(\beta) \sqrt n}.
$$
The characteristic function of $S_n^*$ is $f_n(s;\beta )$; see~\eqref{eq:def_f_n} and~\eqref{eq:Z_1_char_func}.  The moments of $Z_1$ are finite because $\beta\in \cD_\varphi$ and $\cD_\varphi$ is open. Hence we can apply Lemma~11 from~\cite[Ch.~VI, p.~175]{petrov_book}. In fact, by looking at the proof of this lemma, one can see that $\lim_{n\to\infty} \delta_1(n) = 0$ holds uniformly in $\beta\in K$.   This yields the statement of the lemma.
\end{proof}
At this place, we can prove Proposition~\ref{prop:expansion_expected}.
\begin{proof}[Proof of Proposition~\ref{prop:expansion_expected}]
Let $Z_1,Z_2,\ldots$ be i.i.d.\ integer-valued random variables with probability distribution~\eqref{eq:Z_1_distr} and log-characteristic function~\eqref{eq:Z_1_char_func}.
The $j$-th cumulant of $Z_1$ is $\kappa_j(\beta )$. We have
$$
\eee^{\beta  k - \varphi(\beta )n} \E L_n(k) = \eee^{\beta  k - \varphi(\beta )n} \vartheta_n(\{k\}) = \P [Z_1+\ldots+Z_n = k]
$$
Applying the classical asymptotic expansion~\eqref{eq:asympt_expansion_random_walk}, we obtain~\eqref{eq:asympt_expansion_intensity}. By examining the proof of the classical expansion, Theorem~13 in~\cite[Ch.~VII, p.~205]{petrov_book}, one can see that all estimates there are uniform in $\beta\in K$.
\end{proof}

The next lemma is a Taylor expansion of $W_\infty$. Recall that $(\beta_-,\beta_+)$  denotes the open interval consisting of all $\beta \in \cD_\varphi$  such that $\varphi'(\beta )\beta  < \varphi(\beta )$.

\begin{lemma}\label{lem:W_infty_Taylor}
Fix $r\in\N_0$ and a compact interval $K \subset (\beta_-,\beta_+)$. Then we can find a sufficiently small $\eps>0$ such that for all $\beta\in K$ and every $u\in \C$ with $|u|\leq \eps$,
$$
\left|W_{\infty}(\beta  + u) - \sum_{j=0}^r \frac{W_\infty^{(j)}(\beta)}{j!} u^j \right| \leq  M_1 |u|^{r+1},
$$
where $M_1$ is an a.s.\ finite random variable.
\end{lemma}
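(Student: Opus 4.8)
The plan is to derive the Taylor estimate from Cauchy's integral formula applied to the random analytic function $W_\infty$ on the complex neighborhood $V\subset\C$ of $(\beta_-,\beta_+)$ provided by the Biggins--Uchiyama uniform convergence result recalled in Section~\ref{subsec:BRW_def_assumpt}. First I would fix the compact interval $K\subset(\beta_-,\beta_+)$ and choose $\eps>0$ so small that the closed $2\eps$-neighborhood $\overline{K_{2\eps}}:=\{z\in\C:\dist(z,K)\le 2\eps\}$ is contained in $V$; this is possible since $V$ is open and $K$ compact. On this compact set the random analytic function $W_\infty$ is a.s.\ bounded: indeed, by the uniform a.s.\ convergence $\sup_{z\in\overline{K_{2\eps}}}|W_\infty(z)-W_n(z)|\toas 0$, the quantity $M_0:=\sup_{z\in\overline{K_{2\eps}}}|W_\infty(z)|$ is a.s.\ finite (it is dominated by $\sup_{z\in\overline{K_{2\eps}}}|W_N(z)|+1$ for a suitable a.s.\ finite $N$, and each $W_n$ is a genuine analytic — in fact entire on $U$ — function, hence bounded on the compact set).

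Next, for $\beta\in K$ and $|u|\le\eps$, I would write the Taylor remainder via the standard integral representation
$$
W_\infty(\beta+u)-\sum_{j=0}^{r}\frac{W_\infty^{(j)}(\beta)}{j!}u^{j}
=\frac{u^{r+1}}{2\pi i}\oint_{|z-\beta|=2\eps}\frac{W_\infty(z)}{(z-\beta)^{r+1}(z-\beta-u)}\,\dd z,
$$
which is valid because $|u|\le\eps$ guarantees $\beta+u$ lies strictly inside the contour $|z-\beta|=2\eps\subset V$. On that contour $|z-\beta|=2\eps$ and $|z-\beta-u|\ge 2\eps-\eps=\eps$, so estimating the integral trivially gives
$$
\Bigl|W_\infty(\beta+u)-\sum_{j=0}^{r}\frac{W_\infty^{(j)}(\beta)}{j!}u^{j}\Bigr|
\le |u|^{r+1}\cdot\frac{1}{2\pi}\cdot 2\pi(2\eps)\cdot\frac{M_0}{(2\eps)^{r+1}\eps}
= M_1\,|u|^{r+1},
$$
with $M_1:=M_0/((2\eps)^{r}\eps)$, which is an a.s.\ finite random variable independent of $\beta\in K$ and of $u$. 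This is exactly the claimed bound.

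I do not expect a serious obstacle here; the only point requiring a little care is the a.s.\ finiteness of the bound $M_0$ (hence of $M_1$) uniformly over the contours indexed by $\beta\in K$, and this is handled precisely by the uniform-over-compacts a.s.\ convergence $W_n\to W_\infty$ on $\overline{K_{2\eps}}$ together with analyticity of $W_\infty$ on $V$. An alternative, essentially equivalent route would be to bound the derivatives directly, $\sup_{\beta\in K}|W_\infty^{(j)}(\beta)|\le j!\,M_0/(2\eps)^{j}$ by Cauchy's estimates, and then sum the tail of the Taylor series geometrically for $|u|\le\eps$; either way the randomness enters only through the single a.s.\ finite constant $M_0$.
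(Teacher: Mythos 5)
Your proof is correct and follows essentially the same route as the paper: both rest on the a.s.\ analyticity of $W_\infty$ on a complex neighborhood $K_{2\eps}$ of $K$ (Biggins' uniform convergence result) and reduce the Taylor remainder bound to the a.s.\ finiteness of $\sup_{z\in K_{2\eps}}|W_\infty(z)|$, uniformly in $\beta\in K$. The only difference is cosmetic — you control the remainder via the Cauchy integral (Hermite) remainder formula on the circle $|z-\beta|=2\eps$, whereas the paper applies the maximum principle to the quotient $\eta(u;\beta)=u^{-(r+1)}\bigl(W_\infty(\beta+u)-\sum_{j=0}^r \frac{W_\infty^{(j)}(\beta)}{j!}u^j\bigr)$ — and both yield the same uniform random constant.
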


\begin{proof}
By~\cite[Corollary 3]{biggins_uniform}, for sufficiently small $\eps>0$ the random function $W_{\infty} (\beta)$ is analytic on the domain $K_{2\eps} := \{\beta\in\C\colon \dist (\beta, K) <2\eps\}$, with probability $1$. Fix some $\beta\in K$. The random function
$$
\eta(u;\beta) := \frac 1 {u^{r+1}} \left(W_{\infty}(\beta  + u) - \sum_{j=0}^r \frac{W_\infty^{(j)}(\beta)}{j!} u^j\right),
\;\;\;
0<|u|\leq \eps,
$$
can be extended to $u=0$ by continuity. It is a.s.\ analytic, hence the maximum principle yields that
$$
|\eta(u;\beta)| \leq \sup_{|u|=\eps} |\eta(u;\beta)|
\leq
\frac 1{\eps^{r+1}}\left(\sup_{|u|=\eps} |W_\infty(\beta+u)| +  \sum_{j=0}^r \frac{|W_\infty^{(j)}(\beta)|}{j!} \eps^j \right).
$$
The right-hand side can be bounded by an a.s.\ finite random variable $M_1$ not depending on $\beta\in K$, thus proving the statement.
\end{proof}

The next lemma shows that $W_n$ converges to $W_\infty$ at an exponential rate.
The proof closely follows the arguments given in Section 3 of~\cite{biggins_uniform}; see
also~\cite[Section 3]{biggins_uniform_IMS}. It is here that Assumption~D is crucial as it allows
us to move into the complex plane and to use Cauchy's integral formula in order to obtain a suitable
maximal inequality, see~\eqref{eq:exprate2} below. Such an inequality is the basis for
sup-norm convergence in the space of continuous functions on a compact set.

\begin{lemma}\label{lem:W_infty_W_T}
Fix a compact interval $K \subset (\beta_-,\beta_+)$.  Then we can find a sufficiently small
$\eps>0$ such that for all $n\in\N$,
$$
\sup_{\beta\in K} \sup_{|u|\leq \eps}
|W_n(\beta +u) - W_\infty(\beta +u)| < \eee^{-\eps n} M_2,
$$
where $M_2$ is some a.s.\ finite random variable.
\end{lemma}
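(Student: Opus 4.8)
The plan is to reprove, with an explicit exponential rate, the uniform convergence of $W_n$ to $W_\infty$ on a small complex neighbourhood of $K$; the a.s.\ convergence itself (uniformly on compacts of $V$) is already known and recorded in Section~\ref{subsec:BRW_def_assumpt}, so only the rate is at issue. Following the strategy of~\cite[Section~3]{biggins_uniform} (see also~\cite[Section~3]{biggins_uniform_IMS}), I would (i) establish a \emph{uniform geometric $L^p$ bound} on $W_\infty(\theta)-W_n(\theta)$ for $\theta$ in a complex $\eps$-neighbourhood of $K$, and then (ii) upgrade it to an almost sure sup-norm bound using that $W_\infty-W_n$ is analytic, so that $|W_\infty-W_n|^p$ is subharmonic, combined with Borel--Cantelli.

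\textbf{The $L^p$ estimate.} Fix $p=1+\delta$ with $\delta>0$ so small that $\varphi(p\beta)-p\varphi(\beta)\le -c(p-1)<0$ for all $\beta\in K$, which is possible because $\frac{\dd}{\dd p}\big|_{p=1}\!\big(\varphi(p\beta)-p\varphi(\beta)\big)=\beta\varphi'(\beta)-\varphi(\beta)$ is strictly negative on the compact set $K\subset(\beta_-,\beta_+)$ (use continuity and compactness). By the branching recursion, for $\theta$ in the complex neighbourhood $U$ of $\cD_\varphi$ from Section~\ref{subsec:BRW_def_assumpt},
\begin{equation*}
W_{n+1}(\theta)-W_n(\theta)=\sum_{i=1}^{N_n}\eee^{\theta z_{i,n}-\varphi(\theta)n}\big(\widehat W^{(i)}(\theta)-1\big),
\end{equation*}
where, conditionally on $\cF_n$, the $\widehat W^{(i)}(\theta)$ are i.i.d.\ copies of $W_1(\theta)$ with conditional mean $1$ (the identity $\E W_1(\theta)=1$ extends to complex $\theta\in U$). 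This is a conditionally martingale-difference array, so the von Bahr--Esseen inequality (valid for $1\le p\le 2$) together with $\E\sum_{i=1}^{N_n}\eee^{p\Re\theta\, z_{i,n}}=\eee^{n\varphi(p\Re\theta)}$ yields
\begin{equation*}
\E\big|W_{n+1}(\theta)-W_n(\theta)\big|^p\le 2\,\E|W_1(\theta)-1|^p\cdot\eee^{\,n(\varphi(p\Re\theta)-p\Re\varphi(\theta))}.
\end{equation*}
By continuity of $\varphi$, $\varphi(p\Re\theta)-p\Re\varphi(\theta)\to\varphi(p\beta)-p\varphi(\beta)$ as $\eps\to0$, uniformly over $\beta\in K$ and $|u|\le\eps$ with $\theta=\beta+u$, so the exponent is $\le-\gamma n$ for some fixed $\gamma>0$ once $\eps$ is chosen small enough (in terms of $p$ and $K$). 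Moreover $|W_1(\theta)|\le\eee^{\varphi(\Re\theta)-\Re\varphi(\theta)}W_1(\Re\theta)\le C\,W_1(\Re\theta)$ with $C$ independent of $\theta$ (here the prefactor is a genuine constant, since we are at time $1$), so Assumption~D gives $\sup_{\theta}\E|W_1(\theta)-1|^p<\infty$. Hence $\|W_{n+1}(\theta)-W_n(\theta)\|_p\le C_1^{1/p}\eee^{-\gamma n/p}$, and summing the telescoping series $W_\infty(\theta)-W_n(\theta)=\sum_{k\ge n}(W_{k+1}(\theta)-W_k(\theta))$ (legitimate because the differences are summable in $L^p$, hence $W_n(\theta)\to W_\infty(\theta)$ also in $L^p$) we obtain a deterministic $C_2$ with
\begin{equation*}
\sup_{\beta\in K}\ \sup_{|u|\le\eps}\E\big|W_\infty(\beta+u)-W_n(\beta+u)\big|^p\le C_2\,\eee^{-\gamma n}.
\end{equation*}

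\textbf{From $L^p$ to almost sure.} By~\cite[Corollary~3]{biggins_uniform}, as already used in Lemma~\ref{lem:W_infty_Taylor}, $W_\infty$ is a.s.\ analytic on $K_{2\eps}=\{\beta\in\C\colon\dist(\beta,K)<2\eps\}$ (shrink $\eps$ if necessary), and each $W_n$ is analytic there, so $g_n:=W_\infty-W_n$ is a.s.\ analytic on $K_{2\eps}$ and $|g_n|^p$ is subharmonic. The sub-mean-value inequality over the disc of radius $\eps$ centred at any point $\beta+u$ with $\beta\in K$, $|u|\le\eps$ gives $|g_n(\beta+u)|^p\le(\pi\eps^2)^{-1}\iint_{K_{2\eps}}|g_n|^p\,\dd A$; taking the supremum over such points, then the expectation, and applying Tonelli together with the $L^p$ bound, $\E\big[\sup_{\beta\in K,\,|u|\le\eps}|g_n(\beta+u)|^p\big]\le C_3\,\eee^{-\gamma n}$. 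Markov's inequality then gives $\P\big(\sup_{\beta\in K,\,|u|\le\eps}|g_n(\beta+u)|>\eee^{-\gamma n/(2p)}\big)\le C_3\,\eee^{-\gamma n/2}$, which is summable, so Borel--Cantelli yields $\sup_{\beta\in K,\,|u|\le\eps}|g_n(\beta+u)|\le\eee^{-\gamma n/(2p)}$ a.s.\ for all large $n$. Since each $g_n$ is a.s.\ finite on $\overline{K_\eps}$, the finitely many exceptional terms are absorbed into an a.s.\ finite random variable $M_2$, and after replacing $\eps$ by $\min(\eps,\gamma/(2p))$ this is exactly the asserted bound.

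\textbf{Main obstacle.} The delicate step is the uniform-in-$\theta$ geometric decay in the $L^p$ estimate. It hinges on the defining inequality $\beta\varphi'(\beta)<\varphi(\beta)$ of $(\beta_-,\beta_+)$, which makes the ``$L^p$-Lyapunov exponent'' $\varphi(p\beta)-p\varphi(\beta)$ negative for $p$ slightly above $1$, and then on a perturbation argument in which the choice of $p$ near $1$ must be coordinated with the radius $\eps$ of the complex neighbourhood so that $\varphi(p\Re\theta)-p\Re\varphi(\theta)$ stays negative off the real axis. Assumption~D is precisely what makes $\E|W_1(\theta)|^p$ finite for such complex $\theta$, which lets us move into $\C$ and run the Cauchy/subharmonicity argument at all; the remaining passage from $L^p$ to a.s.\ bounds is routine.
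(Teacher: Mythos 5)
Your proposal is correct and follows essentially the same route as the paper's proof, which likewise implements the strategy of Section~3 of \cite{biggins_uniform}: a geometric decay of moments of the increments $W_{n+1}-W_n$ on a complex neighbourhood of $K$ (the paper cites the first part of Lemma~2 there, while you rederive it via the conditional von Bahr--Esseen bound with the exponent $\varphi(p\Re\theta)-p\Re\varphi(\theta)<0$ and Assumption~D), followed by an analyticity-based maximal inequality and summation. The remaining differences are cosmetic: the paper uses the Cauchy-formula bound (Lemma~3 of \cite{biggins_uniform}) on $\sup_{z}|W_{n+1}(z)-W_n(z)|$ and obtains $M_2$ directly as an exponentially weighted series with finite expectation, whereas you work with $W_\infty-W_n$, use subharmonicity of its $p$-th power plus Markov and Borel--Cantelli, and absorb finitely many exceptional indices into $M_2$ --- both routes are valid, up to the routine bookkeeping of proving your $L^p$ bound on a $2\eps$-neighbourhood so that the discs used in the sub-mean-value step stay inside it.
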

\begin{proof}
Let $\beta\in K$. The first part of Lemma~2 in~\cite{biggins_uniform} implies that for some $\rho>0$, and with
\begin{equation*}
  z_1(t) \, :=\,  \beta + 2\rho \eee^{2\pi i t},\ 0\le t \leq 1,
\end{equation*}
we have that, for some $C=C(\beta,\rho)<\infty$ and some $\kappa=\kappa(\beta,\rho)<1$,
\begin{equation}\label{eq:exprate1}
  \E\left| W_{n+1}\bigl(z_1(t)\bigr) - W_{n}\bigl(z_1(t)\bigr)\right| \, \le C\, \kappa^n
    \quad\text{for all } n\in\bN,\, 0\le t\le 1.
\end{equation}
Further, let 
$\bD_\rho(\beta)=\{z\in\bC:\,|z-\beta|\le \rho\}$.
Then, by Lemma~3 in~\cite{biggins_uniform},
\begin{equation}\label{eq:exprate2}
   \sup_{z\in \bD_\rho(\beta)} \bigl|W_{n+1}(z)-W_n(z)\bigr|
      \; \le\; \int_0^1  \Bigl| W_{n+1}\bigl(z_1(t)\bigr) - W_{n}\bigl(z_1(t)\bigr)\Bigr| \, \dd t.
\end{equation}
Combining~\eqref{eq:exprate1} and~\eqref{eq:exprate2} and using Fubini's theorem we arrive at
\begin{align*}
  \E\left[\sup_{z\in \bD_\rho(\beta)} \bigl|W_{n+1}(z)-W_n(z)\bigr|\right]\ \le \
      \int_0^1  \E\Bigl| W_{n+1}\bigl(z_1(t)\bigr) - W_{n}\bigl(z_1(t)\bigr)\Bigr| \, \dd t
         \ \le \ C\, \kappa^n.
\end{align*}
As $K$ is compact we can now find an $\eps>0$ such that
\begin{equation*}
  \sum_{n=1}^\infty \eee^{\eps n}\, \E\biggl[\sup_{\beta\in K}
          \sup_{|u|\le \eps} \bigl|W_{n+1}(\beta+u)-W_n(\beta+u)\bigr|\biggr]
    \; < \; \infty,
\end{equation*}
and we may then take
\begin{equation*}
  M_2\; :=\;
       \sum_{n=1}^\infty \eee^{\eps n} \sup_{\beta\in K}
           \sup_{|u|\le\eps} \bigl|W_{n+1}(\beta+u)-W_n(\beta+u)\bigr|. \qedhere
\end{equation*}
\end{proof}

Using these tools we obtain
\begin{proposition}\label{prop:est_W_T}
Fix $r\in\N_0$ and a compact interval $K \subset (\beta_-,\beta_+)$.  Then we can find a sufficiently small  $\eps>0$ such that for every $\beta\in K$,  $n\in\N$ and $s\in\C$ satisfying $|s|\leq \eps \sigma(\beta)  \sqrt n$ we have
$$
\left|W_{n}\left(\beta  + \frac{is}{\sigma(\beta) \sqrt n}\right)
-
\sum_{j=0}^r \frac{1}{n^{j/2}}\frac{W_\infty^{(j)}(\beta)}{j!} \left(\frac{is}{\sigma(\beta) }\right)^j\right|
<
\frac{M_3}{n^{\frac {r+1}2}} (1+ |s|^{r+1}),
$$
where $M_3$ is some a.s.\ finite random variable.
\end{proposition}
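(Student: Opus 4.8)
The plan is to combine the two preceding lemmas via the triangle inequality after the change of variable $u := is/(\sigma(\beta)\sqrt n)$: the Taylor expansion of $W_\infty$ from Lemma~\ref{lem:W_infty_Taylor} controls how well the polynomial $\sum_{j=0}^r \frac{W_\infty^{(j)}(\beta)}{j!} u^j$ approximates $W_\infty(\beta+u)$, while Lemma~\ref{lem:W_infty_W_T} controls the difference between $W_n(\beta+u)$ and $W_\infty(\beta+u)$. No new analytic input is needed beyond these two statements.

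First I would fix $\eps>0$ small enough that both Lemma~\ref{lem:W_infty_Taylor} and Lemma~\ref{lem:W_infty_W_T} hold with this common $\eps$ (simply take the smaller of the two radii they produce). For $\beta\in K$, $n\in\N$ and $s\in\C$ with $|s|\le \eps\,\sigma(\beta)\sqrt n$, set $u := is/(\sigma(\beta)\sqrt n)$, so that $|u| = |s|/(\sigma(\beta)\sqrt n)\le \eps$, and observe that the partial sum appearing in the statement is exactly $\sum_{j=0}^r \frac{W_\infty^{(j)}(\beta)}{j!} u^j$. The triangle inequality then gives
$$
\Bigl|W_n(\beta+u) - \sum_{j=0}^r \frac{W_\infty^{(j)}(\beta)}{j!} u^j\Bigr|
\le
\bigl|W_n(\beta+u) - W_\infty(\beta+u)\bigr|
+ \Bigl|W_\infty(\beta+u) - \sum_{j=0}^r \frac{W_\infty^{(j)}(\beta)}{j!} u^j\Bigr|.
$$

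For the last term, Lemma~\ref{lem:W_infty_Taylor} yields the bound $M_1 |u|^{r+1} = M_1 |s|^{r+1}\bigl(\sigma(\beta)\sqrt n\bigr)^{-(r+1)}$; since $\sigma^2=\varphi''$ is continuous and strictly positive on the compact set $K$, it is bounded below there by some constant $c>0$, so this term is at most $c^{-(r+1)} M_1 (1+|s|^{r+1}) n^{-(r+1)/2}$. For the middle term, Lemma~\ref{lem:W_infty_W_T} gives the ($s$-independent) bound $\eee^{-\eps n} M_2$; since $C_{r,\eps} := \sup_{n\ge 1}\eee^{-\eps n} n^{(r+1)/2}<\infty$, this is at most $C_{r,\eps} M_2 (1+|s|^{r+1}) n^{-(r+1)/2}$. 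Adding the two estimates proves the proposition with $M_3 := c^{-(r+1)} M_1 + C_{r,\eps} M_2$, which is an a.s.\ finite random variable.

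This argument is essentially a bookkeeping exercise, so there is no real obstacle; the only points needing a little attention are reconciling the radii coming from the two lemmas (handled by passing to the smaller $\eps$) and the uniform positivity of $\sigma(\beta)$ over $\beta\in K$, which follows from continuity and $\varphi''>0$ on $\cD_\varphi$. Whatever difficulty there is lies upstream, in Lemma~\ref{lem:W_infty_W_T}, where Assumption~D is used to push $W_n$ into the complex plane and obtain the exponential rate; here that work is simply invoked.
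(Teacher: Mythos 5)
Your proof is correct and is exactly the paper's argument: the paper's proof consists of the single line ``Apply Lemmas~\ref{lem:W_infty_Taylor} and~\ref{lem:W_infty_W_T} with $u=is/(\sigma(\beta)\sqrt n)$,'' and your write-up just supplies the routine bookkeeping (triangle inequality, lower bound on $\sigma(\beta)$ over $K$, and absorbing $\eee^{-\eps n}$ into $n^{-(r+1)/2}$) that the authors leave implicit.
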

\begin{proof}
Apply Lemmas~\ref{lem:W_infty_Taylor} and~\ref{lem:W_infty_W_T} with $u=is/(\sigma(\beta) \sqrt n)$.
\end{proof}

The next proposition states the asymptotic expansion of the characteristic function $\psi_n(s;\beta)$.
\begin{proposition}\label{prop:char_funct_asymptotic_expansion}
Fix $r\in\N_0$ and a compact interval $K\subset (\beta_-,\beta_+)$. Then, for sufficiently large $n\in\N$, $\beta \in K$, and all real $|s|<n^{1/7}$ we have
\begin{equation}\label{eq:delta_r_est}
\Delta_r(s,n;\beta ) := \left|\psi_n(s;\beta ) - \eee^{-\frac 12 {s^2}} V_{r,n}(s;\beta )\right|
<
\frac{\delta_2(n)}{n^{r/2}} \eee^{-\frac 12 s^2} (1 + |s|^{4r+3}),
\end{equation}
where
\begin{equation}\label{eq:U_tilde_def}
V_{r,n}(s;\beta )
=
\sum_{j=0}^r \frac{1}{n^{j/2}} \sum_{m=0}^{j} \frac{W_{\infty}^{(m)}(\beta )}{m!}\left(\frac {is}{\sigma(\beta) }\right)^m\tilde P_{j-m}\left(\frac{is}{\sigma(\beta) };\beta \right)
\end{equation}
and $\delta_2(n)$ is a sequence of random variables (not depending on $s$ or $\beta $) such that $\lim_{n\to+\infty}\delta_2(n)=0$ a.s.
\end{proposition}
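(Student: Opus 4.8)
The plan is to multiply the two asymptotic expansions already in hand — the Edgeworth-type expansion of $f_n$ from Lemma~\ref{lem:est_f_T} and the Taylor-type expansion of $W_n\bigl(\beta+\tfrac{is}{\sigma(\beta)\sqrt n}\bigr)$ from Proposition~\ref{prop:est_W_T} — and collect the product into powers of $n^{-1/2}$. Recall from~\eqref{eq:psi_n_def} that $\psi_n(s;\beta)=f_n(s;\beta)\,W_n\bigl(\beta+\tfrac{is}{\sigma(\beta)\sqrt n}\bigr)$, so on the formal level
$$
\psi_n(s;\beta)\approx \eee^{-\frac12 s^2}\Bigl(\sum_{j\ge 0} n^{-j/2}\tilde P_j\bigl(\tfrac{is}{\sigma(\beta)};\beta\bigr)\Bigr)\Bigl(\sum_{m\ge 0} n^{-m/2}\tfrac{W_\infty^{(m)}(\beta)}{m!}\bigl(\tfrac{is}{\sigma(\beta)}\bigr)^m\Bigr),
$$
and the coefficient of $n^{-j/2}$ in the Cauchy product is exactly $\sum_{m=0}^j \tfrac{W_\infty^{(m)}(\beta)}{m!}\bigl(\tfrac{is}{\sigma(\beta)}\bigr)^m\tilde P_{j-m}\bigl(\tfrac{is}{\sigma(\beta)};\beta\bigr)$, which is the definition of $V_{r,n}$ in~\eqref{eq:U_tilde_def}. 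So the content is entirely in controlling the error, and the main point is to verify that the restriction $|s|<n^{1/7}$ is compatible with the hypotheses of both input lemmas and that the error bounds combine to the claimed form.

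First I would fix $r$ and $K\subset(\beta_-,\beta_+)$, choose $\eps>0$ small enough that Proposition~\ref{prop:est_W_T} applies, and note that for $|s|<n^{1/7}$ and $n$ large we have $|s|\le \eps\,\sigma(\beta)\sqrt n$ uniformly in $\beta\in K$ (since $\sigma(\beta)$ is bounded below on $K$), so Proposition~\ref{prop:est_W_T} is in force; likewise $|s|<n^{1/7}$ is exactly the range of Lemma~\ref{lem:est_f_T}. Write $f_n = A_n + \alpha_n$ and $W_n = B_n + \beta_n$, where $A_n(s;\beta)=\eee^{-\frac12 s^2}\sum_{j=0}^r n^{-j/2}\tilde P_j(\tfrac{is}{\sigma(\beta)};\beta)$ and $B_n(s;\beta)=\sum_{m=0}^r n^{-m/2}\tfrac{W_\infty^{(m)}(\beta)}{m!}(\tfrac{is}{\sigma(\beta)})^m$ are the truncated expansions, with
$$
|\alpha_n|\le \frac{\delta_1(n)}{n^{r/2}}\eee^{-\frac12 s^2}(1+|s|^{3r+3}),\qquad |\beta_n|\le \frac{M_3}{n^{(r+1)/2}}(1+|s|^{r+1}).
$$
Then $\psi_n - \eee^{-\frac12 s^2}V_{r,n} = (A_n B_n - \eee^{-\frac12 s^2}V_{r,n}) + \alpha_n B_n + A_n\beta_n + \alpha_n\beta_n$. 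The first bracket is the ``tail of the Cauchy product'': $A_n B_n$ differs from $\eee^{-\frac12 s^2}V_{r,n}$ only by terms $n^{-\ell/2}$ with $\ell>r$, each carrying a polynomial in $s$ of degree at most $3r+r=4r$ times $\eee^{-\frac12 s^2}$, so this contributes $\le C\,n^{-(r+1)/2}\eee^{-\frac12 s^2}(1+|s|^{4r})$ for a constant $C$ depending on $r,K$ and on $\sup_{\beta\in K}|W_\infty^{(m)}(\beta)|$ for $m\le r$ (an a.s.\ finite random variable by analyticity of $W_\infty$ on a neighbourhood of $K$, cf.~the proof of Lemma~\ref{lem:W_infty_Taylor}). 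For the cross terms: $|B_n|$ is bounded by $\eee^{\text{(polynomial in $|s|$)}}$... — more precisely $|B_n|\le C'(1+|s|^r)$ times an a.s.\ finite random variable, so $|\alpha_n B_n|\le \text{const}\cdot n^{-r/2}\eee^{-\frac12 s^2}(1+|s|^{4r+3})$; similarly $|A_n|\le \eee^{-\frac12 s^2}\cdot C''(1+|s|^{3r})$, whence $|A_n\beta_n|\le \text{const}\cdot n^{-(r+1)/2}\eee^{-\frac12 s^2}(1+|s|^{4r+3})$ — here one uses $\eee^{-\frac12 s^2}\le 1$ to absorb the Gaussian factor coming out of $A_n$ so that it still multiplies the $s$-polynomial from $\beta_n$; and $|\alpha_n\beta_n|$ is even smaller. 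Adding the four pieces and pulling out $n^{-r/2}\eee^{-\frac12 s^2}(1+|s|^{4r+3})$, the coefficient is $\delta_1(n)\cdot(\text{random, bounded in }n) + M_3/\sqrt n\cdot(\text{random, bounded}) + \text{(deterministic)}/\sqrt n =: \delta_2(n)$, which tends to $0$ a.s.\ since $\delta_1(n)\to 0$ and the $M$'s are a.s.\ finite and $n$-independent.

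The step I expect to be the main obstacle — or at least the one needing the most care — is bookkeeping the powers of $|s|$ so that the final exponent is exactly $4r+3$ and not something larger, and making sure the Gaussian factor $\eee^{-\frac12 s^2}$ is never lost when it is needed and never double-counted. In particular, in the term $A_n\beta_n$ the factor $\eee^{-\frac12 s^2}$ sits inside $A_n$, and one must decide whether to keep it (to get the stated form with $\eee^{-\frac12 s^2}$ in front) or to bound it by $1$ (to absorb the extra polynomial degree from $\beta_n$); the clean choice is to keep one copy of $\eee^{-\frac12 s^2}$ and bound any surplus polynomial $\times\eee^{-\frac12 s^2}$ by a constant, which is legitimate since $|s|^{k}\eee^{-\frac14 s^2}$ is bounded. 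A secondary subtlety is the uniformity of $\delta_2(n)\to 0$ over $\beta\in K$: this follows because $\delta_1(n)\to 0$ uniformly on $K$ (noted in the proof of Lemma~\ref{lem:est_f_T}) and because $M_3$ in Proposition~\ref{prop:est_W_T} and the sup-norms of the derivatives of $W_\infty$ over $K$ are single a.s.\ finite random variables independent of $\beta$. Once these points are handled, \eqref{eq:delta_r_est} follows.
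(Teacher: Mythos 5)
Your proposal is correct and follows essentially the same route as the paper: it combines Lemma~\ref{lem:est_f_T} and Proposition~\ref{prop:est_W_T} via the product decomposition $\psi_n=f_n\cdot W_n(\beta+\tfrac{is}{\sigma(\beta)\sqrt n})$, bounds the cross error terms using the a.s.\ boundedness of $W_n$ and of the derivatives of $W_\infty$ on a neighbourhood of $K$ together with the Gaussian factor, and separately estimates the tail ($l+m>r$) of the product of the two truncated expansions — exactly the structure of the paper's proof, with only cosmetic differences in how the error is split and in the explicit powers of $|s|$ (the paper gets $3r+3$ and $4r$, both dominated by the stated $4r+3$).
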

\begin{proof}
Let $\eps>0$ be sufficiently small to ensure that the statement of Proposition~\ref{prop:est_W_T} holds.
The subsequent estimates are uniform in $\beta\in K$.
We use the inequality
$$
|A_1B_1-A_2B_2| \leq |A_1-A_2| |B_1| + |A_2||B_1-B_2|,
$$
which is an easy consequence of the triangle inequality, with
\begin{alignat*}{2}
A_1 &= f_n(s;\beta ),
&\quad
B_1 &= W_{n}\left(\beta  + \frac{is}{\sigma(\beta) \sqrt n}\right),\\
A_2 &= \eee^{-\frac 12 {s^2}} \sum_{j=0}^r \frac{1}{n^{j/2}} \tilde P_j \left(\frac{is}{\sigma(\beta) }; \beta \right),
&\quad
B_2 &= \sum_{j=0}^r \frac{1}{n^{j/2}}\frac{W_\infty^{(j)}(\beta )}{j!} \left(\frac{is}{\sigma(\beta) }\right)^j.
\end{alignat*}
Recall from~\eqref{eq:psi_n_def} that $\psi_n(s;\beta )= A_1B_1$. Condition $|s|<n^{1/7}$ implies that (for sufficiently
large $n$) we have $|s|\leq \eps \sigma(\beta)  \sqrt n$ and hence Lemma~\ref{lem:est_f_T} and Proposition~\ref{prop:est_W_T} yield the estimates
$$
|A_1-A_2| \leq  \frac{\delta_1(n)}{n^{r/2}} \eee^{-\frac 1 2 {s^2}} (1 + |s|^{3r+3}),
\quad
|B_1-B_2| \leq \frac{M_3}{n^{\frac{r+1} 2}} (1+ |s|^{r+1}).
$$
In particular, since $\tilde P_j$ has degree $3j$ and $|s|<n^{1/7}$, we have the estimates
$$
|B_2|< M_4,
\quad
|B_1| < M_4,
\quad
|A_2| < C \eee^{-\frac 12 {s^2}},
$$
where $M_4$ is a.s.\ finite random variable and $C>0$ is a constant. By the above estimates it follows that
\begin{align}
\left| \psi_n(s;\beta ) -A_2B_2 \right|
&\leq
\frac{\delta_1(n)}{n^{r/2}} \eee^{-\frac 12 {s^2}} (1 + |s|^{3r+3}) M_4 + C \eee^{-\frac 12 s^2}
\frac{M_3}{n^{\frac {r+1}2}} (1+ |s|^{r+1})\label{eq:tech_psi_exp_1}\\
&\leq
\frac{\delta_3(n)}{n^{r/2}} \eee^{-\frac 12 {s^2}} (1 + |s|^{3r+3}),\notag
\end{align}
where $\delta_3(n)$ is an a.s.\ finite random variable and $\lim_{n\to+\infty}\delta_3(n)=0$ a.s. Also, in the double sum
$$
A_2B_2 = \eee^{-\frac 12 s^2}
\sum_{l=0}^r \sum_{m=0}^r
\frac{1}{n^{(l+m)/2}} \tilde P_l \left(\frac{is}{\sigma(\beta) }; \beta \right) \frac{W_\infty^{(m)}(\beta )}{m!} \left(\frac{is}{\sigma(\beta) }\right)^m
$$
the sum of all terms having $j := l + m \leq r$ is $\eee^{-\frac 12 s^2} V_{r,n} (s;\beta )$. The sum of the remaining terms can be estimated by
\begin{equation}\label{eq:tech_psi_exp_2}
|A_2B_2 - \eee^{-\frac 12 s^2} V_{r,n} (s;\beta )| < \frac{M_5}{n^{\frac {r+1}2}} \eee^{-\frac 12 s^2} (1+|s|^{4r}) .
\end{equation}
The statement of the proposition follows from~\eqref{eq:tech_psi_exp_1} and~\eqref{eq:tech_psi_exp_2}.
\end{proof}

\subsection{Proof of Theorem~\ref{theo:asympt_expansion_BRW}}\label{sec:prooftheo2.1}
Recall from~\eqref{eq:x_n_k_def} that
$$
x_n(k)=\frac{k-\mu(\beta) n}{\sigma(\beta) \sqrt{n}}.
$$
We can write the characteristic function of $\nu_n$, see~\eqref{eq:psi_n_def}, as follows:
$$
\psi_n(s;\beta ) = \eee^{-\varphi(\beta )n} \sum_{k\in\Z}  \eee^{is x_n(k)} \eee^{\beta  k} L_n(k).
$$
This function is periodic in $s$ with period $2\pi \sigma(\beta)\sqrt n$. Inverting the Fourier transform we obtain that for $k\in\Z$,
\begin{equation}\label{eq:L_T_k_Fourier}
\sigma(\beta)  \sqrt{n}\, \eee^{\beta k-\varphi(\beta)n} L_n(k)
=
\frac 1{2\pi} \int_{-\pi \sigma(\beta)  \sqrt n}^{+\pi \sigma(\beta)  \sqrt n} \psi_n(s;\beta ) \eee^{- is x_n(k)} \dd s.
\end{equation}
Finally, we are ready to prove the asymptotic expansion of $L_n(k)$. To this end, we will insert the expansion of $\psi_n$ into~\eqref{eq:L_T_k_Fourier}. Let $D=\frac{\dd}{\dd z}$ be the differentiation operator. Define polynomials $Q_{m,j}$, $j\in\N_0$, $m=0,\ldots,j$, by
\begin{equation}\label{eq:Q_m_j_def}
\left(-\frac{D}{\sigma(\beta) }\right)^m \tilde P_{j-m} \left(-\frac{D}{\sigma(\beta) };\beta \right)  \eee^{-\frac 12 z^2}
=
Q_{m,j} (z;\beta)  \eee^{-\frac 12 z^2}.
\end{equation}
Note that the function $q_j(z;\beta)$ appearing in the classical Edgeworth expansion~\eqref{eq:asympt_expansion_intensity} is given by
\begin{equation}\label{eq:q_j_formula}
\tilde P_{j} \left(-\frac{D}{\sigma(\beta) };\beta \right)  \eee^{-\frac 12 z^2}
=
q_{j} (z;\beta)  \eee^{-\frac 12 z^2},
\end{equation}
so that $q_j(z;\beta) = Q_{0,j}(z;\beta)$.
\begin{remark}\label{rem:odd}
It follows that both $Q_{m,j}(z;\beta)$ and $q_j(z;\beta)$ are linear combinations of Hermite polynomials $\Herm_k(z)$ with coefficients depending on $\beta$. For even (resp.\ odd) $j$, all Hermite polynomials involved have even (resp.\ odd) index $k$ and hence the functions $Q_{m,j}$ and $q_j$ are even (resp.\ odd).
\end{remark}

Fix $r\in\N_0$ and a compact interval $K \subset (\beta_-,\beta_+)$. Our aim is to prove that
\begin{equation}\label{eq:proof_asympt_main}
     \sup_{\beta\in K} \sup_{k\in\Z}n^{\frac {r}2}
               \left|\sigma(\beta)\sqrt n\, \eee^{\beta k-\varphi(\beta)n} L_n(k)
                       -
      \frac{\eee^{-\frac 12 x_n^2(k)}}{\sqrt{2\pi}} U_{r,n}(x_n(k);\beta )
     \right| \toas 0,
\end{equation}
where
\begin{equation}\label{eq:U_r_n_def}
U_{r,n}(z;\beta )
=
\sum_{j=0}^{r} \frac{1}{n^{j/2}} \sum_{m=0}^j \frac{W_\infty^{(m)}(\beta)}{m!} Q_{m,j}(z;\beta ).
\end{equation}
The idea is that on the right-hand side of~\eqref{eq:L_T_k_Fourier} we can approximate $\psi_n(s;\beta)$ by $\eee^{-\frac 12 s^2} V_{r,n}(s;\beta)$  and the integration range can be replaced by $\R$. Namely, we will show below that
\begin{equation}\label{eq:tech_main}
n^{\frac r2} \sup_{\beta\in K} \sup_{k\in\Z} \left|\int_{-\pi \sigma(\beta)  \sqrt n}^{+\pi \sigma(\beta)  \sqrt n} \psi_n(s;\beta ) \eee^{- is x_n(k)} \dd s - \int_{\R} \eee^{-\frac 12 s^2} V_{r,n}(s;\beta)\eee^{- is x_n(k)} \dd s\right| \toas 0.
\end{equation}
The main tool in the proof of~\eqref{eq:tech_main} is Proposition~\ref{prop:char_funct_asymptotic_expansion}. Given that~\eqref{eq:tech_main} holds, we can replace the integral on the right-hand side of~\eqref{eq:L_T_k_Fourier} by
$$
\frac 1 {2\pi} \int_{\R} \eee^{-\frac 12 s^2} V_{r,n}(s;\beta ) \eee^{-is x_n(k)} \dd s = \frac 1{\sqrt{2\pi}} \eee^{-\frac 12 x_n^2(k)} U_{r,n}(x_n(k);\beta ),
$$
where $U_{r,n}(z;\beta)$ is as in~\eqref{eq:U_r_n_def}. To see this, recall~\eqref{eq:U_tilde_def} and note that  for all $l\in\N_0$,
\begin{align*}
\frac{1}{2\pi} \int_{\R} \eee^{-\frac 12 s^2} \left(\frac{is}{\sigma(\beta) }\right)^l \eee^{-isz} \dd s
&=
\frac{1}{2\pi} \int_{\R} \eee^{-\frac 12 s^2} \left(-\frac{D}{\sigma(\beta) }\right)^l \eee^{-isz} \dd s\\
&=
\frac 1{\sqrt {2\pi}} \left(-\frac{D}{\sigma(\beta) }\right)^l  \eee^{-\frac 12 z^2}.
\end{align*}
Further,
\begin{equation*}
  s\ \mapsto \ \left(\frac{is}{\sigma(\beta)}\right)^m \tilde P_{j-m} \left(\frac{is}{\sigma(\beta)};\beta\right) \eee^{-\frac 12 s^2}
\end{equation*}
is the Fourier transform of the function
\begin{equation*}
  z\ \mapsto\  \frac 1 {\sqrt{2\pi}} \left(-\frac{D}{\sigma(\beta) }\right)^m \tilde P_{j-m} \left(-\frac{D}{\sigma(\beta) };\beta \right)  \eee^{-\frac 12 z^2}.
\end{equation*}
Hence, using ~\eqref{eq:Q_m_j_def} and Fourier inversion,
$$
\frac{1}{2\pi} \int_{\R} \eee^{-\frac 12 s^2} \left(\frac{is}{\sigma(\beta)}\right)^m \tilde P_{j-m} \left(\frac{is}{\sigma(\beta)};\beta\right) \eee^{-isz} \dd s
=
\frac 1{\sqrt {2\pi}} Q_{m,j}(z;\beta) \eee^{-\frac 12 z^2}.
$$
In the rest of the proof we establish~\eqref{eq:tech_main}.

\vspace*{2mm}
\noindent
\textsc{Step 1.} It follows from Proposition~\ref{prop:char_funct_asymptotic_expansion} that
\begin{equation}\label{eq:proof_main_tech1}
n^{r/2} \sup_{\beta\in K} \int_{-n^{1/7}}^{n^{1/7}} \left|\psi_n(s;\beta ) - \eee^{-\frac 12 {s^2}} V_{r,n}(s;\beta )\right| \dd s  \toas 0.
\end{equation}

\vspace*{2mm}
\noindent
\textsc{Step 2.} We prove that for any $a>0$ small enough
\begin{equation}\label{eq:proof_main_tech1a}
n^{r/2} \sup_{\beta\in K} \int_{n^{1/7} \leq  |s|\leq a \sqrt n} |\psi_n(s;\beta)| \dd s  \toas 0.
\end{equation}
The functions $W_n$, $n\in\N$, can be bounded by an a.s.\ finite random variable in some neighborhood of $K$. By~\eqref{eq:psi_n_def}, it is therefore sufficient to prove that
\begin{equation}\label{eq:proof_main_tech1a1}
n^{r/2} \sup_{\beta\in K} \int_{n^{1/7} \leq  |s|\leq a \sqrt n} |f_n(s;\beta)| \dd s  \to 0.
\end{equation}
Recall that $f_n(s;\beta)$ is the characteristic function of $(Z_1^*+\ldots+Z_n^*)/\sqrt n$, where $Z_i^*:=(Z_i-\mu(\beta) )/\sigma(\beta) $ are i.i.d.\ random variables with distribution given in~\eqref{eq:Z_1_distr}. Note that $\E Z_i^*=0$, $\Var Z_i^*=1$, and $c:=\E |Z_i^*|^3$ is bounded as long as $\beta\in K$. By Lemma~12 from~\cite{petrov_book} (where we take $b=1/2$) we have, for $|s|<\sqrt n/(2c)$, the estimate
$$
\sup_{\beta\in K} |f_n(s;\beta )| \leq \eee^{-\frac 16 s^2}.
$$
This implies~\eqref{eq:proof_main_tech1a1} with any $a<1/(2c)$.

\vspace*{2mm}
\noindent
\textsc{Step 3.} We prove that for a sufficiently small $a>0$,
\begin{equation}\label{eq:proof_main_tech1b}
n^{r/2} \sup_{\beta\in K} \int_{a\sqrt n \leq  |s|\leq \pi \sigma(\beta)  \sqrt n} |\psi_n(s;\beta)| \dd s  \toas 0.
\end{equation}
The main difficulty of this step is that, on the range in~\eqref{eq:proof_main_tech1b}, we cannot claim that the functions $W_n$, $n\in\N$, are  uniformly bounded by some a.s.\ finite random variable. Instead, we shall employ an estimate on the moments of $W_n$ obtained in Lemma~2 of~\cite{biggins_uniform}.
We consider only $s\in [a\sqrt n, \pi \sigma(\beta)\sqrt n]$, the case of negative $s$ being analogous.  Write $u:= s/(\sigma(\beta) \sqrt n) \in [a/\sigma(\beta), \pi]$ and $z:= \beta + iu$, so that $\beta =\Re z$. By~\eqref{eq:psi_n_def} and~\eqref{eq:def_f_n} we have
$$
|\psi_n(s;\beta)| = \frac{|W_n(z)|} {\eee^{n \varphi(\Re z) - n \Re \varphi(z)}} .
$$
Let $a_0 = \inf_{\beta \in K} (a/\sigma(\beta))>0$. To establish~\eqref{eq:proof_main_tech1b} it suffices to prove that
\begin{equation}\label{eq:proof_step4_tech1}
n^{(r+1)/2} \sup_{\beta\in K} \sup_{a_0 \leq  u \leq \pi}  \frac{|W_n(z)|}{\eee^{n \varphi(\Re z) - n \Re \varphi(z)}}  \toas 0.
\end{equation}

We shall show that for every $z_0\in\C$ such that $\Re z_0\in (\beta_-,\beta_+)$ and $\Im z_0 \in [a_0, \pi]$, there are  sufficiently small $\eps_0,\delta_0>0$ and sufficiently large $C_0$ (all quantities depending on $z_0$) such that for all $n\in\N$,
\begin{equation}\label{eq:proof_step4_tech2}
\E \left[ \sup_{z\in\bD_{\eps_0}(z_0)}  \frac{|W_n(z)|}{\eee^{n \varphi(\Re z) - n \Re \varphi(z)}}\right]  < C_0 \eee^{-\delta_0 n},
\end{equation}
where $\bD_{\eps_0}(z_0) = \{z\in \C\colon |z-z_0| < \eps_0\}$.
Indeed, given~\eqref{eq:proof_step4_tech2}, we can cover the compact set $K\times [a_0,\pi]\subset \C$ by finitely many disks of the form $\bD_{\eps_0}(z_0) $, and then use Markov's inequality to obtain~\eqref{eq:proof_step4_tech1}.

In the following we prove~\eqref{eq:proof_step4_tech2}. We have $\varphi'(\Re z_0) \Re z_0 <\varphi(\Re z_0)$  by the assumption  $\Re z_0\in (\beta_-,\beta_+)$ and the definition of the interval $(\beta_-,\beta_+)$ given in~\eqref{eq:beta_-}, \eqref{eq:beta_+}. Hence,  we can find $\alpha\in (1,2)$ close to $1$ such that $\alpha \Re z_0 \in (\beta_-,\beta_+)$ and  $\alpha \varphi(\Re z_0) - \varphi(\alpha \Re z_0) >0$. Further, by Assumption~E, $\varphi(\Re z_0)>\Re \varphi(z_0)$ because $\Im z_0 \in [a_0,\pi]$ and $a_0>0$.  (This is the only place in the proof where we use Assumption~E).  Therefore, we can choose $\delta_0>0$ such that
\begin{equation}\label{eq:delta_0_def}
2\delta_0 < \min\{\varphi(\Re z_0) - \alpha^{-1}\varphi(\alpha \Re z_0), \varphi(\Re z_0) - \Re \varphi(z_0)\}.
\end{equation}
For $z\in\C$ such that $\alpha \Re z \in (\beta_-,\beta_+)$ define
$$
\kappa(z) = \eee^{\varphi(\alpha \Re z) - \alpha \Re \varphi(z)}.
$$
By the continuity of the functions $\varphi$ and $\kappa$, for $\eps_0>0$ small enough, the disk $\bD_{3\eps_0} (z_0)$ is contained in the strip $\{z\in\C\colon \alpha\Re z \in (\beta_-,\beta_+)\}$ and for all $z\in \bD_{3\eps_0} (z_0)$,
\begin{equation}\label{eq:proof_step4_est1}
\eee^{n \varphi(\Re z) - n \Re \varphi(z)}
\geq
\eee^{n \varphi(\Re z_0) - n \Re \varphi(z_0) - n\delta_0/3},
\quad
\kappa(z) \leq \eee^{\delta_0/3} \kappa(z_0).
\end{equation}
By the Cauchy integral formula, see Lemma~3 in~\cite{biggins_uniform},
\begin{equation}\label{eq:proof_step4_est2}
\sup_{z\in\bD_{\eps_0}(z_0)}  |W_n(z)|
\leq
\frac 1 \pi \int_{0}^{2\pi}  |W_n(z_0 + 2\eps_0 \eee^{i \theta})| \dd \theta.
\end{equation}
Combining estimates~\eqref{eq:proof_step4_est1} and~\eqref{eq:proof_step4_est2} and using Jensen's inequality afterwards, we obtain
\begin{align*}
\E \left[ \sup_{z\in\bD_{\eps_0}(z_0)}  \frac{|W_n(z)|}{\eee^{n \varphi(\Re z) - n \Re \varphi(z)}}\right]
&\leq
\frac {\int_{0}^{2\pi} \E  |W_n(z_0 + 2\eps_0 \eee^{i \theta})| \dd \theta} {\pi\eee^{n \varphi(\Re z_0) - n \Re \varphi(z_0) - n\delta_0/3}}\\
&\leq
\frac {\int_{0}^{2\pi} (\E  |W_n(z_0 + 2\eps_0 \eee^{i \theta})|^{\alpha})^{1/\alpha} \dd \theta} {\pi  \eee^{n \varphi(\Re z_0) - n \Re \varphi(z_0) - n\delta_0/3}}.
\end{align*}
By Lemma 2(ii) from~\cite{biggins_uniform} (which uses Assumption~D), uniformly over $\theta\in [0,2\pi]$ it holds that
\begin{multline*}
(\E  |W_n(z_0 + 2\eps_0 \eee^{i \theta})|^{\alpha})^{1/\alpha}
\leq C \left(\sum_{r=0}^{n-1} \kappa^r(z_0 + 2\eps_0 \eee^{i \theta})\right)^{1/\alpha}\\
\leq C  \eee^{n\delta_0/3} \left(\sum_{r=0}^{n-1} \kappa^r(z_0)\right)^{1/\alpha}
\leq C' \eee^{n\delta_0/2} (\kappa(z_0) \vee 1)^{n/\alpha},
\end{multline*}
where in the second inequality we applied~\eqref{eq:proof_step4_est1}.
From the above two estimates and the definition of $\kappa$ it follows that
\begin{multline*}
\E \left[ \sup_{z\in\bD_{\eps_0}(z_0)}  \frac{|W_n(z)|}{\eee^{n \varphi(\Re z) - n \Re \varphi(z)}}\right]\\
\leq
C'' \eee^{n\delta_0}  \left( \eee^{\alpha^{-1}\varphi(\alpha \Re  z_0) - \varphi(\Re z_0)} \vee \eee^{\Re \varphi(z_0) - \varphi(\Re z_0)}\right)^{n}
\leq
C''\eee^{-n\delta_0},
\end{multline*}
where in the last step we used~\eqref{eq:delta_0_def}.  This completes the proof of~\eqref{eq:proof_step4_tech2}.


\vspace*{2mm}
\noindent
\textsc{Step 4.} We show that
\begin{equation}\label{eq:proof_main_tech2}
n^{r/2} \int_{|s|>n^{1/7}} \eee^{-\frac 12 s^2} |V_{r,n}(s;\beta)| \dd s \toas 0.
\end{equation}
By~\eqref{eq:U_tilde_def} we have that for $n\in\N$,
$$
|V_{r,n}(s;\beta )| < M (1+|s|^{4r}),
$$
where $M$ is an a.s.\ finite random variable. This easily implies~\eqref{eq:proof_main_tech2}.

\vspace*{2mm}
Taken together, the results of Steps 1--4 imply~\eqref{eq:tech_main} and thus complete the proof of
Theorem~\ref{theo:asympt_expansion_BRW}.

\subsection{Proof of Proposition~\ref{prop:CLT_global_speed}}
The idea is to take sums in Theorem~\ref{theo:asympt_expansion_BRW} and then to approximate these sums by Riemann integrals.

\vspace*{2mm}
\noindent
\textsc{Step 0.}
By Theorem~\ref{theo:asympt_expansion_BRW} with $r=3$ we have
\begin{equation}\label{eq:proof_berry_esseen_exp}
\frac{L_n(h)}{W_\infty(0) m^n}
=
\frac{\eee^{-\frac 12 x_n^2(h)}}{\sigma(0) \sqrt{2\pi n}}
\left(
1 + \frac{F_1(x_n(h);0)}{W_\infty(0)\sqrt n} + \frac{F_2(x_n(h);0)}{W_\infty(0) n}
\right)
+ o\left(\frac 1{n^{3/2}}\right) \; \text{a.s.},
\end{equation}
where the $o$-term is uniform in $h\in\Z$ and in the formula for $x_n(h)$ we take $\beta=0$.

\vspace*{2mm}
\noindent
\textsc{Step 1.}
By the standard error term analysis for the trapezoidal rule, for any integers $k_1<k_2$ and every function $f\in C^2[k_1,k_2]$ we have
$$
\sum_{h=k_1}^{k_2} f(h) \,= \, \frac 12 f(k_1) + \frac 12 f(k_2) + \int_{k_1}^{k_2} f(t)\dd t + \frac 1{12} \sum_{h=k_1+1}^{k_2} f''(\xi_h),
$$
where $\xi_h\in [h-1,h]$. Using this formula for the function $f(x_n(h))$ instead of $f(h)$ we obtain
\begin{align*}
\sum_{h=k_1}^{k_2} f(x_n(h))
\ &=\
\frac 12 f(x_n(k_1)) + \frac 12 f(x_n(k_2)) + \sigma(0) \sqrt n \int_{x_n(k_1)}^{x_n(k_2)} f(z)\dd z\\
               &\hspace{4.55cm} + \frac {\sum_{h=k_1+1}^{k_2} f''(x_n(\xi_h))}{12 \sigma^2(0) n} .
\end{align*}
In our applications, $f''$ decays exponentially so that we can estimate the error term involving the sum of $f''(\xi_h)$ by $O(1/\sqrt n)$ uniformly in $k_1,k_2$. In particular, uniformly over $|k|<[n^{3/4}]$ we have
\begin{align}
&\sum_{h=-[n^{3/4}]}^k  \eee^{-\frac 12 x_n^2(h)}
=
\frac 12 \eee^{-\frac 12 x_n^2(k)}
+
\sigma(0) \sqrt n \int_{-\infty}^{x_n(k)}  \eee^{-\frac 12 z^2} \dd z + O\left(\frac 1{\sqrt n}\right),\label{eq:berry_esseen_trap1}\\
&\sum_{h=-[n^{3/4}]}^k  \eee^{-\frac 12 x_n^2(h)} F_1(x_n(h);0)
=
\sigma(0) \sqrt n \int_{-\infty}^{x_n(k)} \eee^{-\frac 12 z^2} F_1(z;0)\dd z
+O(1) \;\;\; \text{a.s.},\label{eq:berry_esseen_trap2}\\
&\sum_{h=-[n^{3/4}]}^k \eee^{-\frac 12 x_n^2(h)} F_2(x_n(h);0) = O(\sqrt n) \;\;\; \text{a.s.}, \label{eq:berry_esseen_trap3}
\end{align}
where we dropped the integrals over the range $z<-n^{3/4}$ because of
$$
\int_{|z|\geq [n^{3/4}]}  \eee^{-\frac 12 z^2} \dd z =o(n^{-C}),
\quad
\int_{|z|\geq [n^{3/4}]}  \eee^{-\frac 12 z^2} F_1(z;0) \dd z =o(n^{-C}) \;\;\text{a.s.}
$$
for every $C>0$. Recalling the formula for $F_1(z;0)$, see~\eqref{eq:F_1_BRW}, we obtain
\begin{equation}\label{eq:berry_esseen_int}
\int_{-\infty}^{x_n(k)} \eee^{-\frac 12 z^2} F_1(z;0)\dd z
=
-\frac{\kappa_3(0)W_\infty(0)}{6\sigma^3(0)} \Herm_2(x_n(k)) - \frac{W_\infty'(0)}{\sigma(0)}.
\end{equation}
Let $|k|<[n^{3/4}]$. Taking in~\eqref{eq:proof_berry_esseen_exp} the sum over $h=-[n^{3/4}],\ldots,k$ and using~\eqref{eq:berry_esseen_trap1}, \eqref{eq:berry_esseen_trap2}, \eqref{eq:berry_esseen_trap3}, \eqref{eq:berry_esseen_int}, we obtain
\begin{multline}\label{eq:berry_esseen_cuttof}
\frac 1 {W_\infty(0) m^n} \sum_{h=-[n^{3/4}]}^{k} L_n(h) - \frac {1}{\sqrt{2\pi}} \int_{-\infty}^{x_n(k)} \eee^{-\frac 12 z^2} \dd z =
\\
 \frac {\eee^{-\frac 12 x_n^2(k)}} {\sigma(0) \sqrt {2\pi n} }
\left(\frac 12 - \frac{\kappa_3(0)}{6\sigma^2(0)} (x_n^2(k) -1) - \frac {W_\infty'(0)}{W_\infty(0)}\right) + o\left(\frac 1 {\sqrt n}\right)
\quad\text{a.s.}
\end{multline}
Note that the error term $o(n^{-3/2})$ in~\eqref{eq:proof_berry_esseen_exp} taken $n^{3/4}$ times can be estimated by $o(1/\sqrt n)$.

\vspace*{2mm}
\noindent
\textsc{Step 2.}
It remains to estimate $\sum_{|h|\geq [n^{3/4}]} L_n(h)$.  It is an easy consequence of the Chernoff inequality that for every $C>0$,
$$
\frac 1 {m^n} \sum_{|h| \geq  [n^{3/4}]} \E L_n(h) = o(n^{-(C+2)}).
$$
By the Borel--Cantelli lemma and the Markov inequality,
\begin{equation}\label{eq:berry_esseen1}
\frac 1 {m^n} \sum_{|h| \geq  [n^{3/4}]} L_n(h) = o(n^{-C}) \quad \text{a.s.}
\end{equation}
This completes the proof of Proposition~\ref{prop:CLT_global_speed}.

\subsection{Proof of Theorem~\ref{theo:L_n_lim_distr_2_new}}
Recall that, in the first part of the theorem,
$k_n$ is an integer sequence such that for some $\alpha\in\R$,
$$
k_n = \varphi'(\beta) n + \alpha \sigma(\beta) \sqrt {n} + o(\sqrt n), \quad n\to\infty.
$$
Note that $x_n := x_n(k_n) = \alpha + o(1)$; see~\eqref{eq:x_n_k_def}.
By Corollary~\ref{cor:asympt_expansion_BRW_without_expectation} with $r=1$, we have
\begin{equation} \label{eq:tech_expansion_r1_without_E}
\eee^{\beta k_n -\varphi(\beta)n} \sqrt{2\pi n} \,\sigma(\beta) L_n^{\circ}(k_n)
=
\eee^{-\frac12 x_n^2}\cdot \frac{1}{\sqrt n} F_1^\circ(x_n;\beta) + o\left(\frac 1{\sqrt n}\right) \quad \text{a.s.},
\end{equation}
Here, $F_1^\circ$ is given by~\eqref{eq:F_1_circ_BRW}, hence
\begin{equation}\label{eq:tech_F1_circ_first}
F_1^\circ(x_n;\beta) = \frac{x_n}{\sigma(\beta)} W_\infty'(\beta) = \frac{\alpha}{\sigma(\beta)} W_\infty'(\beta) + o(1) \quad\text{a.s.}
\end{equation}
Inserting~\eqref{eq:tech_F1_circ_first} into~\eqref{eq:tech_expansion_r1_without_E}, we obtain
$$
\eee^{\beta k_n - \varphi(\beta)n} \sqrt{2\pi n}\, \sigma(\beta) L_n^{\circ}(k_n)
=
\frac{\alpha}{\sigma(\beta)\sqrt n} \eee^{-\frac12 \alpha^2} W_\infty'(\beta) + o\left(\frac 1{\sqrt n}\right) \quad \text{a.s.}.
$$
This proves the first part of Theorem~\ref{theo:L_n_lim_distr_2_new}.

Now let $k_n$ be an integer sequence such that
$$
k_n = \varphi'(\beta) n + c_n, \text{ where } c_n=o(\sqrt n).
$$
Note that
\begin{equation}\label{eq:tech_x_n_k}
x_n := x_n(k_n) = \frac{c_n}{\sigma(\beta) \sqrt n} = o(1).
\end{equation}
By Corollary~\ref{cor:asympt_expansion_BRW_without_expectation} with $r=2$, we have
\begin{align}
\lefteqn{\eee^{\beta k_n - \varphi(\beta) n}\sqrt{2\pi n}\, \sigma(\beta)  L_n^{\circ}(k_n)}
\label{eq:tech_expansion_r2_without_E}\\
&=
\eee^{-\frac12 x_n^2} \left[\frac{1}{\sqrt n} F_1^\circ(x_n;\beta) + \frac 1n F_2^\circ(x_n;\beta)\right] + o\left(\frac 1n\right) \quad \text{a.s.},\notag
\end{align}
where $F_1^\circ$ and $F_2^\circ$ are given by equations~\eqref{eq:F_1_circ_BRW} and~\eqref{eq:F_2_circ_BRW}.
Using these equations together with~\eqref{eq:tech_x_n_k}, we see that the asymptotic expressions for $F_1^\circ$ and $F_2^\circ$ look as follows:
\begin{align}
\frac{1}{\sqrt n} F_1^\circ(x_n;\beta)
&=
\frac {c_n} {\sigma^2(\beta) n} W_\infty'(\beta),\label{eq:tech_F1_circ}\\
\frac 1n F_2^\circ(x_n; \beta)
&=
\frac 1n \left(\frac{\kappa_3(\beta)}{2\sigma^4(\beta)}W_\infty'(\beta) - \frac{1}{2\sigma^2(\beta)}W_\infty''(\beta)\right) +  o\left(\frac 1n\right)\quad \text{a.s.} \label{eq:tech_F2_circ}
\end{align}
Here we used that $\Herm_4(0)=3$ and $\Herm_2(0)=-1$; see~\eqref{eq:Herm1}, \eqref{eq:Herm2}. Also, note that by~\eqref{eq:tech_x_n_k},
\begin{equation}\label{eq:tech_Taylor_circ}
\eee^{-\frac 12 x_n^2}
=
1+o(1).
\end{equation}
Now we consider two cases.

\vspace*{2mm}
\noindent
\textsc{Case 1.}
Let $c_n\to +\infty$ but $c_n=o(\sqrt n)$.
Inserting~\eqref{eq:tech_F1_circ}, \eqref{eq:tech_F2_circ}, \eqref{eq:tech_Taylor_circ} into~\eqref{eq:tech_expansion_r2_without_E}, we obtain
$$
\eee^{\beta k_n - \varphi(\beta) n}\sqrt{2\pi n}\, \sigma(\beta) L_n^{\circ}(k_n) =
\frac{c_n}{\sigma^2(\beta) n}W_\infty'(\beta) +  o\left(\frac{c_n}{n}\right) \quad \text{a.s.}
$$
This proves the second part Theorem~\ref{theo:L_n_lim_distr_2_new}.

\vspace*{2mm}
\noindent
\textsc{Case 2.}
Let $c_n=O(1)$.
Inserting~\eqref{eq:tech_F1_circ}, \eqref{eq:tech_F2_circ}, \eqref{eq:tech_Taylor_circ} into~\eqref{eq:tech_expansion_r2_without_E}, we obtain
\begin{multline}
\eee^{\beta k_n - \varphi(\beta)n} \sqrt{2\pi n}\, \sigma(\beta) L_n^{\circ}(k_n) \\
=
\frac 1n\left( \left(\frac {c_n} {\sigma^2(\beta)} + \frac{\kappa_3(\beta)}{2\sigma^4(\beta)}\right) W_\infty'(\beta) - \frac{1}{2\sigma^2(\beta)}W_\infty''(\beta)\right) + o\left(\frac 1n\right) \quad\text{a.s.}
\end{multline}
This proves the third part of Theorem~\ref{theo:L_n_lim_distr_2_new}.

\subsection{Proof of Theorem~\ref{theo:mode_height}}\label{subsec:proof_mode_width}
We start by proving the formula for the mode $u_n$ stated in the first part of the theorem.
We search for $k\in\Z$ maximizing the occupation number $L_n(k)$.

\vspace*{2mm}
\noindent
\textsc{Step 0.}
We write $k=k_n(a)=\varphi'(0)n + a$, where $a$ is subject to the restriction $a\in \Z - \varphi'(0) n$.  Let $\beta=0$ and  $\sigma^2 = \varphi''(0)$.  With this notation, we have
$$
x_n := x_n(k)= \frac{a}{\sigma\sqrt {n}}.
$$
By Theorem~\ref{theo:asympt_expansion_BRW} with $r=2$, we have
\begin{equation}\label{eq:tech_expansion_r2}
m^{-n}\sqrt{2\pi n}\, \sigma L_n(k)
=
\eee^{-\frac12 x_n^2} \left(W_\infty(0) + \frac{1}{\sqrt n} F_1(x_n;0) + \frac 1n F_2(x_n;0)\right) + o\left(\frac 1n\right) \quad \text{a.s.},
\end{equation}
where $F_1$ and $F_2$ are given by equations~\eqref{eq:F_1_BRW} and~\eqref{eq:F_2_BRW}.
The $o$-term is uniform over $a\in \Z - \varphi'(0) n$.

\vspace*{2mm}
\noindent
\textsc{Step 1.} Let us first assume that $|a| < n^{1/4-\eps}$ for some small $\eps>0$. Later we will show that, asymptotically, the values of $a$ outside this range have no chance to be the mode. Let us agree that all estimates will be uniform over $|a| < n^{1/4-\eps}$.
Recalling~\eqref{eq:F_1_BRW} and~\eqref{eq:F_2_BRW} and noting that $x_n^3 = o(n^{-1/2})$ and $x_n=o(1)$ we obtain
\begin{align}
\frac{1}{\sqrt n} F_1(x_n;0)
&=
\frac a n \left( \frac 1 {\sigma^2} W_\infty'(0) - \frac{\kappa_3(0)}{2\sigma^4} W_\infty(0)\right) + o\left(\frac 1n\right)\quad\text{a.s.},\label{eq:tech_F1}\\
\frac 1n F_2(x_n; 0)
&=
\frac 1n C +  o\left(\frac 1n\right)\quad \text{a.s.} \label{eq:tech_F2},
\end{align}
where
\begin{equation}\label{eq:tech_C}
C= \left(\frac{\kappa_4(0)}{8\sigma^4} - \frac{5\kappa_3^2(0)}{24\sigma^6}\right)W_\infty(0) + \frac{\kappa_3(0)}{2\sigma^4}W_\infty'(0) - \frac{1}{2\sigma^2}W_\infty''(0)
\end{equation}
and we used that $\Herm_4(0)=3$, $\Herm_6(0)=-15$, $\Herm_2(0)=-1$; see~\eqref{eq:Herm1}, \eqref{eq:Herm2}.  Noting that $x_n^4=o(1/n)$, we obtain by Taylor's formula that
\begin{equation}\label{eq:tech_Taylor}
\eee^{-\frac 12 x_n^2}
=
1 - \frac {a^2}{2\sigma^2 n} + o\left(\frac 1n\right).
\end{equation}
Inserting~\eqref{eq:tech_F1}, \eqref{eq:tech_F2}, \eqref{eq:tech_Taylor} into~\eqref{eq:tech_expansion_r2}, we obtain
\begin{align}
\lefteqn{m^{-n}\sqrt{2\pi n}\, \sigma L_n(k)}\label{eq:tech_expansion1}\\
&=
W_\infty(0) + \frac 1n \left(a\left(\frac 1 {\sigma^2} W_\infty'(0) - \frac{\kappa_3(0)}{2\sigma^4} W_\infty(0)\right) - \frac {a^2}{2\sigma^2} W_\infty(0) + C\right) +  o\left(\frac 1n\right) \quad \text{a.s.} \notag
\end{align}
Differentiation shows that the $\argmax$ of the quadratic function in the brackets is given by
\begin{equation}\label{eq:a_*}
a_* = \frac{W_\infty'(0)}{W_\infty(0)} - \frac{\kappa_3(0)}{2\sigma^2}.
\end{equation}
However, we have the restriction $a\in \Z - \varphi'(0) n$. Using~\eqref{eq:tech_expansion1} we easily obtain that
\begin{equation}\label{eq:L_n_k_plus_1_L_n_k}
m^{-n}\sqrt{2\pi n}\, \sigma (L_n(k+1) - L_n(k)) = \frac {W_\infty(0)} {\sigma^2 n} \left(a_* - \frac 12 -a \right) + o\left(\frac 1n\right).
\end{equation}
It follows that there is an a.s.\ finite random variable $N_1$ such that for $n>N_1$, the expression in~\eqref{eq:L_n_k_plus_1_L_n_k} is negative for $a > a_*-\frac 12$ and positive for $a<a_*-\frac 12$.
Hence, for $n>N_1$, the mode (computed over the range $|a|<n^{1/4-\eps}$) is equal to one of the numbers
$\lfloor u_n^*\rfloor$ or $\lceil u_n^*\rceil$, where $u_n^* = \varphi'(0)n + a_*$.

However, note that we do \textit{not} claim that the mode is equal to $u_n^*$ rounded to the nearest integer! Indeed, if $u_n^*$ is so close to a half-integer that the first term on the right-hand side of~\eqref{eq:L_n_k_plus_1_L_n_k} is smaller than the error term $o(1/n)$, then we cannot tell whether $\lfloor u_n^*\rfloor$ or $\lceil u_n^* \rceil$ is the mode without considering further terms in the asymptotic expansion.

\vspace*{2mm}
\noindent
\textsc{Step 2.} Inserting into~\eqref{eq:tech_expansion1} any $a=O(1)$ we obtain that
\begin{equation}\label{eq:tech_estimate1}
m^{-n}\sqrt{2\pi n}\, \sigma L_n(k)  = W_\infty(0) + O\left(\frac 1n\right) \quad \text{a.s.}
\end{equation}
Take any $B>0$. In order to complete the proof of Theorem~\ref{theo:mode_height} (a) it suffices to show that 
\begin{equation}\label{eq:mode_not_large}
W_\infty(0) - m^{-n}\sqrt{2\pi n}\, \sigma  L_n(k) > \frac Bn
\end{equation}
for all $n>N_2$ and $|a|\geq n^{1/4-\eps}$.  Here, we denote by $N_2,N_3,\ldots$ a.s.\ finite random variables. The proof of~\eqref{eq:mode_not_large} will be given in Steps 3--5 below.

\vspace*{2mm}
\noindent
\textsc{Step 3.} Let $|a|>\sigma \sqrt n$. Then $|x_n|>1$. By Theorem~\ref{theo:asympt_expansion_BRW} with $r=0$ we have
\begin{equation}\label{eq:tech_expansion_big_a1}
m^{-n}\sqrt{2\pi n}\, \sigma L_n(k)
=
\eee^{-\frac12 x_n^2} W_\infty(0) + o(1) \quad \text{a.s.}
\end{equation}
The $o$-term is uniform, hence
$$
W_\infty(0)-m^{-n}\sqrt{2\pi n}\, \sigma L_n(k) =
W_\infty(0) (1-\eee^{-\frac12 x_n^2})  - o(1)
>
\frac 13 W_\infty(0) > \frac{B}{n}
$$
for $n>N_3$, where we used the inequality $1-\eee^{-\frac 12} > \frac 13$  and the fact that $W_\infty(0)>0$ a.s.

\vspace*{2mm}
\noindent
\textsc{Step 4.} Let $|a|\leq \sigma \sqrt n$ but $|a|>\sigma n^{3/8 + \eps}$ for some $\eps>0$. This means that $|x_n|\leq 1$ but $|x_n| > n^{-1/8 + \eps}$. By Theorem~\ref{theo:asympt_expansion_BRW} with $r=1$ we have
\begin{equation}\label{eq:tech_expansion_big_a2}
m^{-n}\sqrt{2\pi n}\, \sigma L_n(k)
=
\eee^{-\frac12 x_n^2} \left(W_\infty(0) + \frac 1 {\sqrt n} F_1(x_n;0)\right) + o\left(\frac 1{\sqrt n}\right) \quad \text{a.s.}
\end{equation}
Recalling~\eqref{eq:F_1_BRW} and using the condition $|x_n|\leq 1$, we obtain that $F_1(x_n; 0) = O(1)$. Hence
$$
W_\infty(0)-m^{-n}\sqrt{2\pi n}\, \sigma L_n(k)
=
W_\infty(0) (1-\eee^{-\frac12 x_n^2}) -  O\left(\frac 1{\sqrt n}\right)
> \frac {x_n^2}3  W_\infty(0) - O\left(\frac 1{\sqrt n}\right),
$$
where we used the inequality $1- \eee^{-y/2} > y/3$ valid for $y\in [0,1]$.  By the inequality $x_n^2 > n^{-\frac 14 + 2\eps}$, the right-hand side is bigger than $B/n$ for $n>N_4$.

\vspace*{2mm}
\noindent
\textsc{Step 5.} Finally, let $|a| \leq \sigma n^{3/8 + \eps}$ but $|a| > \sigma n^{1/4-2\eps}$. Equivalently, $|x_n|\leq n^{-1/8 + \eps}$ but $|x_n| > n^{-1/4-2\eps}$.
By Theorem~\ref{theo:asympt_expansion_BRW} with $r=2$ we have
\begin{equation}\label{eq:tech_expansion_big_a3}
m^{-n}\sqrt{2\pi n}\, \sigma L_n(k)
=
\eee^{-\frac12 x_n^2} \left(W_\infty(0) + \frac 1 {\sqrt n} F_1(x_n;0) +\frac 1n F_2(x_n;0)\right) + o\left(\frac 1n\right) \quad \text{a.s.}
\end{equation}
The above assumptions on $x_n$ together with~\eqref{eq:F_1_BRW} and~\eqref{eq:F_2_BRW} imply that
$$
\frac{1}{\sqrt n} F_1(x_n;0) =O(n^{-\frac 58 + \eps}),
\quad
\frac{1}{n} F_2(x_n;0) = O\left(\frac 1n\right).
$$
It follows that for sufficiently small $\eps>0$, say $\eps<1/20$,
$$
W_\infty(0)-m^{-n}\sqrt{2\pi n} \sigma L_n(k)
=
W_\infty(0) (1-\eee^{-\frac12 x_n^2}) -  O\left(n^{-\frac 58 + \eps}\right)
> \frac {x_n^2}3  W_\infty(0) - o\left(x_n^2\right),
$$
where we again used the inequality $1- \eee^{-y/2} > y/3$ valid for $y\in [0,1]$. By the inequality $x_n^2 > n^{-1/2 - 4\eps}$, the right-hand side is bigger than $B/n$ for $n>N_5$.
This completes the proof of~\eqref{eq:mode_not_large}.

\vspace*{2mm}
\noindent
\textsc{Step 6.} Finally, we prove Theorem~\ref{theo:mode_height}\,(b). We are interested in the height $M_n=L_n(u_n)$, where $u_n$ is the mode equal either to $\lfloor u_n^*\rfloor$ or to $\lceil u_n^*\rceil$. The idea is to insert both numbers into expansion~\eqref{eq:tech_expansion1}. Using~\eqref{eq:tech_expansion1} with $a=a_*+\theta$,  $|\theta|\leq 1$,   and recalling that $a_*$ is given by~\eqref{eq:a_*}, we obtain
\begin{align*}
\frac{\sqrt{2\pi n}\, \sigma\, L_n(u_n^* + \theta)}{m^{n}}
&=
W_\infty(0) + \frac {W_\infty(0)}{2\sigma^2 n} \left(2(a_*+\theta)a_* - (a_*+\theta)^2\right) + \frac Cn +o\left(\frac 1n\right)\\
&=
W_\infty(0) + \frac {W_\infty(0)}{2\sigma^2 n} \left(a_*^2 -\theta^2\right) + \frac Cn  + o\left(\frac 1n\right) \text{ a.s.}
\end{align*}
Recalling the formula for $C$, see~\eqref{eq:tech_C}, and after some straightforward transformations, we obtain
$$
\frac{\sqrt{2\pi n}\, \sigma\, L_n(u_n^* + \theta)}{W_\infty(0) \, m^{n}}
=
1 - \frac{1}{2\sigma^2 n}
\left(\frac{\kappa_3^2(0)}{6\sigma^4} - \frac{\kappa_4(0)}{4\sigma^2} + \theta^2
+(\log W_\infty)''(0)
\right) + o\left(\frac 1n \right)
\;\;\; \text{a.s.}
$$
Now, the mode $u_n$ is equal to $u_n^*+\theta$ with $\theta$ being either $\lfloor u_n^* \rfloor - u_n^*$ or $\lceil u_n^* \rceil - u_n^*$. It follows that
$$
\frac{\sqrt{2\pi n}\, \sigma\, L_n(u_n)}{W_\infty(0) \, m^{n}}
=
1 - \frac{1}{2\sigma^2 n}
\left(\frac{\kappa_3^2(0)}{6\sigma^4} - \frac{\kappa_4(0)}{4\sigma^2} + \theta_n^2
+(\log W_\infty)''(0)
\right) + o\left(\frac 1n \right)
\;\;\; \text{a.s.},
$$
where $\theta_n$ is the smaller one of the numbers $u_n^* - \lfloor u_n^* \rfloor$ or $\lceil u_n^* \rceil - u_n^*$.

\subsection*{Acknowledgement} The authors are grateful to Henning Sulzbach and Alexander Marynych for useful discussions and to the unknown referee for comments which improved the exposition.

\bibliographystyle{plainnat}
\bibliography{edgeworth_brw_rev}

\end{document}